\newcommand{\nequation}{\setcounter{equation}{0}}
\newtheorem{lemma}{Lemma}
\newtheorem{theorem}{Theorem}
\newtheorem{remark}{Remark}
\def\eqn {\begin{equation}}
\def\eeqn {\end{equation}}
\newcommand{\e}{\varepsilon}
\def\real{{\mathbb R}}
\def\R{\real}
\def\ep{\varepsilon}
\def\lb{\lambda}
\def\al{\alpha}
\def\pa{\partial}
\def\C{\mathcal C}
\def\G{\mathcal G}
\def\K{\mathcal K}
\def\S{\mathcal S}
\newcommand{\X}{\mathbb X}
\newcommand{\Om}{\Omega}
\newcommand{\la}{\lambda}
\newcommand{\be}{\begin{equation}}
\newcommand{\ee}{\end{equation}}
\newcommand{\mcs}{{\mathcal S}}
\newcommand{\mca}{{\mathcal A}}
\newcommand{\mcb}{{\mathcal B}}
\newcommand{\mcg}{{\mathcal G}}
\newcommand{\mcl}{{\mathcal L}}
\newcommand{\mcr}{\mathcal{R}}
\newcommand{\mcx}{X}
\newcommand{\mcy}{Y}
\newcommand{\mck}{{\mathcal K}}
\newcommand{\bdc}{\mathbb{C}}
\newcommand{\bdr}{\mathbb{R}}
\newcommand{\bdz}{\mathbb{Z}}
\newcommand{\mcc}{\mathcal{C}}
\newcommand{\N}{\mathbb N}
\newcommand{\bese}{\begin{subequations}}
\newcommand{\ese}{\end{subequations}}
\newcommand{\non}{\nonumber}
\begin{document}
\title[Global bifurcation of steady gravity water waves with critical layers]{Global bifurcation of steady gravity water waves\\ with critical layers}
\author{Adrian Constantin, Walter Strauss and Eugen Varvaruca}

\address{King's College London, Department of Mathematics, Strand, WC2L 2RS, London, United Kingdom and
Faculty of Mathematics, University of Vienna, Oskar-Morgenstern-Platz 1, 1090 Vienna, Austria}
\email{adrian.constantin@kcl.ac.uk; adrian.constantin@univie.ac.at}
\address{Brown University, Department of Mathematics and Lefschetz Center for
Dynamical Systems, Box 1917, Providence, RI 02912, USA}
\email{wstrauss@math.brown.edu}%
\address{University of Reading, Department of Mathematics and Statistics, Whiteknights, PO Box 220, Reading RG6 6AX, United Kingdom}
\email{e.varvaruca@reading.ac.uk}%

\begin{abstract}
We construct large families of two-dimensional travelling
water waves propagating under the influence of gravity in a flow of constant vorticity over a
flat bed. A Riemann-Hilbert problem approach is used to recast the governing equations
as a one-dimensional elliptic pseudo-differential equation with a scalar constraint. The structural
properties of this formulation, which arises as the Euler-Lagrange equation of an energy functional,
enable us to develop a theory of analytic global bifurcation.
\end{abstract}

\maketitle

\tableofcontents

\section{Introduction}

We construct large families of two-dimensional periodic steady water waves propagating
under the influence of gravity at the surface of a layer of an incompressible, inviscid and homogeneous fluid with a flat bed.
In contrast to most previous mathematical studies,
our waves can have internal stagnation points and critical layers.
They can also have overturning profiles, that is, profiles that are not graphs.
Such phenomena cannot occur for the much-studied irrotational flows.
For an irrotational steady flow the wave profile is necessarily the graph
of a single-valued function \cite{Sp,T,V0,V2}, and there are no interior stagnation points or
critical layers.

In this paper we construct families of water waves on flows with non-vanishing constant vorticity.
Vorticity is the hallmark of underlying non-uniform currents. Flows with constant vorticity
are of interest because of their analytic tractability, being representative of a wide  range of physical scenarios.
For example,
if the waves are long compared with the average water depth, then the existence of a non-zero mean vorticity
is more important than its specific distribution \cite{DP}.  In particular, constant vorticity gives a good description of tidal currents; that is, the alternating, horizontal movement of water associated with the rise and fall of the tide, with positive/negative vorticity being appropriate
for the ebb and flood, respectively \cite{Cb}. On areas of the continental
shelf and in many coastal inlets these are the most significant currents;
the fact that they are the most regular and predictable currents adds to their appeal.
The presence of an underlying non-uniform current gives rise to new dynamically rich flow phenomena.
In particular, critical layers have been observed in many
numerical studies of water flows with an underlying current of constant vorticity, a typical
feature of the flow dynamics being the `cat's eye' flow pattern of Thomson (Lord Kelvin) \cite{Ke}.
The analysis pursued for instance in \cite{C, CSp} shows that no such patterns
can occur in the flow beneath an irrotational periodic steady water wave. General overviews of the wave-current interaction
theory can be found in \cite{Cb, Str} and of the numerics in \cite{DP, KS1, KS2, OS, V}.
In the present setting, positive vorticity $\Upsilon>0$ characterizes pure
currents $(\Upsilon Y+U_0,\,0)$ whose
surface horizontal velocity exceeds the horizontal current velocity $U_0$ on the bed $Y=0$. This is appropriate for the ebb current,
while $\Upsilon<0$ captures flood currents \cite{Cb}. Note that tides refer to the vertical motion of water
caused by the gravitational forces due to the relative motions of Moon,
Sun and Earth, whereas the tidal current flood/ebb is the horizontal movement of water
(mostly one-directional) associated with the rise and fall of the tide, respectively. A spectacular example of tidal
currents effects on sea waves is encountered at the Columbia River entrance, where appreciable tidal currents make
it one of the most hazardous navigational regions in the world because the wave height
can easily double in just a few hours cf. \cite{Jon}.

The waves investigated in the present paper
are constructed by means of bifurcation from laminar (flat) flows.
Construction of small-amplitude waves via local bifurcation has already been
accomplished in \cite{W, CV}.
What is novel here is the construction of waves of {\it large amplitude}
via global bifurcation. Now, global bifurcation has also been carried out for general vorticity under the limitation
that there are no stagnation points or critical layers \cite{CS}, that is,
under the condition that $\psi_Y<0$, where the vector $(\psi_Y,-\psi_X)$ is the
fluid velocity in the frame moving with the speed of the traveling wave.
The reason for this limitation in past studies is because they rely upon
the semi-Lagrangian transformation of Dubreil-Jacotin \cite{DJ}, which requires $\psi_Y<0$.
The critical layers are comprised of the points
where $\psi_Y=0$, while the stagnation points are those at which both $\psi_Y=0$
and $\psi_X=0$. On the other hand, a vertical scaling transformation was used in \cite{W} to construct
waves of small amplitude with critical layers. However, the intricate nature of the reformulated free-boundary problem
precludes an attempt to develop an effective global bifurcation approach from this perspective. While for
waves of small amplitude, the vertical scaling transformation induces tractable structural changes that can be captured
by linear theory, as soon as the waves cease to represent small perturbations of a flat free surface, the inherent loss of
structure cannot accommodate the pursuit of a physically tangible analysis. Two of the present authors developed in \cite{CV} a
new conformal transformation to make a different construction of waves with critical layers.
This transformation is further developed in the present  paper to construct waves of {\it large amplitude}.
We also prove that they are critical points of a natural functional derived from the energy.

We begin  Section 2 by introducing the governing equations of water waves.
Then, as presented in \cite{CV}, we use a conformal mapping from a fixed infinite strip in the plane
onto the fluid domain with its free boundary (see Figure 1). For the sake of completeness, some of the technical
background for the Hilbert transform on a strip is provided in Appendix A. By means of Riemann-Hilbert theory on the strip,
the problem is further transformed to an elliptic pseudo-differential equation (\ref{vara}) on a
horizontal line, where the unknown is the surface profile expressed as a function $v(x)$
of a conveniently transformed version of the horizontal coordinate.
The pseudo-differential equation is also coupled to a scalar equation (\ref{meana}) for the mass flux $m$.
As distinguished from the formulation in \cite{CV}, this new formulation is crucial to our global
bifurcation analysis in the subsequent sections. The equations appear rather technical as
they involve the Hilbert transform for a strip and several parameters.
Nevertheless, we show that the equations in the new formulation are precisely
the Euler-Lagrange equations of a certain functional $\Lambda$.
As explained in Appendix B, the functional $\Lambda$ comes naturally from the physical energy
$E  = \displaystyle\iint \left\{\frac12 |\nabla\psi|^2 +\Upsilon\psi + \frac Q2 - gY \right\} dYdX$,
where $\Upsilon$ is the vorticity ($\Upsilon = -\gamma$ in some references), ${Q}/{2g}$ is the total head,
which is the greatest possible height of
the wave, as discussed further at the beginning of Section 2, and $g$ is gravity.
The formulation is new even for irrotational flows of finite depth.
It could be regarded as an analogue of Babenko's formulation \cite{Ba} for the irrotational
water-wave problem of infinite depth.

In Section 3 we construct for every integer $n \ge 1$ and both choices of sign $\pm$ a
continuous solution curve $\K_{n,\pm}$.
The solution curve $\K_{n,\pm}$ consists of triples $(Q,m,v)$ belonging to a
function space $\real\times\real\times C_{per}^{2,\alpha}(\real)$, where $0<\alpha<1$ is the H\"older space index.
Each point on the curve $\K_{n,\pm}$ corresponds to a water wave, with
$n=1$ standing for the bifurcation from the lowest eigenvalue and $n \ge 2$ for the higher modes.
While local bifurcation from a simple eigenvalue is a well-known technique,
most often referred to by the names Crandall-Rabinowitz or Liapunov-Schmidt, we go beyond local bifurcation
and prove the existence of a global solution curve.
We thus obtain the existence of solutions that are not merely
small perturbations of a laminar flow (with a flat free surface).
We show that each curve $\K_{n,\pm}$ has locally a real-analytic parametrization,
by employing the ``analytic theory of bifurcation" due to Dancer, Buffoni and
Toland \cite{BT}.  The basic idea of continuation of the local bifurcation curve originates
with the work of Rabinowitz \cite{Ra}, who used Leray-Schauder
degree to extend the local bifurcation curve.  Shortly thereafter Dancer \cite{D} showed that
analytic continuation can provide a similar construction.
The two approaches provide somewhat different sets of solutions.
The main advantage of the analytic theory is that it provides a continuous curve of solutions,
whereas the degree-theoretic approach
only ensures the existence of a global continuum of solutions that might lose its character as a curve
beyond a small neighbourhood of the local bifurcation point.
The constructed solution
curve has a locally analytic parametrization around each point, even as it passes
through its branch points. Theorem \ref{glbp} provides three alternatives for how the curve ``ends" in either direction.
One is that the curve is unbounded in $\real\times\real\times C_{per}^{2,\alpha}(\real)$.
Another is that the solution curve $\K_{n,\pm}$ approaches a wave of greatest height $Q/2g$.
The third is that the curve is a closed loop that returns to the original laminar flow.

Section 4 is a key part of the analysis.  We eliminate the third
alternative and replace it by the alternative that the curve of solutions contains a wave for which the surface $\S$
intersects itself at a point directly above the trough.  This possibility
is supported by numerical computations \cite{DP, V}.  Although the surface
might overturn (that is, $Y$ could be a multivalued function of $X$), we
prove that $Y$ decreases from the crest to the trough.  We prove in
particular that a `loop' could not occur but that a limit of such
monotone waves might have a surface that intersects itself.  Moreover,
the self-intersection could happen only on the vertical line above the
trough.  These statements are proven by an intricate series of arguments
based on a maximum principle that keeps track of the possible nodes of $Y$.

Section 5 highlights some more detailed features
of the small-amplitude waves that illustrate the profound effect of
vorticity.  In particular, there exist flows of which the streamlines are
in the shape of ``cat's eyes".  In Section 6 we mention some further
results that will appear in a forthcoming paper (most of these results were already
announced in \cite{SS}), as well as some
conjectures that gained some reasonable degree of credence due to
some partial analytical results and available numerical simulations.

\section{The free-boundary problem}

The problem of periodic travelling gravity water waves in a flow of constant
vorticity $\Upsilon$ over a flat bed can be formulated as the free-boundary problem of finding

\begin{itemize}
\begin{subequations}\label{fs}
\item A laterally unbounded domain $\Omega$ in $(X,Y)$-plane, whose boundary consists of the real axis
\begin{equation}{\mathcal B}:=\{(X,0):\ X \in \R\}\end{equation}
representing the flat impermeable water bed, and an a priori unknown curve with a
parametric representation
\be{\mathcal S}:=\{(u(s),v(s)):\ s \in \R\}\label{fs1}\ee
where
\be\text{the map $s \mapsto (u(s)-s, v(s))$ is periodic of period $L$},\label{fs2}\ee representing
the water's free surface.
\end{subequations}
\item A function $\psi(X,Y)$ that is $L$-periodic in $X$ throughout $\Omega$, representing the stream
function giving the velocity field $(\psi_Y,-\psi_X)$ in a frame
moving at the constant wave speed, which satisfies the following
equations and boundary conditions:
\begin{subequations}\label{g}
\begin{align}
& \Delta\psi=\Upsilon\quad\hbox{in}\quad \Omega\,,\label{g1}\\
& \psi=0\quad \hbox{on}\quad {\mathcal S}\,,\label{g3}\\[0.1cm]
&  \psi=-m\quad \hbox{on}\quad {\mathcal B}\,,\label{g4}\\[0.1cm]
& \vert\nabla\psi\vert^{2}+2gY=Q \quad\hbox{on}\quad {\mathcal S}\,.\label{g2}
\end{align}
\end{subequations}
\end{itemize}
Here $g$ is the gravitational constant of acceleration and the constant $m$ is the relative mass flux.
The constant $Q/2g$ is  the total head, which is the greatest possible height of $\S$.
The notion of relative mass flux captures the fact that
in the moving frame the amount of water passing any vertical line is constant
throughout the fluid domain, since
$$\int_0^{v(s)} \psi_Y(X,Y)\,dY = m\,,\qquad s \in \R\,.$$
Neglecting friction and surface tension effects,
the Bernoulli equation (\ref{g2}) is a form of conservation of energy; cf. the discussion in \cite{Cb}.
Note that all the terms in
${\vert\nabla\psi\vert^{2}}/{2g}+Y  =  {Q}/{2g}$ on $\S$
have the dimension of length, the first being called the velocity head and representing the
elevation needed for the fluid to reach the velocity $\vert\nabla\psi\vert$ during frictionless
free fall, and the second term being the elevation head.
The level sets of $\psi$ are the streamlines.  A point where the gradient
of $\psi$ vanishes is called a {\it stagnation point}. A {\it critical layer} is a
curve along which $\psi_Y=0$. Such curves arise in the context of flow-reversal,
when a fluid region where the flow is oriented towards the propagation direction of
the wave is adjacent to a fluid region in which the flow is adverse to it.

\begin{figure} \centering  \includegraphics[width=16cm]{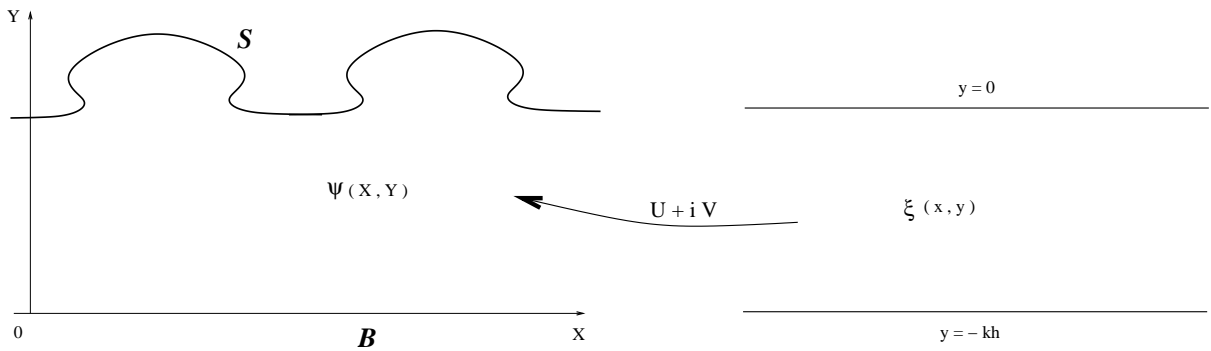}
\caption{\small The conformal parametrization of the fluid domain.}  \end{figure}

For an integer $p \ge 0$ and $\alpha\in (0,1)$, we denote by
$C^{p,\alpha}$ the space of functions whose partial
derivatives up to order $p$ are H\"{o}lder continuous with exponent
$\alpha$ over their domain of definition. By
$C^{p,\al}_{\textnormal{loc}}$ we denote the set of functions of
class $C^{p,\alpha}$ over any compact subset of their domain of
definition. Let $C^{p,\alpha}_L({\mathbb R})$ be the space of functions of
one real variable which are $L$-periodic and of class
$C^{p,\alpha}$. Throughout this paper we are interested in solutions $(\Om,\psi)$ of
the water-wave problem (\ref{g}) of class $C^{1,\alpha}$ for some
$\alpha\in (0,1)$, in the sense that $\mcs$ has a
parametrization (\ref{fs1}) with $u,\, v$ functions of class
$C^{1,\alpha}$, such that (\ref{fs2}) holds and
\begin{equation}\label{rc}
u'(s)^2+v'(s)^2\neq 0\quad\text{for all }s\in\bdr,
\end{equation}
while $\psi\in C^\infty(\Om)\cap C^{1,\alpha}(\overline\Om)$.

\subsection{The first reformulation via conformal mapping}

In this subsection we present, following \cite{CV}, the reformulation of the free-boundary
problem (\ref{g}) as the quasilinear pseudodifferential equation
(\ref{maineq}) for a periodic function of a single variable. This
involves the periodic Dirichlet-Neumann operator $\G_{kh}$ and the periodic
Hilbert transform $\C_{kh}$ for a strip, for whose definition and detailed properties we refer to Appendix A.

For any $d>0$, we denote by $\mcr_{d}$ the horizontal strip $\{(x,y) \in \R^2:\ -d < y < 0\}.$
We consider functions of period $L=2\pi/k$ where $k$ is the wave number.
Given an $L$-periodic strip-like domain $\Omega$ of class $C^{1,\alpha}$ for some $\alpha \in (0,1)$, having
the real axis as its lower boundary, the
considerations in \cite{CV} show that there exists a unique constant $h>0$,
called the {\it conformal mean depth} of $\Omega$, and a conformal map $U+iV$ from
$\mcr_{kh}$ onto $\Omega$ with the following properties.
It admits an extension as a homeomorphism of class $C^{1,\alpha}$
between the closures of these domains, such that
\begin{equation}\label{v}
\left\{\begin{array}{l}
U(x+2\pi,y)=U(x,y)+\displaystyle\frac{2\pi}{k}\quad\hbox{for}\quad (x,y) \in \mcr_{kh},\\[0.2cm]
V(x+2\pi,y)=V(x,y)\quad\hbox{for}\quad (x,y) \in \mcr_{kh},\\[0.2cm]
V(x,-kh)=0\quad\hbox{for all}\quad x \in \R
\end{array}\right.
\end{equation}
See Figure 1. Moreover, $U,\,V \in C^{1,\alpha}(\overline{\mcr_{kh}})$ and
\be U_x^2(x,0)+ V_x^2(x,0)\neq
0\qquad\text{for all }x\in\bdr,\label{mapbd}
\ee
\be \text{the mapping $x\mapsto (U(x,0),\,V(x,0))$ is injective on $\bdr$},\label{inj}\ee
\be \mcs=\{(U(x,0),\,V(x,0)):x\in\bdr\}\quad\hbox{is the upper boundary of}\ \Omega\,.\label{S}\ee
We define
\begin{equation}\label{vV}
v(x)=V(x,0)\quad\text{for all }x\in\R \quad \text{and } h=[v],
\end{equation}
where throughout this paper $[f]$ denotes the mean over one period of a $2\pi$-periodic function $f$. Note that $V$ may be recovered uniquely from $v$  as the solution of
\begin{subequations}\label{VV}
\begin{align}
&\Delta V =0\quad\hbox{in}\quad \mcr_{kh},\\[0.1cm]
&V(x,0)=v(x),\qquad x \in \R,\\[0.1cm]
&V(x,-kh)=0,\qquad x \in \R\,.
\end{align}
\end{subequations}
Of course, $U$ is a harmonic conjugate of $-V$ on $\mcr_{kh}$ and, moreover, by (\ref{H}) with $d=kh$, we have that, up to an additive constant that can be neglected,
\begin{equation}\label{U}
U(x,0)=\frac{x}{k}+ \Big(\mcc_{kh}(v-h)\Big)(x),\quad x \in \bdr\,.
\end{equation}
Thus $U_x(x,0)=V_y(x,0)=\frac1k + \C_{kh}(v')$.

Let $\Omega$ be any $L$-periodic strip-like domain of class $C^{1,\alpha}$, with
the real axis as its lower boundary. Classical elliptic theory ensures that (\ref{g1})--(\ref{g4})
has a unique solution. Thus, one may regard
 (\ref{fs})--(\ref{g}) as the problem of finding a domain $\Omega$ for which the corresponding solution  of (\ref{g1})--(\ref{g4})
satisfies also (\ref{g2}). In what follows, we aim to recast this problem in terms of the function $v$ of
one variable defined above. If  $\psi$ is the unique solution of (\ref{g1})--(\ref{g4}),
we define $\xi:\mcr_{kh}\to \R$ by
\begin{equation}\label{x}
\xi(x,y)=\psi(U(x,y),V(x,y)),\qquad (x,y)\in\mcr_{kh}\,,
\end{equation}
and $\zeta:\mcr_{kh}\to \R$ by
\begin{equation}\label{z}
\zeta=\xi+\,m-\,\frac{\Upsilon}{2}\,V^2\,.
\end{equation}
Then we may recast (\ref{g1})--(\ref{g4}) as
\begin{subequations}\label{gc}
\begin{align}
& \Delta\zeta=0\quad\hbox{in}\quad\mcr_{kh},\label{gc1}\\[0.1cm]
& \zeta(x,0)=m\,-\,\displaystyle\frac{\Upsilon}{2}\,v^2(x)\quad\hbox{for all}\quad x \in \R,\label{gc3}\\[0.2cm]
& \zeta(x,-kh)=0\quad\hbox{for all}\quad x \in \R.\label{gc4}
 \end{align}
\end{subequations}
Moreover, by calculating $|\nabla\xi|^2 = (Q-2gY)|\nabla V|^2$, we see that
(\ref{g2}) is satisfied if and only if
\begin{equation}
(\zeta_y\,+\,\Upsilon VV_y)^2=(Q-2gV)(V_x^2+V_y^2)\quad\hbox{at}\quad (x,0) \quad\hbox{for all}\quad x \in \R.\label{gc2}
\end{equation}
					From (\ref{gc})
and the definition (\ref{G}) of the Dirichlet-Neumann operator $\mcg_{kh}$ we infer that
\begin{equation}\label{zy}
\zeta_y=\displaystyle\frac{m}{kh}\,-\,\frac{\Upsilon}{2} \,\mcg_{kh}(v^2)\quad\hbox{on}\quad y=0,
\end{equation}
while clearly
\begin{equation}\label{vy}
V_y=\mcg_{kh}(v)\quad\hbox{on}\quad y=0\,.
\end{equation}
Therefore (\ref{gc2}) may now be rewritten as
\be
\left\{\frac{m}{kh} -\Upsilon \Big(
\mathcal{G}_{kh}( v^2/2)- v \mathcal{G}_{kh}(v)\Big)\right\}^2=
(Q-2gv)\,\Big(v'^2 + \mcg_{kh}(v)^2\Big),\label{ffor}\ee
		where $[v]=h$.
Equation (\ref{ffor}) was first derived in \cite{CV},  $\Upsilon=-\gamma$, due to a
change of sign  of the two-dimensional vorticity.

The above considerations show that the free-boundary problem
(\ref{g}) leads to the problem of finding a positive
number $h$ and a function $v\in C^{1,\alpha}_{2\pi}({\mathbb R})$ which satisfy
\begin{subequations}\label{maineq}
\begin{align}&\left\{\frac{m}{kh} -\Upsilon \Big(
\mathcal{G}_{kh}( v^2/2)- v \mathcal{G}_{kh}(v)\Big)\right\}^2=
(Q-2gv)\,\Big(v'^2 + \mcg_{kh}(v)^2\Big),\label{m0}\\
& [v]=h,\label{m1}\\
&v(x)>0\quad\text{for all }x\in\bdr,\label{pos}\\
&\text{the mapping $x\mapsto \left(\frac{x}{k}+\mcc_{kh}(v-h)(x), v(x)\right)$ is
injective on $\bdr$},\label{m2}\\
& v'(x)^2+\mcg_{kh}(v)(x)^2\neq 0\quad\text{for all }x\in\bdr\,.\label{m3}
\end{align}
\end{subequations}
						Indeed, (\ref{m0}) is precisely (\ref{ffor}).
The condition (\ref{m1}) is just the definition of $h$, and
(\ref{pos}) is ensured by the assumption that $\mcs$ lies in the upper half-plane. As for (\ref{m2}) and (\ref{m3}), they
are obtained from (\ref{inj}) and (\ref{mapbd}), respectively, in view of (\ref{vy}), (\ref{vV}), (\ref{U}),
and the Cauchy-Riemann equations for the analytic function $U+iV$. Note also that the relations (\ref{zy})-(\ref{vy}), in combination with (\ref{CG}), may be written as
\begin{equation}\label{vzc}
\left\{\begin{array}{l}
V_y = \displaystyle\frac{1}{k}+\mcc_{kh}(v'),\\[0.3cm]
\zeta_y = \displaystyle\frac{m}{kh}\,-\, \displaystyle\frac{\Upsilon}{2kh}\,[v^2]\,-\,\Upsilon\,\mcc_{kh}(vv'),
\end{array}\right. \quad\hbox{on}\quad y=0\,,
\end{equation}
since $[v]=h$.  Thus (\ref{m0}) may be equivalently rewritten as
\begin{equation}\label{add}
\Big\{ \frac{m}{kh} - \frac{\Upsilon}{2kh}[v^2]-\Upsilon\mcc_{kh}(vv') +\Upsilon v\,\Big(\frac{1}{k}+\mcc_{kh}(v')\Big)\Big\}^2
\, -\,(Q-2gv)\,\Big\{(v')^2
+\Big(\frac{1}{k}+\mcc_{kh}(v')\Big)^2\Big\}=0.\end{equation}

Conversely, given a positive number $h$ and a function
$v\in C^{1,\alpha}_{2\pi}$ satisfying (\ref{maineq}) with $k=2\pi/L$, one can construct a solution of
(\ref{g}) by reversing the process which led from (\ref{g}) to (\ref{maineq}). Indeed,
let $V$ be the unique solution of (\ref{VV}). If $U:\mcr_{kh}\to\bdr$ is a harmonic function such that $U+iV$ is
holomorphic, then $U,\, V\in
C^{1,\alpha}(\overline{\mcr_{kh}})$ cf. \cite{CV}. Condition (\ref{m1}) ensures
that the first two relations in (\ref{v}) hold. In combination with (\ref{m2}) and (\ref{pos}),
we infer that the curve $\mcs$ defined by (\ref{S}) is non-self-intersecting and contained in the upper half-plane.
The map $U+iV$ is a conformal mapping from $\mcr_{kh}$ onto an $L$-periodic strip-like domain $\Om$
of conformal depth $h$. Moreover, cf. \cite{CV}, this map admits an extension as a homeomorphism
between the closures of these domains, with $\{(x,0):x\in\bdr\}$ being mapped onto $\mcs$
and $\{(x,-kh):x\in\bdr\}$ being mapped onto $\mcb$. Due to (\ref{m3}), $\mcs$ is
a $C^{1,\alpha}$ curve. If $\zeta$ is the unique solution
of (\ref{gc}), then $\zeta\in C^{1,\alpha}(\overline{\mcr_{kh}})\cap C^\infty(\mcr_{kh})$.
Defining $\xi$ by (\ref{z}), and then $\psi$ by (\ref{x}), we see that
$\psi\in C^{1,\alpha}(\overline\Om)\cap C^\infty(\Om)$
satisfies (\ref{g1})-(\ref{g4}). Finally, since (\ref{m0}) holds, we obtain that $\psi$ satisfies (\ref{g2}).

Note that the  velocity  at the location
$(X,Y)=\Big(U(x,y),\,V(x,y)\Big) \in \Omega$, where $(x,y) \in
\mcr_{kh}$, is given by
\be\label{vel}
(\psi_Y,\,-\,\psi_X)=\Big(\frac{V_x\zeta_x+V_y\zeta_y}{V_x^2+V_y^2}\,
+\,\Upsilon V,\, \frac{V_x\zeta_y-V_y\zeta_x}{V_x^2+V_y^2}\Big)
\ee
in terms of $\zeta(x,y)$ and of the conformal map $U+iV$ from $\mcr_{kh}$ to $\Omega$.
The formula (\ref{vel}) is obtained by differentiating
(\ref{x}), solving the resulting linear system for $\psi_X$ and
$\psi_Y$ and taking (\ref{z}) into account.

\subsection{The new reformulation of Riemann-Hilbert type}

In the theory of irrotational waves of infinite depth, it is known \cite{ST} that the equation analogous to (\ref{add}) admits an equivalent reformulation, via Riemann-Hilbert theory.
We now derive a new reformulation in the present setting as well, although the argument is more intricate
and the new equation is coupled to a scalar constraint.
The new formulation is  the system
\begin{subequations}\label{sys}
\begin{align}\label{vara}
&\displaystyle\mcc_{kh}\Big( (Q-2gv-\Upsilon^2 v^2)\,v'\Big) +
(Q-2gv-\Upsilon^2 v^2)\,\Big( \displaystyle\frac{1}{k} +
\mcc_{kh}(v')\Big)\\
&\qquad\qquad
- \displaystyle\,2\Upsilon\,v \,\Big( \frac{m}{kh} - \frac{\Upsilon}{2kh}\,[v^2] - \Upsilon\,\mcc_{kh}(vv')\Big) -
\frac{Q-2\Upsilon m -2gh}{k} + 2g\,[v\,\mcc_{kh}(v')]=0\,,\nonumber
\end{align}
\begin{align}\label{meana}
&\displaystyle\left[\Big\{ \frac{m}{kh} - \frac{\Upsilon}{2kh}[v^2]-\Upsilon\mcc_{kh}(vv') +\Upsilon v\,\Big(\frac{1}{k}+\mcc_{kh}(v')\Big)\Big\}^2\right] \qquad\qquad\qquad\qquad\qquad\\
&\qquad\qquad -\displaystyle\left[(Q-2gv)\,\Big\{(v')^2
+\Big(\frac{1}{k}+\mcc_{kh}(v')\Big)^2\Big\}\right]=0\,.\nonumber
\end{align}
\end{subequations}
for unknowns $(m,Q, v)\in\R\times\R\times C^{1,\alpha}_{2\pi}(\R)$,
Recall that $m \in \bdr$ is the relative mass flux,
$h=[v]>0$ is the conformal mean depth of
the fluid domain $\Omega$, $k>0$ is the wave number corresponding to the wave
period $L=2\pi/k$, and $Q$ is the total head.
The square bracket $[\ \ ]$ means the average over a period, so \eqref{meana} is a scalar equation.

\begin{theorem}[Equivalence of Formulations]      \label{equiv}
Let $(m,Q, v) \in \R\times\R\times C^{1,\alpha}_{2\pi}(\R)$, with $[v]=h$.

(i) If (\ref{sys}) holds, then (\ref{add}) holds.

(ii) If (\ref{add}) holds and, in addition,
\be V_x^2+V_y^2\neq 0\quad\text{in }\overline\mcr_{kh},\label{noz}\ee
where $V$ is the solution of (\ref{VV}),  then (\ref{sys}) holds.
\end{theorem}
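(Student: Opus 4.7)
Let $z=x+iy$ in $\mcr_{kh}$, and write $\omega:=V_z=(V_x-iV_y)/2$ and $\xi:=\zeta-m+\Upsilon V^2/2$. Since $\zeta$ and $V$ are harmonic, $\zeta_z$ and $\omega$ are holomorphic in $\mcr_{kh}$. Under (\ref{noz}), $\omega\neq 0$ on $\overline{\mcr_{kh}}$, so that
$$H(z):=\frac{\zeta_z(z)}{\omega(z)}$$
is holomorphic in $\mcr_{kh}$. Since $\zeta=V=0$ on $y=-kh$ forces $\zeta_x=V_x=0$ there, both $\zeta_z|_{y=-kh}$ and $\omega|_{y=-kh}$ are purely imaginary, whence $H$ takes \emph{real} values on $y=-kh$. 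Using $\xi_x=0$ on $y=0$ (differentiate $\zeta|_{y=0}=m-\Upsilon v^2/2$) together with (\ref{vzc}), a direct computation gives
$$\Re H\big|_{y=0}=\frac{\zeta_yV_y-\Upsilon v(v')^2}{|\nabla V|^2}\,,\qquad \Im H\big|_{y=0}=-\frac{\xi_y\,v'}{|\nabla V|^2}\,,$$
and one checks that (\ref{add}) is equivalent to the nonlinear modulus identity $|H+\Upsilon v|^2=Q-2gv$ on $y=0$.

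\noindent\textbf{Part (ii).} Assume (\ref{add}) and (\ref{noz}). Averaging (\ref{add}) over one period yields (\ref{meana}) immediately. For (\ref{vara}), invoke the Riemann--Hilbert identity on the strip from Appendix~A,
$$\Re H\big|_{y=0}-\big[\Re H\big|_{y=0}\big]=\mcc_{kh}\big(\Im H\big|_{y=0}\big)\,,$$
valid for any $H$ holomorphic in $\mcr_{kh}$, periodic in $x$, and real on $y=-kh$. Combine this linear identity with the pointwise modulus equation $|H+\Upsilon v|^2=Q-2gv$; clear the common denominator $|\nabla V|^2=(v')^2+V_y^2$, and substitute (\ref{vzc}) to eliminate $V_y$ and $\zeta_y$ on $y=0$. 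A routine but somewhat lengthy algebraic rearrangement then collapses the result to (\ref{vara}).

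\noindent\textbf{Part (i).} Conversely, assume (\ref{vara}) and (\ref{meana}). Reversing the algebra of part (ii): the equation (\ref{vara}) is precisely the Riemann--Hilbert identity for $H$ written out in terms of $v$, $V_y$, $\zeta_y$ after clearing $|\nabla V|^2$, while (\ref{meana}) supplies the averaged (scalar) normalization that the Hilbert transform alone cannot see (recall $\mcc_{kh}(1)=0$). These two pieces of information recombine to yield $|H+\Upsilon v|^2=Q-2gv$ pointwise on $y=0$, which is (\ref{add}). Here no non-degeneracy is required, because the argument can be phrased purely as a pseudodifferential identity between functions on $\bdr$.

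\noindent\textbf{Main obstacle.} The principal technical difficulty is the algebraic clearing of the factor $|\nabla V|^2$ in the denominators of $\Re H$ and $\Im H$: one must match the \emph{nonlinear} modulus identity with the \emph{linear} Riemann--Hilbert identity so that the final equation (\ref{vara}) contains neither $|\nabla V|^{-2}$ nor implicit $\omega^{-1}$ factors. The non-degeneracy (\ref{noz}) is essential in part (ii) to guarantee that $H=\zeta_z/\omega$ is holomorphic on the whole closed strip $\overline{\mcr_{kh}}$, and not merely a formal boundary expression, which is what legitimizes the use of the strip Riemann--Hilbert identity.
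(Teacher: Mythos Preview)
Your quotient $H=\zeta_z/V_z$ does give the clean reformulation $|H+\Upsilon v|^2=Q-2gv$ of (\ref{add}), but the bridge you propose to (\ref{vara}) does not work. The Riemann--Hilbert identity $\Re H-[\Re H]=\mcc_{kh}(\Im H)$ is \emph{tautological} under (\ref{noz}): it holds for every such $v$ regardless of whether (\ref{add}) or (\ref{vara}) is satisfied, so by itself it carries no information toward (\ref{vara}). And you cannot ``clear the denominator $|\nabla V|^2$'' through $\mcc_{kh}$, since $\mcc_{kh}$ does not commute with multiplication; multiplying the identity by $|\nabla V|^2$ leaves a term $|\nabla V|^2\,\mcc_{kh}(\Im H)$ that does not reduce to anything appearing in (\ref{vara}). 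The paper instead recognizes (\ref{vara}) as the Riemann--Hilbert statement for a \emph{different} boundary function, namely $G:=(Q-2gv-\Upsilon^2v^2)(V_y-iV_x)-2\Upsilon v\,\zeta_y$, which is polynomial in $v,V_x,V_y,\zeta_y$ and has no denominators. The equivalence ``(\ref{vara}) $\Leftrightarrow G\in A^{0,\alpha}_{kh}$'' is then a direct computation.

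This distinction matters most for Part (i), which cannot be obtained by ``reversing'' Part (ii): without (\ref{noz}) your $H$ is undefined, and your remark that ``the argument can be phrased purely as a pseudodifferential identity'' does not supply one. The paper's mechanism is: from $G\in A$ one \emph{multiplies} by $F=V_y+iV_x\in A$ (never divides), then subtracts the automatic element $\Phi^2=(\zeta_y+i\zeta_x)^2\in A$; using only $\zeta_x=-\Upsilon vV_x$ one checks that $GF-\Phi^2$ is a \emph{real} function on $y=0$, hence (being in $A$) a constant, and (\ref{meana}) pins that constant to zero. Unwinding, $GF=\Phi^2$ is exactly $(Q-2gv)(V_x^2+V_y^2)=(\zeta_y+\Upsilon vV_y)^2$. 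This ``multiply by $F$ and reduce to a real constant'' step is the missing idea in your outline; for Part (ii) it is run in reverse, dividing $GF\in A$ by $F$, which is legitimate precisely under (\ref{noz}).
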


\begin{proof}[Proof of Theorem \ref{equiv}]
It is convenient to begin by introducing the following general notation.
A $2\pi$-periodic function $z+iw:\R\to\bdc$ is said to {\it belong to the class} ${A}^{p,\alpha}_{d}$
for some integer $p\geq 0$, $\alpha\in (0,1)$ and $d>0$,
if $[w]=0$ and there exists a analytic function $Z+iW: \mcr_d\to \bdc$ such that $Z,W\in C^{p,\alpha}_{2\pi}(\overline{\mcr_d})$, $W$ satisfies $(\ref{BC})$, and $Z(x,0)=z(x)$ for all $x\in \R$.
Then ${A}^{p,\alpha}_{d}$ is an algebra.
Indeed, if $Z_j + iW_j \in {A}^{p,\alpha}_{d}$ for $j=1,2$,
then the product $Z+iW=(Z_1+iW_1)(Z_2+iW_2)$ is harmonic due to the Cauchy-Riemann equations and
the fact that each component is harmonic.
The Cauchy-Riemann equation $Z_x=W_y$ implies that $[W]$ is independent of $y$.
But $[W(\cdot,-d)]=0$ so it follows that $[w]=0$ as well.  Thus ${A}^{p,\alpha}_{d}$ is an algebra.
The discussion in Appendix A shows that
\be
z+iw\in {A}^{p,\alpha}_{d} \Leftrightarrow z=[z]+\mcc_d(w).\label{htrans}\ee

Now for any $(m,Q, v)\in\R\times\R\times C^{1,\alpha}_{2\pi}(\R)$ with $[v]=h$, let $V$ be
the solution of (\ref{VV}) and $\zeta$ the solution of (\ref{gc}). In particular,
\begin{equation}\label{zeta_x}
\zeta_x(x,0)=-\Upsilon V(x,0) V_x(x,0)\quad\hbox{for all }\quad x\in\R
\end{equation}
and $\zeta_y$ is given by \eqref{vzc}.
Also, recall from the previous section that (\ref{add}) is merely (\ref{gc2}).
As a consequence (\ref{meana}) is equivalently expressed as
\begin{equation}
[ (Q-2gV(\cdot,0))(V_x^2(\cdot,0)+V_y^2(\cdot,0))-(\zeta_y(\cdot,0)\,
+\,\Upsilon V(\cdot,0)V_y(\cdot,0))^2]=0.\label{megc2}
\end{equation}
				 Notice that
\begin{align}& V_y(\cdot,0)+iV_x(\cdot,0)   \in A^{0,\alpha}_{kh},\label{vd}\\
& \zeta_y(\cdot,0)+i\zeta_x(\cdot,0)   \in A^{0,\alpha}_{kh},\label{zd}\\
& \{\zeta_y^2-\zeta_x^2+2i\zeta_y\zeta_x\}\big|_{\displaystyle(x,0)} \in A^{0,\alpha}_{kh}.\label{zd2}
\end{align}

We now claim that (\ref{vara}) may be expressed in an equivalent way as
\be \label{rhh}\left(x\mapsto \{(Q-2gV-\Upsilon^2V^2)(V_y-iV_x)-2\Upsilon \zeta_yV\}\big|_{\displaystyle(x,0)}\right)\in A^{0,\alpha}_{kh}.\ee
To justify this claim, let us consider the function in \eqref{rhh} in more detail.
Its imaginary part is
\begin{equation}\label{w}
w=-(Q-2gv-\Upsilon^2v^2)\,v'\,,
\end{equation}
which satisfies $[w]=0$.  Because of \eqref{vzc}, its real part is
\be \label{zed}z=(Q-2gv-\Upsilon^2v^2)\, \Big( \displaystyle\frac{1}{k}+\mcc_{kh}(v')\Big) \\[0.3cm]
-\,2\Upsilon\,v\, \Big(\displaystyle\frac{m}{kh}- \displaystyle\frac{\Upsilon}{2kh}\,[v^2]
-\Upsilon\,\mcc_{kh}(vv') \Big),\ee
which has the average
$$[z]=\frac{Q-2gh-2\Upsilon m}{k} -2g\,[v\,\mcc_{kh}(v')] - \Upsilon^2\,[v^2\,\mcc_{kh}(v')] +
2\Upsilon^2\,[v\,\mcc_{kh}(vv')].$$
But $f \mapsto \mcc_{kh}(f')$ being self-adjoint, we have
\begin{eqnarray}\label{aux}
[v^2\,\mcc_{kh}(v')] &=& \displaystyle\frac{1}{2\pi}\,\int_{-\pi}^{\pi} v^2\,\mcc_{kh}(v')\,dx =
\displaystyle\frac{1}{2\pi}\,\int_{-\pi}^{\pi} v\,\mcc_{kh}\Big((v^2)'\Big)\,dx\\[0.3cm]
&=& \displaystyle\frac{1}{\pi}\,\int_{-\pi}^{\pi} v\,\mcc_{kh}(vv')\,dx = 2\,[v\,\mcc_{kh}(vv')] \nonumber
\end{eqnarray}
so that
\begin{equation}\label{alpha}
[z]=\frac{Q-2gh-2\Upsilon m}{k} -2g\,[v\,\mcc_{kh}(v')].
\end{equation}
According to (\ref{htrans}), the statement (\ref{rhh}) is equivalent to $z=[z]+\C_d(w)$.
We express $z$ by (\ref{zed}), $[z]$ by (\ref{alpha}) and $w$ by (\ref{w}).
Then all the terms in the equation that result from the equation $z=[z]+\C_d(w)$ are compared with the
terms in (\ref{vara}).  This leads to the conclusion that (\ref{rhh}) is equivalent to (\ref{vara}), as we claimed.

$(i)$
Suppose that (\ref{vara}) and (\ref{meana}) hold. Then, as we just pointed out,
(\ref{rhh}) and (\ref{megc2}) hold.
Since  $A^{0,\alpha}_{kh}$ is an algebra, it follows from (\ref{rhh}) and (\ref{vd}) by multiplying by $V_y+iV_x$  that
\be
\left(x\mapsto \{(Q-2gV-\Upsilon^2V^2)(V_x^2+V_y^2)
-2\Upsilon \zeta_yV(V_y+iV_x)\}\big|_{\displaystyle(x,0)}\right)\in A^{0,\alpha}_{kh}.         \label{qel}\ee
Upon using (\ref{zeta_x}), this may be rewritten as
\be
 \label{rhhp}\left(x\mapsto \{(Q-2gV-\Upsilon^2V^2)(V_x^2+V_y^2)
 -2\Upsilon\zeta_y V V_y+2i\zeta_x\zeta_y\}\big|_{\displaystyle(x,0)}\right)\in A^{0,\alpha}_{kh}.   \ee
It now follows, taking (\ref{zd2}) into account, that
\be
 \label{dhp}\left(x\mapsto \{(Q-2gV-\Upsilon^2V^2)(V_x^2+V_y^2)
 -2\Upsilon\zeta_y V V_y- (\zeta_y^2-\zeta_x^2)\}\big|_{\displaystyle(x,0)}\right)\in A^{0,\alpha}_{kh}.   \ee
Since this function is real, it follows from (\ref{htrans}) that it must be a constant, a fact which may be written, upon using also (\ref{zeta_x}),
as
\be \label{cdg}\{(Q-2gV)(V_x^2+V_y^2)-(\zeta_y+\Upsilon VV_y)^2\}\big|_{\displaystyle(x,0)}\equiv C_0.\ee
It now follows from (\ref{megc2}) that $C_0=0$, so that (\ref{gc2}), and therefore (\ref{add}) holds.

$(ii)$
Conversely, suppose that (\ref{add}) holds. Then (\ref{meana}) is obtained by taking averages.
Also (\ref{add}) is exactly the same as (\ref{cdg}) with $C_0=0$.
This implies that (\ref{dhp}) holds, then, in view of (\ref{zd2}), that (\ref{rhhp}) holds, and then, upon using (\ref{zeta_x}), that (\ref{qel}) holds. It is a consequence of (\ref{noz}) that
\[\left(x\mapsto \{V_y(x,0)+iV_x(x,0)\}^{-1}\right)\in A^{0,\alpha}_{kh}.\]
Again using the fact that $A^{0,\alpha}_{kh}$ is an algebra, we deduce from (\ref{qel}) that (\ref{rhh}) holds, which means, as proved earlier, that (\ref{vara}) holds.
\end{proof}

\begin{remark}{\rm
Let us make an observation about the statement (\ref{rhh}), which is at the heart of the proof of Theorem 1. If we denote, for all $x\in \R$,
\begin{eqnarray*}
F(x) &=& V_y(x,0)+iV_x(x,0), \\
G(x) &=& \{(Q-2gV-\Upsilon^2V^2)(V_y-iV_x)-2\Upsilon\zeta_yV\}\big|_{\displaystyle(x,0)},\\
a(x) &=& Q-2gV(x,0)-\Upsilon^2V^2(x,0), \\
b(x) &=& -2\Upsilon V(x,0)\zeta_y(x,0),
\end{eqnarray*}
then $F, G\in A^{0,\alpha}_{kh}$, $a,b$ are real-valued functions, and (\ref{rhh}) takes the form
$a \overline F +b= G$, where $\overline F$ denotes the complex conjugate of $F$. An equation of this form, but with $a,b$ given, and $F,G$ to be determined, is called a \emph{Riemann-Hilbert problem}. In our problem, however, $a,b, F, G$ are coupled, and in our proof of Theorem 1 we made no use of any general result of Riemann-Hilbert theory. }
\end{remark}

\subsection{Variational structure of the new formulation} Given $v \in C^{2,\alpha}_{2\pi}(\R)$,
we separate the average $h$ of $v$ by setting
\begin{equation}\label{vw}
v=w+h\quad\hbox{with}\quad [w]=0\,.
\end{equation}
With this notation we introduce the functional
\begin{align}\label{ev}
\Lambda(w,h)&=\displaystyle\int_{-\pi}^\pi\Big(Q\,v\,-\,g\,v^2-\,\frac{\Upsilon^2}{3}\,v^3\Big)\,\Big(\frac{1}{k}\,+\,\mcc_{kh}(v')\Big)\,dx \nonumber\\&\quad
+\,\displaystyle\int_{-\pi}^\pi\Big(m\,-\,\frac{\Upsilon}{2}\,v^2\Big)\,\Big(\frac{m}{kh}\,-\,\frac{\Upsilon}{2kh}\,[v^2]\,
-\,\Upsilon\,\mcc_{kh}(vv')\Big)\,dx\,,
\end{align}
of which the domain of definition is the space $\mathcal S$ of pairs $(w,h)$
with $h>0$ and $w \in C^{2,\alpha}_{2\pi,\circ}(\R)$, where the subscript $\circ$ indicates mean zero.
Taking variations of $\Lambda$ with respect
to the function $w$ and with respect to the parameter $h$, we will obtain the reformulation of the governing equations
(2.22). While the particular form of the functional $\Lambda$ in (\ref{ev}) may appear at this point unmotivated, the
reason for considering it is that, as we show in full detail in Appendix B.2, this form is obtained by transforming via
conformal mapping the natural energy associated to the flow in the physical plane. In this section we
content ourselves with merely deriving the equation for the critical points of the functional $\Lambda$.

\begin{theorem}\label{t2}
Any critical point of $\Lambda$ in the space $\mathcal S$ satisfies the equation (\ref{vara})
as well as the constraint (\ref{meana}).
\end{theorem}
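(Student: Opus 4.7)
The plan is to compute the two variations of $\Lambda$ separately: the functional derivative in mean-zero directions $\varphi\in C^{2,\alpha}_{2\pi,\circ}(\R)$, which will yield (\ref{vara}), and the partial derivative $\partial_h\Lambda$, which will yield the scalar constraint (\ref{meana}). This matches the Riemann--Hilbert decomposition of the Bernoulli condition (\ref{add}) into an equation (\ref{vara}) for the oscillating modes and a single scalar identity (\ref{meana}) for the mean mode, as established in Theorem \ref{equiv}.

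First I would compute $\frac{d}{d\epsilon}\Lambda(w+\epsilon\varphi,h)\big|_{\epsilon=0}$ directly from the formula (\ref{ev}). The essential tool is the self-adjointness identity
\[
\int_{-\pi}^{\pi} f\,\mcc_{kh}(g')\,dx = \int_{-\pi}^{\pi} g\,\mcc_{kh}(f')\,dx
\]
(already used in (\ref{aux})), which lets me move every occurrence of $\mcc_{kh}$ off $\varphi'$ and onto polynomial factors of $v$. After collecting, the variation takes the form $\int E(v)\,\varphi\,dx$, where $E(v)$ coincides with the left-hand side of (\ref{vara}) up to an additive constant in $x$. Since $\varphi$ ranges only over mean-zero test functions, criticality forces $E(v)$ to be constant in $x$; taking the $x$-average then pins that constant down and matches the terms $-(Q-2\Upsilon m-2gh)/k$ and $2g\,[v\,\mcc_{kh}(v')]$ appearing in (\ref{vara}). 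This yields precisely (\ref{vara}).

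Next, setting $\partial_h\Lambda=0$, I would track three sources of $h$-dependence: the explicit factors $1/(kh)$, the identification $v=w+h$ which implies $\partial_h v\equiv 1$, and the $h$-dependence of the operator $\mcc_{kh}$ itself. The contribution from $\partial_h v\equiv 1$ is, by the definition of $E(v)$ above, just $2\pi$ times the $x$-average of $E(v)$, which vanishes once (\ref{vara}) is in hand. Using the formulas (\ref{vzc}) to recognise $\frac{1}{k}+\mcc_{kh}(v')=V_y|_{y=0}$ and $\frac{m}{kh}-\frac{\Upsilon}{2kh}[v^2]-\Upsilon\mcc_{kh}(vv')=\zeta_y|_{y=0}$, I would then check that the remaining contributions regroup, after one integration by parts and one further application of the self-adjointness identity, into the averaged-squared-Bernoulli equation (\ref{meana}).

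The main obstacle I anticipate is the algebraic bookkeeping in the first step: $\Lambda$ contains several cubic-in-$v$ products entangled with $\mcc_{kh}$, and each must be differentiated, have self-adjointness applied, and then regrouped so that the result matches (\ref{vara}) term by term, including the correct constants. A secondary subtlety is the $h$-derivative of $\mcc_{kh}$ in the second step; one may avoid explicit Fourier computation (where the symbol of $\mcc_{kh}$ on the $n$-th mode is $-i\coth(nkh)$) by observing that, once the pointwise equation (\ref{vara}) has been secured, these extra $h$-derivative terms are precisely what is needed to close the surviving expression into the form (\ref{meana}).
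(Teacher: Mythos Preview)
Your treatment of the $w$-variation matches the paper's exactly: compute $\frac{\delta\Lambda}{\delta w}\,\varphi=\int_{-\pi}^{\pi}\eta\,\varphi\,dx$ via the self-adjointness of $f\mapsto\mcc_{kh}(f')$, conclude that $\eta$ is constant, then identify that constant by averaging (using (\ref{aux})). This part is correct.

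The $h$-variation part has a genuine gap. Your chain-rule decomposition is legitimate, but the claim that the contribution from $\partial_h v\equiv 1$ equals $2\pi\,[E(v)]$ and \emph{vanishes} is wrong. The functional variation in the direction of the constant function $1$ is $\int_{-\pi}^{\pi}\eta\cdot 1\,dx=2\pi[\eta]$, and once (\ref{vara}) holds one has $\eta=[\eta]=\frac{Q-2\Upsilon m-2gh}{k}-2g[v\,\mcc_{kh}(v')]$, which is generically nonzero. (Equation (\ref{vara}) asserts that its left-hand side vanishes identically, not that $\eta$ does; the left-hand side is $\eta$ \emph{minus} that constant.) So this term does not drop out, and must be tracked together with the remaining contributions.

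More seriously, the paper does \emph{not} avoid the explicit $h$-derivative of the operator: it establishes the Fourier identity
\[
\frac{d}{dh}\big(\mcc_{kh}(f')\big)=-k f''-k\,\mcc_{kh}^{2}(f''),
\]
and then performs a substantial manipulation (integration by parts, self-adjointness again, and substituting the expression for $\eta$ back into the resulting formula) to arrive at
\[
\frac{\delta\Lambda}{\delta h}=-k\int_{-\pi}^{\pi}\eta\,\mcc_{kh}(v')\,dx
+k\int_{-\pi}^{\pi}\Big\{(Q-2gv)\big((v')^{2}+(\tfrac{1}{k}+\mcc_{kh}(v'))^{2}\big)-\big(\tfrac{m}{kh}-\tfrac{\Upsilon}{2kh}[v^{2}]-\Upsilon\mcc_{kh}(vv')+\Upsilon v(\tfrac{1}{k}+\mcc_{kh}(v'))\big)^{2}\Big\}\,dx.
\]
At a critical point the first integral vanishes because $\eta$ is constant and $[\mcc_{kh}(v')]=0$, and the second is exactly the bracket in (\ref{meana}). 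Your hope that the $\partial_h\mcc_{kh}$ terms ``regroup'' into (\ref{meana}) without carrying out this computation is not substantiated; the paper's derivation shows that this regrouping is precisely where the nontrivial work lies, and it depends essentially on the operator identity above.
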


\begin{proof}  We compute the first variation of $\Lambda$ at $v$ by considering in turn
the variations of $w$ and of $h$.
First,
for $\varphi$ smooth, $2\pi$-periodic and
with $[\varphi]=0$, we compute from (\ref{ev}) the variation
\[\displaystyle\frac{\delta \Lambda}{\delta w}\,(w,h)\,\varphi=\displaystyle\lim_{\varepsilon \to 0}\,\frac{\Lambda(w+\varepsilon\varphi,h)-\Lambda(w,h)}{\varepsilon}\]
with respect to $w$ as
$$\begin{array}{r}
\displaystyle\frac{\delta \Lambda}{\delta w}\,(w,h)\,\varphi =\displaystyle\int_{-\pi}^\pi\Big(Q\,v\,-\,g\,v^2\,-\,\frac{\Upsilon^2}{3}\,v^3\,\Big)\,\mcc_{kh}(\varphi')\,dx +\displaystyle\int_{-\pi}^\pi\Big(Q\,-\,2\,g\,v\,-\,\Upsilon^2\,v^2\Big)\,\varphi\,\Big(\frac{1}{k}+\mcc_{kh}(v')\Big)\,dx\\
 +\displaystyle\int_{-\pi}^\pi\Big(m-\frac{\Upsilon}{2}\,v^2\Big)\,\Big(-\frac{\Upsilon}{kh}[v\varphi]-\Upsilon\mcc_{kh}((v\varphi)')\Big)\,dx
 -\Upsilon\displaystyle\int_{-\pi}^\pi v \varphi\,\Big(\frac{m}{kh}\,-\,\frac{\Upsilon}{2kh}\,[v^2]\,
-\Upsilon\mcc_{kh}(vv')\Big)\,dx\,.
\end{array}$$
						Using the fact
that $f \mapsto \mcc_{kh}(f')$ is self-adjoint, this may be written as
$$\begin{array}{ccc}
\displaystyle\frac{\delta \Lambda}{\delta w}\,(w,h)\,\varphi
&=&\displaystyle\int_{-\pi}^\pi\,\mcc_{kh}\Big(\Big(Q-2gv-\Upsilon^2v^2\Big)\,v'\Big)\varphi \,dx
+ \,\displaystyle\int_{-\pi}^\pi\Big(Q-2gv-\Upsilon^2v^2\Big)\Big(\frac{1}{k}+\mcc_{kh}(v')\Big)\varphi\,dx\\
&&\quad
-\,\displaystyle\frac{\Upsilon}{kh}\Big(m-\frac{\Upsilon}{2}\,[v^2]\Big)\,\displaystyle\int_{-\pi}^\pi v\varphi\,dx\,+\,\Upsilon^2\,\int_{-\pi}^\pi v\,\mcc_{kh}(vv')\varphi\,dx\\
&&\quad -\,\Upsilon\,\displaystyle\,\displaystyle\int_{-\pi}^\pi\, v\Big(\frac{m}{kh}\,-\,\frac{\Upsilon}{2kh}\,[v^2]\,-\,\Upsilon\,\mcc_{kh}(vv')\Big)\varphi\,dx\,.
\end{array}$$
						Therefore
\begin{equation}
\displaystyle\frac{\delta \Lambda}{\delta w}\,(w,h)\,\varphi= \int_{-\pi}^\pi \eta\,\varphi\,dx,\label{varw}
\end{equation}
where
\begin{align}\label{defeta}
\eta&=\displaystyle\mcc_{kh}\Big(\Big(Q\,-\,2gv\,-\,\Upsilon^2\, v^2\Big)\,v'\Big) +
\Big(Q\,-\,2gv\,-\,\Upsilon^2\, v^2\Big)\,\Big( \displaystyle\frac{1}{k} +
\mcc_{kh}(v')\Big)\nonumber\\&\qquad
-\displaystyle\,2\Upsilon\,v \,\Big( \frac{m}{kh} - \frac{\Upsilon}{2kh}\,[v^2] - \Upsilon\,\mcc_{kh}(vv')\Big).
\end{align}
Now, at a critical point $v=w+h$ of $\Lambda(w,h)$, we of course have
$\displaystyle\frac{\delta \Lambda}{\delta w}\,(w,h)\,\varphi=0$
for all smooth $2\pi$-periodic functions $\varphi$ with $[\varphi]=0$, so that
(\ref{varw}) implies that $\eta$ is a constant.

Next, we compute the variation of $\Lambda$  with respect to $h$. For this purpose, we notice that for
any function $f \in C^{2,\alpha}_{2\pi}(\R)$ we have
\begin{equation}\label{varc}
\frac{d}{dh}\,\Big(\mcc_{kh}\,(f')\Big)=-\,k\,f'' \,-\,k\,\mcc^2_{kh}\,(f'')\,.
\end{equation}
Indeed, writing
$$f(x)=[f]+\sum_{n=1}^\infty \Big(a_n\cos(nx)+b_n\sin(nx)\Big),$$
we have
$$\mcc_{kh}\,(f')=\sum_{n=1}^\infty n\coth(nkh)\,\Big(a_n\cos(nx)+b_n\sin(nx)\Big),$$
so that
$$
\displaystyle\frac{d}{dh}\,\Big(\mcc_{kh}\,(f')\Big) =
\displaystyle\sum_{n=1}^\infty kn^2(1\,  -  \,\coth^2(nkh))\Big( a_n\cos(nx)+b_n\sin(nx)\Big)\\
=-\,k\,f'' \,-\,k\,\mcc^2_{kh}\,(f'').$$
						Expressing
$\Lambda$ explicitly in terms of $w$, the functional $\Lambda(w,h)$ equals
$$\displaystyle\,\int_{-\pi}^\pi\Big(\,Q\,w\,+\,Q\,h-\,g\,w^2\,-\,2g\,w\,h\,-\,g\,h^2\,-\,\displaystyle\frac{\Upsilon^2}{3}\,w^3\,-\,\Upsilon^2\,w^2\,h\,-\,\Upsilon^2\,w\,h^2
\,-\,\displaystyle\frac{\Upsilon^2}{3}\,h^3
\Big)\,\Big(\frac{1}{k}+\mcc_{kh}(w')\Big)\,dx$$
$$+\,\displaystyle\,\int_{-\pi}^\pi\Big(m-\frac{\Upsilon}{2}\,w^2\,-\,\Upsilon\,w\,h\,-\,\frac{\Upsilon}{2}\,h^2\Big)\,
\Big(\frac{m}{kh}\,-\,\frac{\Upsilon}{2kh}\,[w^2]\,-\,\frac{\Upsilon\,h}{2k}\,
-\,\Upsilon\,\mcc_{kh}(ww')\,-\,\Upsilon\,h\,\mcc_{kh}(w')\Big)\,dx\,$$
By
 (\ref{varc}) we now compute
\begin{align}\label{ew2}
\displaystyle\frac{\delta \Lambda}{\delta h}\,(w,h)&=\displaystyle\,\int_{-\pi}^\pi(Q-2gv-\Upsilon^2v^2)\,\Big(\frac{1}{k}+\mcc_{kh}(v')\Big)\,dx \nonumber\\
&\quad\,+\,\displaystyle\,\int_{-\pi}^\pi\Big(Q\,v\,-\,g\,v^2\,-\,\frac{\Upsilon^2}{3}\,v^3\,\Big)\,\Big(-k\,v''\,-\,k\,\mcc^2_{kh}(v'')\Big)\,dx \nonumber\\
&\quad -\,\displaystyle{\Upsilon}\,\int_{-\pi}^\pi v\,\,\Big(\frac{m}{kh}\,-\,\frac{\Upsilon}{2kh}\,[v^2]\,-\,\Upsilon\,\mcc_{kh}(vv')\Big)\,dx \\
&\quad +\,\displaystyle\,\int_{-\pi}^\pi \Big(m-\frac{\Upsilon}{2}\,v^2\Big)\,\Big\{-\frac{m}{kh^2}\,+\,\frac{\Upsilon}{2kh^2}\,[v^2]\,-\,\frac{\Upsilon}{k}-\,\Upsilon\,\mcc_{kh}(v') \nonumber\\
&\hskip 7cm \,+\,\Upsilon k\,(vv')'\,+\Upsilon k\,\mcc_{kh}^2\Big((vv')'\Big)\Big\}\,dx\,, \nonumber
\end{align}
since the terms $\pm \Upsilon hk(1+\C_{kh}^2) v''$ have canceled each other.
In the second and fourth integral in (\ref{ew2}) we integrate once by parts the terms involving $v''$ and $(vv')'$,
and for the terms
involving $\mcc_{kh}^2(v'')$ and $\mcc_{kh}^2\big((vv')'\big)$ we use the fact that $f \mapsto \mcc_{kh}(f')$ is
self-adjoint, to express (\ref{ew2}) as
\begin{align}\label{ew3}
\displaystyle\frac{\delta \Lambda}{\delta h}\,(w,h)&=\displaystyle\,\int_{-\pi}^\pi(Q-2gv-\Upsilon^2v^2)\,\Big(\frac{1}{k}+\mcc_{kh}(v')\Big)\,dx \,+\,\displaystyle k\,\int_{-\pi}^\pi (Q-2gv-\Upsilon^2v^2)\,(v')^2\,dx \nonumber\\
&\quad -\,\displaystyle k\,\int_{-\pi}^\pi\mcc_{kh}\Big((Q-2gv-\Upsilon^2v^2)v'\Big)\mcc_{kh}(v')\,\,dx\nonumber\\
&\quad \,
-\,\displaystyle \Upsilon\,\int_{-\pi}^\pi v\,\,\Big(\frac{m}{kh}\,-\,\frac{\Upsilon}{2kh}\,[v^2]\,-\,\Upsilon\,\mcc_{kh}(vv')\Big)\,dx
\\&\qquad
+\,\displaystyle\int_{-\pi}^\pi \Big(m-\frac{\Upsilon}{2}\,v^2\Big)\,\Big\{-\frac{m}{kh^2}+\frac{\Upsilon}{2kh^2}\,[v^2]-\frac{\Upsilon}{k}
-\Upsilon\,\mcc_{kh}(v')
\Big\}\,dx\,\nonumber\\&\qquad
+\,\displaystyle k\Upsilon^2\,\int_{-\pi}^\pi v^2(v')^2\,dx\,-\,\displaystyle k\Upsilon^2\,\int_{-\pi}^\pi \Big( \mcc_{kh}(vv')\Big)^2\,dx\,.\nonumber
\end{align}
			We further use (\ref{defeta}) to substitute the third integral in (\ref{ew3}) by
\begin{align*}
 &-k\int_{-\pi}^\pi \eta \,\mcc_{kh}(v')\,dx+ \displaystyle k\,\int_{-\pi}^\pi (Q-2gv-\Upsilon^2v^2)\,\Big(\frac{1}{k}+\mcc_{kh}(v')\Big)\,\mcc_{kh}(v')\,dx\,
 \\
 &\qquad -2\,k\Upsilon\,\displaystyle\int_{-\pi}^\pi v\,\Big\{\frac{m}{kh}\,-\,\frac{\Upsilon}{2kh}\,[v^2]\,- \,\Upsilon\,\mcc_{kh}(vv')\Big\}
 \,\mcc_{kh}(v')\,dx.
\end{align*}
Consequently  (\ref{ew3}) takes the form
\begin{eqnarray*}
\displaystyle\frac{\delta \Lambda}{\delta h}\,(w,h) &=& -k\,\displaystyle\int_{-\pi}^\pi \eta \,\mcc_{kh}(v')\,dx
+\displaystyle k\,\int_{-\pi}^\pi(Q-2gv-\Upsilon^2v^2)\,\Big\{(v')^2\,+\,\Big(\frac{1}{k}+\mcc_{kh}(v')\Big)^2\Big\}\,dx \\[0.1cm]
&&
-\,2 k\Upsilon\,\displaystyle\int_{-\pi}^\pi v\,\Big\{\frac{m}{kh}\,-\,\frac{\Upsilon}{2kh}\,[v^2]\,- \,\Upsilon\,\mcc_{kh}(vv')\Big\}
 \,\mcc_{kh}(v')\,dx\,\\[0.1cm]
&&  -\,\displaystyle \Upsilon\,\int_{-\pi}^\pi v\,\,\Big(\frac{m}{kh}\,-\,\frac{\Upsilon}{2kh}\,[v^2]\,-\,\Upsilon\,\mcc_{kh}(vv')\Big)\,dx\\[0.1cm]
&&
+\,\displaystyle\,\int_{-\pi}^\pi \Big(m-\frac{\Upsilon}{2}\,v^2\Big)\,\Big\{-\frac{m}{kh^2}\,+\,\frac{\Upsilon}{2kh^2}\,[v^2]\,-\,\frac{\Upsilon}{k}\Big\}\,dx
\\[0.2cm]
&&
+\,\displaystyle\frac{\Upsilon^2}{2}\,\int_{-\pi}^\pi v^2\,\mcc_{kh}(v')\,dx
+\,\displaystyle k\Upsilon^2\,\int_{-\pi}^\pi v^2(v')^2\,dx\,-\,\displaystyle k\Upsilon^2\,\int_{-\pi}^\pi \Big( \mcc_{kh}(vv')\Big)^2\,dx.
\end{eqnarray*}
One can easily check that the fifth integral term in the above expression is precisely
$$-\,k\,\int_{-\pi}^\pi \Big\{ \frac{m}{kh}\,-\,\frac{\Upsilon}{2kh}\,[v^2]\Big\}^2\,dx\,-\,\Upsilon\,\int_{-\pi}^\pi v\,\Big(\frac{m}{kh}\,-\,\frac{\Upsilon}{2kh}\,[v^2]\Big)\,dx$$
while the self-adjointness of $f \mapsto \mcc_{kh}(f')$ yields
$$\frac{\Upsilon^2}{2}\,\int_{-\pi}^\pi v^2\,\mcc_{kh}(v')\,dx={\Upsilon}^2\,\int_{-\pi}^\pi v\,\mcc_{kh}(vv')\,dx\,.$$
Since $[\mcc_{kh}(vv')]=0$, we get
\begin{eqnarray*}
\displaystyle\frac{\delta \Lambda}{\delta h}\,(w,h) &=& -k\int_{-\pi}^\pi \eta \,\mcc_{kh}(v')\,dx\,
+\displaystyle k\,\int_{-\pi}^\pi(Q-2gv-\Upsilon^2v^2)\,\Big\{(v')^2\,+\,\Big(\frac{1}{k}+\mcc_{kh}(v')\Big)^2\Big\}\,dx \\
&&\quad \,-\,\displaystyle k\,\int_{-\pi}^\pi\Big\{ \frac{m}{kh}\,-\,\frac{\Upsilon}{2kh}\,[v^2]\,-\,\Upsilon\,\mcc_{kh}(vv')\Big\}^2\,dx\,
+\,\displaystyle k\Upsilon^2\,\int_{-\pi}^\pi v^2(v')^2\,dx\\
&&\quad
-\,2k\Upsilon\,\displaystyle\int_{-\pi}^\pi v\,\Big\{\frac{m}{kh}\,-\,\frac{\Upsilon}{2kh}\,[v^2]\,- \,\Upsilon\,\mcc_{kh}(vv')\Big\}
 \,\Big(\frac{1}{k}\,+\,\mcc_{kh}(v')\Big)\,dx\,.
\end{eqnarray*}
It follows that
\begin{align}\label{vh}
&\displaystyle\frac{\delta \Lambda}{\delta h}\,(w,h)=-k\int_{-\pi}^\pi \eta \,\mcc_{kh}(v')\,dx
 + k\displaystyle\int_{-\pi}^\pi(Q-2gv)\,\Big\{(v')^2\,+\,\Big(\frac{1}{k}+\mcc_{kh}(v')\Big)^2\Big\}\,dx\\
&\qquad-\,k\, \displaystyle\int_{-\pi}^\pi\Big\{ \frac{m}{kh}\,-\,\frac{\Upsilon}{2kh}\,[v^2]\,-\,\Upsilon\,\mcc_{kh}(vv')\,
+\,\Upsilon\,v\,\Big(\frac{1}{k}+\mcc_{kh}(v')\Big)\Big\}^2\,dx\,,\nonumber
\end{align}
where $\eta$ is given by (\ref{defeta}).

At a critical point $v=w+h$ of $\Lambda(w,h)$ we also have
$\displaystyle\frac{\delta \Lambda}{\delta h}\,(w,h)=0$.
We have already shown that $\eta$ is a constant.
The exact value of the constant is computed by taking averages in (\ref{defeta}). Using
the identity (\ref{aux}), we obtain precisely (\ref{vara}).
Due to
the fact that $[\mcc_{kh}(v')]=0$, the condition $\displaystyle\frac{\delta \Lambda}{\delta h}\,(w,h)=0$
is easily seen to be equivalent to (\ref{meana}). This completes the proof of Theorem \ref{t2}.
\end{proof}

\begin{remark}{\rm
Let us now discuss the special case $\Upsilon=0$ (irrotational flow). For $k=1$ and $w=v-h$
 (\ref{vara}) reduces to
\begin{equation}\label{var0}
\mu\,\mcc_{h}(w')=w\,+\,w\,\mcc_{h}(w') +\mcc_{h}(ww')\,-\, [w\,\mcc_{h}(w')]
\end{equation}
with $\mu=\displaystyle\frac{Q-2gh}{g}$.
Note that for $\Upsilon=0$ the constant $m$ disappears from (\ref{vara}).
In this case, the constraint (\ref{meana}) merely specifies the value of $m$. Setting
$$\beta=[w\,\mcc_h w'],\quad \tilde{v}=w-\beta,$$
we transform (\ref{var0}) into
\begin{equation}\label{var00}
\tilde{\mu} \,\mcc_h(\tilde{v}')=\tilde{v}+\tilde{v}\,\mcc_h(\tilde{v}') + \mcc_h(\tilde{v}\tilde{v}')
\end{equation}
with $\tilde{\mu}=\mu -2\beta$.
We contrast this formula with the formula for
irrotational steady waves in water of infinite depth,  mentioned in the introduction, namely,
\begin{equation}\label{varid}
\tilde{\mu} \,\mcc(\tilde{v}')=\tilde{v}+\tilde{v}\,\mcc(\tilde{v}') + \mcc(\tilde{v}\tilde{v}'),
\end{equation}
where $\mcc$ is the standard Hilbert transform and $\tilde{\mu}>0$ is a constant
(see \cite[Equation (1.8)]{BDT1}).
Note the direct analogy between (\ref{var00}) and (\ref{varid}).
}
\end{remark}

\section{Existence theory}
\nequation

A powerful approach for establishing the existence of travelling water waves relies on bifurcation theory, tailored for the
study in-the-large of parameter-dependent families of solutions. The existence of waves of {\it small} amplitude is
addressed by means of local bifurcation theory: identifying suitable parameters that, by crossing through certain
thresholds, lead to sudden changes of the corresponding flat-surface flows into genuine waves. Global bifurcation
theory uses topological methods to show that these families of perturbations of simple solutions belong to connected sets
of solutions of {\it global} extent. Since global bifurcation is not a perturbative approach, exploiting instead the topological structure
of the solution set, this global continuum provides wave patterns that are not small disturbances
of flows with a flat-free surface.

In this section we study the existence of solutions $(m,Q,v)$ of (\ref{sys}) in the space $\R\times \R\times C^{2, \alpha}_{2\pi, e}(\bdr)$ and such that $[v]=h$, where  $$C^{2, \alpha}_{2\pi, e}(\bdr) =\{ f\in C^{2, \alpha}_{2\pi}(\bdr):
\ f(x)=f(-x)\text{ for all }x\in\bdr\},$$
for some constant $\alpha \in (0,1)$. Our construction provides parameterized families of solutions, which when the parameter is $0$ satisfy $v\equiv h$.
The requirement that $v$ is an even function
reflects the fact that the corresponding wave profile is symmetric about the crest located at $x=0$. Symmetric
travelling periodic waves are ubiquitous in nature (see e.g. the photographs in \cite{Cb}). Moreover, in the
absence of stagnation points in the flow and for surface waves that are represented by graphs of functions, one can show
that a wave profile that is monotonic between crests and troughs has to be symmetric cf. \cite{CE0, CEW}. In view of Theorem \ref{equiv}, any solution of (\ref{sys}) satisfies also (\ref{m0})-(\ref{m1}); whether or not it will give rise to solutions of the free-boundary problem (\ref{fs})-(\ref{g}) depends on whether or not it also satisfies conditions (\ref{pos})-(\ref{m3}). The existence of solutions of (\ref{sys}) will be provided in Subsection 3.1, while in Subsection 3.2 we study to what extent they satisfy (\ref{pos})-(\ref{m3}), as well as proving further qualitative properties, such as strict monotonicity between any crest and trough.

A construction of solutions of (\ref{add}) by means of local bifurcation theory has been carried out in \cite{CV}. However, the applicability of \emph{global bifurcation theory} requires certain compactness of the nonlinear operators in question, which do not seem to be available for the equation (\ref{add}). Thus, the use of  the system (\ref{sys}) seems essential for the global theory that we develop in what follows.

Note first that (\ref{sys}) has a family of trivial solutions, for which $v\equiv h$, while $Q$ and $m$ are related by
\be Q=2gh + \Big( \frac{m}{h}+\frac{\Upsilon
h}{2}\Big)^2, \label{zar}\ee
where $m\in\R$ is arbitrary. This family represents a curve
\[{\mathcal K}_{triv}:=\left\{\left(m, 2gh + \Big( \frac{m}{h}+\frac{\Upsilon
h}{2}\Big)^2, h\right): m\in\R\right\}\]
in the space $\R\times \R\times C^{2, \alpha}_{2\pi, e}(\bdr)$.
 These solutions
correspond to laminar flows in the fluid domain bounded below by the
rigid bed $\mcb$ and above by the free surface $Y=h$, with stream
function
\[\psi(X,Y)=\frac{\Upsilon}{2}Y^2+\left(\frac{m}{h}-\frac{\Upsilon
h}{2}\right)Y-m, \qquad X\in\bdr, 0\leq Y\leq h,\]
 velocity field
\begin{equation}\label{lb8}
(\psi_Y,-\psi_X)=\Big(\Upsilon Y+\frac{m}{h}-\frac{\Upsilon h}{2}
,0\Big), \qquad X\in\bdr, 0\leq Y\leq h,
\end{equation}
and period $L=2\pi/k$.

\begin{theorem}[Global Bifurcation]   \label{glbp}
Let $h,\,k>0$ and $\Upsilon \in \bdr$ be given. For each $n\in {\mathbb N}$, let
\begin{equation}\label{lb7}
m_{n,\pm}^* =-\frac{\Upsilon h^2}{2}+\,\frac{\Upsilon h\tanh(nkh)}{2nk}\pm
h\,\displaystyle\sqrt{\frac{
\gamma^2\tanh^2(nkh)}{4n^2k^2}+g\,\frac{\tanh(nkh)}{nk}}
\end{equation}
and
\be \label{qlk7} Q_{n,\pm}^*= 2gh + \Big( \frac{m_{n,\pm}^*}{h}+\frac{\Upsilon
h}{2}\Big)^2.\ee
First,
for any $m\in\R$ with $m\not\in\{m_{n,\pm}^*: n\in{\mathbb N}\}$, there exists a neighbourhood in $\R\times \R\times C^{2, \alpha}_{2\pi, e}(\bdr)$ of the point $(m, Q, h)$ on ${\mathcal K}_{triv}$, where $Q$ is related to $m$ by (\ref{zar}), in which the only solutions of (\ref{sys}) are those on  ${\mathcal K}_{triv}$.
Secondly,
consider the points $m_{n,\pm}^*$.
For each integer $n\ge 1$ and each choice of sign in $\pm$, there exists in the space $\R\times \R\times C^{2, \alpha}_{2\pi, e}(\bdr)$ a continuous curve
\be \mathcal{K}_{n,\pm}=\{(m(s), Q(s), v_s):s\in \R\}\ee
of solutions of (\ref{sys}) such that the following properties hold.
\begin{itemize}
\item[(i)]
$(m(0), Q(0), v_0)=(m_{n,\pm}^*, Q_{n,\pm}^*, h)$, where $m_{n,\pm}^*$ and $Q_{n,\pm}^*$ are given by (\ref{lb7}) and (\ref{qlk7});
\item[(ii)] $v_s(x) = h+s\cos(nx) + o(s)$ in $C^{2, \alpha}_{2\pi, e}(\bdr), 0<\ |s| < \varepsilon$, for some $\varepsilon>0$ sufficiently small;
\item[(iii)]
$\{(m,Q,v) \in {\mathcal W}_{n,\pm}:\ v \not\equiv h, \text{(\ref{sys}) holds}\}= \{(m(s), Q(s), v_s) :\ 0< |s| < \varepsilon \}$, for some neighbourhood ${\mathcal W}_{n,\pm}$ of $(m_{n,\pm}^*, Q_{n,\pm}^*, h)$ in $\R\times \R\times C^{2, \alpha}_{2\pi, e}(\bdr)$ and $\varepsilon>0$ sufficiently small;
\item[(iv)]
$Q(s)-2gv_s(x)>0$ for all $s, x\in \R$;
\item[(v)]
$\mathcal{K}_{n,\pm}$ has a real-analytic reparametrization locally around each of its points;
\item[(vi)]
One of the following alternatives occurs:
\begin{itemize}
\item[($\alpha$)] either
\be  \min\left\{\frac{1}{1+||(m(s), Q(s), v_s)||_{\R\times\R\times C^{2, \alpha}_{2\pi, e}}}, \min_{x\in\R}\{Q(s)-2gv_s(x)\}\right\}\to 0 \quad\text{as }s\to\pm\infty;\label{coonc}\ee
    \item[($\beta$)] or there exists $T>0$ such that $ (m(s+T), Q(s+T), v_{s+T})=(m(s), Q(s), v_s)$ for all $s\in\R$.
\end{itemize}
\end{itemize}
Moreover, for each integer $n\ge 1$ and each choice of sign in $\pm$, such a curve of solutions of (\ref{sys}) with the properties (i)-(vi) is unique (up to reparametrization).
\end{theorem}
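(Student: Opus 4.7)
The plan is to reformulate (\ref{sys}) as a real-analytic operator equation and apply the Crandall--Rabinowitz local bifurcation theorem at each $(m_{n,\pm}^*, Q_{n,\pm}^*, h)$, then extend to a global curve using the analytic bifurcation theory of Dancer and Buffoni--Toland (cf.~\cite{BT}). Concretely, I define
\[
\mathcal{F}:\R\times\R\times C^{2,\alpha}_{2\pi,e}(\R)\to C^{1,\alpha}_{2\pi,e}(\R)\times\R,
\]
whose first component is the left-hand side of (\ref{vara}) and whose second component is the left-hand side of (\ref{meana}). Real-analyticity of $\mathcal{F}$ follows from the polynomial character of the nonlinearities combined with the linear mapping properties of $\mathcal{C}_{kh}$ recalled in Appendix A.

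For the local theory, I linearize $\mathcal{F}$ at a trivial solution $(m,Q(m),h)\in\mathcal{K}_{triv}$ in the direction of an even zero-mean mode $\cos(nx)$. Since $\mathcal{C}_{kh}(f')$ is the Fourier multiplier $n\mapsto n\coth(nkh)$, the Fourier modes decouple and the condition for $\cos(nx)$ to lie in the kernel reduces to a quadratic in $m/h+\Upsilon h/2$ whose real roots are exactly the values $m_{n,\pm}^*$ of (\ref{lb7}). Differentiating that quadratic in $m$ at its roots verifies the transversality condition required by Crandall--Rabinowitz and simultaneously shows the kernel is one-dimensional in the even subspace, so a local analytic branch with the leading-order expansion in (ii) emanates from each $(m_{n,\pm}^*, Q_{n,\pm}^*, h)$; the local uniqueness (iii) follows from the same theorem, while the first assertion of the theorem (absence of bifurcations for $m$ outside $\{m_{n,\pm}^*\}$) follows from the implicit function theorem. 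Property (iv) holds at $s=0$ because $Q_{n,\pm}^*-2gh=(m_{n,\pm}^*/h+\Upsilon h/2)^2>0$ and persists on a neighbourhood by continuity.

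To upgrade this local picture to a global curve and obtain (v)--(vi), I apply the Dancer--Buffoni--Toland continuation inside the open set
\[
\mathcal{U}=\{(m,Q,v)\in\R\times\R\times C^{2,\alpha}_{2\pi,e}(\R):\ \min_x(Q-2gv)>0\}.
\]
The three hypotheses to verify are: real-analyticity of $\mathcal{F}$ (already noted); Fredholm index zero of the linearization along the curve, which follows because the principal part of (\ref{vara}) coincides, after dividing by the positive coefficient $Q-2gv-\Upsilon^2 v^2$, with the first-order elliptic pseudodifferential operator $\frac{d}{dx}+\mathcal{C}_{kh}\circ\frac{d}{dx}$ (with the scalar constraint (\ref{meana}) contributing a finite-rank correction); and a properness condition on closed bounded subsets of the curve lying in $\{Q-2gv\geq\delta\}$. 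The theorem then produces a maximal connected analytic curve $\mathcal{K}_{n,\pm}$ along which one of the alternatives ($\alpha$), ($\beta$) holds, and unique up to reparametrization.

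The main obstacle will be verifying the properness hypothesis. Given a sequence $(m_j,Q_j,v_j)$ on the curve with $(m_j,Q_j)$ bounded, $\|v_j\|_{C^{2,\alpha}}\leq M$, and $\min_x(Q_j-2gv_j)\geq\delta>0$, one must extract a subsequence converging in $C^{2,\alpha}_{2\pi,e}$. The strategy is to solve (\ref{vara}) for $(\frac{1}{k}+\mathcal{C}_{kh}(v_j'))$ modulo manifestly compact lower-order terms, then use the Schauder-type estimates for the Hilbert transform on the strip from Appendix A to bootstrap from an Arzel\`a--Ascoli limit in $C^2$ to convergence in $C^{2,\alpha}$. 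The subtlety is that the quadratic term $\mathcal{C}_{kh}(vv')$ does not gain a derivative from $\mathcal{C}_{kh}$ alone, so one must exploit the algebra property of $C^{1,\alpha}_{2\pi}$ and the positivity of the coefficient $Q-2gv-\Upsilon^2 v^2$ in $\mathcal{U}$ to control it without appealing to extra regularity.
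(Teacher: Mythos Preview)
Your overall strategy---recast (\ref{sys}) as $F(\lambda,u)=0$, verify the Crandall--Rabinowitz hypotheses at the points (\ref{lb7}), then globalize via the Dancer--Buffoni--Toland theorem on the open set where $Q-2gv>0$---is exactly the paper's approach, and your treatment of the local bifurcation (Fourier decoupling, kernel computation, transversality) is essentially correct.

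The gap is in your handling of the Fredholm and properness hypotheses, and it stems from a misidentification of the principal part. You claim that after dividing by $Q-2gv-\Upsilon^2 v^2$ the leading operator is $\frac{d}{dx}+\mathcal{C}_{kh}\circ\frac{d}{dx}$. There is no bare $\frac{d}{dx}$ term in (\ref{vara}); every occurrence of $v'$ sits inside $\mathcal{C}_{kh}$. More importantly, the coefficient $Q-2gv-\Upsilon^2 v^2$ is \emph{not} positive on $\mathcal{U}$: your open set only controls $Q-2gv$, and for large $|\Upsilon|$ or large $v$ the extra $-\Upsilon^2 v^2$ can drive the expression negative, so you cannot divide by it. The correct principal part, after absorbing lower-order pieces, is $2(Q-2gv)\,\mathcal{C}_{kh}(\partial_x)$, whose coefficient is exactly the quantity kept positive by $\mathcal{U}$.

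What makes this reduction work---and what is missing from your sketch---is the commutator estimate for $\mathcal{C}_{kh}$ (Lemma~\ref{lm} in Appendix~A): if $f\in C^{j,\alpha}$ and $g\in C^{j-1,\alpha}$ then $f\,\mathcal{C}_{kh}(g)-\mathcal{C}_{kh}(fg)\in C^{j,\delta}$ for any $\delta<\alpha$. Applied with $f=w$, $g=w'$ (and iterated for the cubic terms), this shows that $\mathcal{C}_{kh}(ww')-w\,\mathcal{C}_{kh}(w')$ and $\mathcal{C}_{kh}(w^2w')+w^2\mathcal{C}_{kh}(w')-2w\,\mathcal{C}_{kh}(ww')$ land in $C^{2,\alpha/2}$, hence contribute compact operators from $C^{2,\alpha}$ to $C^{1,\alpha}$. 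The $\Upsilon^2 v^2$ piece of the coefficient is then absorbed into these commutators, leaving precisely $2(Q-2gv)\,\mathcal{C}_{kh}(w')$ plus a compact remainder; this yields Fredholm index zero directly and, by inverting $\mathcal{C}_{kh}\partial_x$ and bootstrapping to $C^{3,\alpha/2}$, the required compactness of solution sets in each $\mathcal{Q}_j$. Your proposed fix via the algebra property of $C^{1,\alpha}$ cannot substitute for this, since it does not gain the derivative you need.

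A minor point: the first component of your $\mathcal{F}$ always has zero mean (by construction of (\ref{vara})), so the target space should be $C^{1,\alpha}_{2\pi,\circ,e}(\R)\times\R$; otherwise your linearization is never surjective. The paper handles this by writing $v=h+w$ with $[w]=0$ and working in the zero-mean subspace from the outset.
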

In this theorem, (i) states where the curve begins, (ii) and (iii) describe the local behavior of the curve,
and (iv)-(vi) describe the global behavior.  The alternative ($\alpha$) means that either the curve
is unbounded in the function space or it approaches a wave of greatest height,
while the alternative ($\beta$) is that the curve forms a loop.

 Our main tool in the proof of Theorem \ref{glbp} is the following version of the global bifurcation theorem
 for real-analytic operators due to Dancer \cite{D} and improved by Buffoni and Toland
 \cite[Theorem 9.1.1]{BT}. That result is however
slightly inaccurate as stated there, and here we provide a corrected version,
slightly modified to better suit our purposes.  For a linear operator $\mcl$ between two Banach spaces,
let us denote by ${\mathcal N}(\mcl)$ its null space and by
${\mathcal R}(\mcl)$ its range.

\begin{theorem}   [Analytic Bifurcation Theory]                 \label{abstract}
Let $\mcx$ and $\mcy$
be Banach spaces, ${\mathcal O}$ be an open subset of $X$ and $F: {\mathcal O}\to \mcy$ be a real-analytic function. Suppose that
\begin{itemize}
\item[$(H_1)$] $(\lambda,0)\in {\mathcal O}$ and  $F(\lambda,0) = 0$ for all $\lambda \in \R$;
\item[$(H_2)$] for some $\lambda^*\in \R$, ${\mathcal N}(\pa_u F(\lambda^\ast,0))$ and $\mcy/{\mathcal
R}(\pa_u F(\lambda^\ast,0))$ are one-dimensional, with the nullspace
generated by $u^\ast$, and the transversality condition $\pa^2_{\lambda,u}
F(\lambda^\ast,0)\,(1,u^\ast) \not \in {\mathcal R}(\pa_u
F(\lambda^\ast,0))$ holds;
\item[$(H_3)$] $\pa_u F(\lambda, u)$ is a Fredholm operator of index zero whenever $F(\lambda, u)=0$ with  $(\lambda, u)\in {\mathcal O}$;
\item[$(H_4)$] for some sequence $({\mathcal Q}_j)_{j\in {\mathbb N}}$ of bounded closed subsets of ${\mathcal O}$ with  ${\mathcal O}= \cup_{j\in{\mathbb N}} {\mathcal Q}_j$, the set $\{(\lambda, u)\in {\mathcal O}: F(\lambda, u)=0\}\cap {\mathcal Q}_j$ is compact for each $j\in {\mathbb N}$.
\end{itemize}
Then there exists in ${\mathcal O}$ a continuous curve
${\mathcal K}=\{(\lambda(s),u(s)) :  s\in\R\}$ of solutions to $F(\lambda, u)=0$
such that:
\begin{itemize}
\item[$(C_1)$] $(\lambda(0), u(0))=(\lambda^*,0)$;
\item[$(C_2)$] $u(s) = su^\ast + o(s)$ in $\mcx,\ |s| < \varepsilon$ as $s\to 0$;
\item[$(C_3)$]
$\{(\lambda,u) \in {\mathcal W}:\ u \neq 0,\ F(\lambda,u)=0\}= \{(\lambda(s),u(s)) :\ 0< |s| < \varepsilon \}$, for some neighbourhood ${\mathcal W}$ of $(\lambda^*,0)$ and $\varepsilon>0$ sufficiently small;
\item[$(C_4)$] ${\mathcal K }$ has a real-analytic parametrization locally around each of its points;
\item[$(C_5)$] One of the following alternatives occurs:
\begin{itemize}
\item[($\alpha$)] for every $j\in\N$, there exists $s_j>0$ such that $(\lambda(s), u(s))\not\in {\mathcal Q}_j$ for all $s\in\R$ with $|s|>s_j$;
    \item[($\beta$)] there exists $T>0$ such that $(\lambda(s+T), u(s+T))=(\lambda(s), u(s))$ for all $s\in\R$.
\end{itemize}
\end{itemize}
Moreover, such a curve of solutions to $F(\lambda, u)=0$ having the properties $(C_1)-(C_5)$ is unique (up to reparametrization).
\end{theorem}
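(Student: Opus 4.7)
The plan is to adapt the strategy of Dancer and of Buffoni--Toland, proceeding in three stages: local bifurcation from $(\lambda^*,0)$, analytic continuation of the resulting arc past possible singular points, and a global gluing argument from which the dichotomy $(C_5)$ falls out via the compactness built into $(H_4)$.

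First, I would establish the local curve by a direct application of the Crandall--Rabinowitz theorem in its real-analytic version. Hypotheses $(H_1)$ (trivial branch) and $(H_2)$ (simple eigenvalue with transversality) are precisely the standing assumptions of that theorem. In combination with the real-analyticity of $F$, they yield $\varepsilon>0$ and a real-analytic map $s\mapsto (\lambda(s),u(s))\in\mathcal{O}$ defined on $(-\varepsilon,\varepsilon)$ with $F(\lambda(s),u(s))=0$, $(\lambda(0),u(0))=(\lambda^*,0)$, $u(s)=su^*+o(s)$, and such that every nontrivial zero of $F$ in a neighbourhood $\mathcal{W}$ of $(\lambda^*,0)$ lies on the curve. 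This delivers $(C_1)$--$(C_3)$ together with local analyticity, i.e.\ $(C_4)$ on an initial segment.

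Second, and this is the technical core, I would extend the arc by analytic continuation. Consider the class $\mathfrak{A}$ of real-analytic maps $\gamma:J\to\mathcal{O}$ with $F\circ\gamma\equiv 0$ that extend the Crandall--Rabinowitz branch, ordered by inclusion after reparametrization. At each point $(\lambda_0,u_0)$ already reached, $(H_3)$ guarantees that $\partial_u F(\lambda_0,u_0)$ is Fredholm of index zero; when it is in fact invertible, the real-analytic implicit function theorem extends $\gamma$ as an analytic graph over $\lambda$ (or over some direction in $\mathcal{X}\times\R$). When $\partial_u F(\lambda_0,u_0)$ is singular, I would carry out a Lyapunov--Schmidt reduction, decomposing $\mathcal{X}=N\oplus N^\perp$ and $\mathcal{Y}=\mathcal{R}\oplus\mathcal{R}^\perp$ with $N=\ker\partial_u F(\lambda_0,u_0)$, to obtain a finite-dimensional analytic bifurcation equation $\Phi:\R\times N\to\mathcal{R}^\perp$. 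The zero set of $\Phi$ near the origin is a real-analytic variety, and invoking the Weierstrass preparation theorem together with the structural results on one-dimensional components of real-analytic sets (as developed in Chapters 7--8 of Buffoni--Toland) one shows that this zero set is, in a punctured neighbourhood, a finite union of analytic arcs meeting at the singular point. One then chooses the unique outgoing branch that continues $\gamma$ with matching orientation; this selects a well-defined analytic extension. Zorn's lemma applied to $\mathfrak{A}$ produces a maximal element, which after a global reparametrization by arc length in some auxiliary analytic metric becomes the desired curve $\mathcal{K}=\{(\lambda(s),u(s)):s\in\R\}$.

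Third, I would derive the dichotomy $(C_5)$ and the uniqueness statement. Suppose alternative $(\alpha)$ fails, i.e.\ there exists $j_0$ such that $\{(\lambda(s),u(s)):s\in\R\}\subset\mathcal{Q}_{j_0}$. By $(H_4)$ this image has compact closure in the zero set of $F$, so one can extract limit points $(\lambda_\infty,u_\infty)$ as $s\to+\infty$. Applying the local parametrization near $(\lambda_\infty,u_\infty)$ from the second stage, the curve must re-enter a previously traced analytic arc; uniqueness of real-analytic continuation then forces $(\lambda(\cdot),u(\cdot))$ to coincide with a shift of itself, yielding a period $T>0$ and hence alternative $(\beta)$. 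The uniqueness of $\mathcal{K}$ up to reparametrization is a direct consequence of the local uniqueness in $(C_3)$ together with the uniqueness of analytic continuation at each regular and singular point.

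The main obstacle will be the second stage, specifically the treatment of singular points and the requirement that the extension selected there be continuous (and in fact analytic on each side). One must exclude the possibility that a branch point of the variety forces the curve to terminate or to bifurcate without a canonical extension; the Buffoni--Toland machinery handles this by showing that the one-dimensional components of the zero set of the reduced map $\Phi$ are themselves analytic arcs, so that the continuation is only non-unique up to a choice among finitely many admissible branches, not obstructed outright. The previously noted inaccuracy in \cite{BT} concerns how alternative $(\alpha)$ is phrased; formulating it in terms of the exhaustion $\{\mathcal{Q}_j\}_{j\in\mathbb{N}}$ rather than as an unbounded-norm statement is precisely what is needed to accommodate the constraint $\min_x\{Q(s)-2gv_s(x)\}\to 0$ that arises in the concrete application of Theorem~\ref{glbp}.
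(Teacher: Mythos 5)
The paper does not reprove this theorem: it treats it as a corrected restatement of \cite[Theorem 9.1.1]{BT}, and the only argument offered (in Remark~3) is that the slightly weaker hypothesis $(H_4)$ still suffices because the proof in \cite{BT} uses compactness only through intersections of the zero set with bounded closed subsets of $\mathcal{O}$, and that is exactly what $(H_4)$ supplies. Your proposal instead tries to reconstruct the Buffoni--Toland argument from scratch. The broad outline is right, but there are two genuine gaps.

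First, the negation of alternative $(\alpha)$ is misstated. Alternative $(\alpha)$ says that for every $j$ the curve eventually leaves $\mathcal{Q}_j$ for good; its negation is that there exists $j_0$ and a sequence $s_k\to\pm\infty$ with $(\lambda(s_k),u(s_k))\in\mathcal{Q}_{j_0}$, i.e.\ the curve \emph{recurs} to $\mathcal{Q}_{j_0}$ infinitely often, not that the whole image is contained in $\mathcal{Q}_{j_0}$. Your extraction of a limit point and the subsequent argument still work from the weaker (correct) statement, but as written the logic is wrong.

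Second, and more substantively, the step you call the technical core --- continuation past a singular point --- is precisely where the proposal elides the real difficulty. After Lyapunov--Schmidt reduction (and note the complements $N^\perp,\mathcal{R}^\perp$ should be topological complements, not orthogonal ones, since $\mathcal{X},\mathcal{Y}$ are only Banach), the local zero set is a real-analytic variety which, near the singular point, is a finite union of an \emph{even} number of analytic arcs; there is in general no ``unique outgoing branch that continues $\gamma$ with matching orientation.'' The Buffoni--Toland machinery makes a canonical \emph{selection} among the outgoing branches, and then the substantial work is to show that this selection rule is globally consistent: the resulting curve admits a local uniformizing real-analytic parametrization around each branch point (exactly the structure recalled in the caption of Figure~2 of the paper) and is unique up to reparametrization. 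A bare Zorn's-lemma argument over a class $\mathfrak{A}$ of analytic extensions does not produce this, because a maximal element need not respect the selection rule and could simply terminate at a branch point. One must either build the selection rule into the definition of $\mathfrak{A}$ or construct the curve iteratively as in \cite{BT}; as it stands, the proposal assumes what it needs to prove at this step.
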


\begin{figure} \centering  \includegraphics[width=9cm]{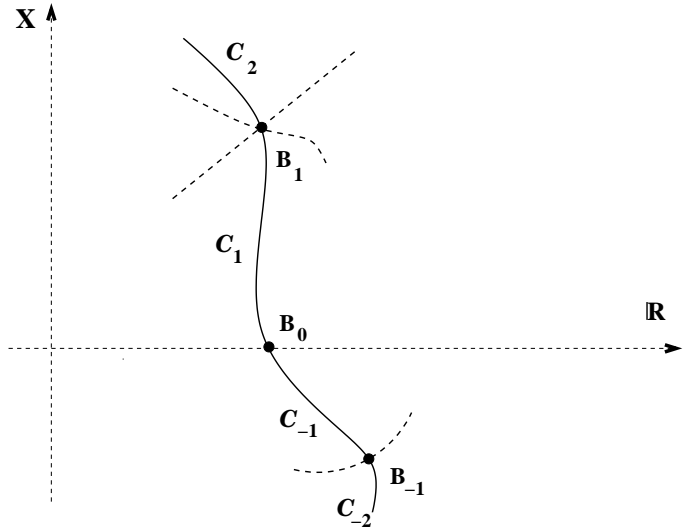}
\caption{\small The global bifurcation curve ${\mathcal K} \subset {\mathbb R} \times X$ consists of distinguished
real-analytic open arcs ${\mathcal C}_j$ that end in the branch points $B_{j-1}$ and $B_j$ if $j>0$, or in the branch points $B_j$ and
$B_{j+1}$ if $j<0$, with $B_0=(\lambda^\ast,0)$ and
${\mathcal C}_1 \cup \{B_0\} \cup {\mathcal C}_{-1}$ being the local bifurcation curve ${\mathcal K}_{loc}$. A point on ${\mathcal C}_j$
corresponds to a non-singular solution (near which the implicit function theorem applies), while
each $B_i$ arises as the unique intersection point of the closures of a finite even number of open one-dimensional
real-analytic varieties. A distinguished arc ${\mathcal C}_j$ can be uniquely continued across $B_j$ by choosing an
outgoing branch ${\mathcal C}_{j + 1}$ if $j>0$, or ${\mathcal C}_{j- 1}$ if $j<0$, so that each curve ${\mathcal C}_j \cup \{B_j\} \cup {\mathcal C}_{j + 1}$ if $j>0$, or ${\mathcal C}_j \cup \{B_j\} \cup {\mathcal C}_{j -1}$ if $j<0$, admits
a local uniformizing real-analytic parametrization near $B_j$.}
\end{figure}

 \begin{remark} {\rm The local version of Theorem \ref{abstract}, in which assumptions $(H_1)-(H_2)$ imply the existence of a  real-analytic local bifurcating curve ${\mathcal K}_{loc}=\{(\lambda(s),u(s)) :  s\in(-\varepsilon,\varepsilon)\}$ of solutions to $F(\lambda, u)=0$ with the properties $(C_1)-(C_3)$ is the real-analytic version of the standard Crandall-Rabinowitz local bifurcation theorem \cite{CR}. The curve ${\mathcal K}$ exhausts all possibilities
of adding real-analytic arcs to the local bifurcation curve ${\mathcal K}_{loc}$ in such a way that ${\mathcal K}$
has a real-analytic parametrization around each of its points (see Figure 2),
but is not necessarily a maximal connected subset of the solution set. }
 \end{remark}

\begin{remark} {\rm
We now discuss how Theorem \ref{abstract} relates to \cite[Theorem 9.1.1]{BT}.
In \cite {BT}, assumption $(H_4)$ is replaced by the slightly stronger assumption that
all bounded and closed subsets (in $\R\times X$) of $\{(\lambda, u)\in {\mathcal O}: F(\lambda, u)=0\}$ are compact.
In \cite{BT} it is proven that there does not exist any sequence $s_k\to\infty$ such that
the sequence $(\lambda(s_k), u(s_k))_{k\geq 1}$ is \emph{both} bounded \emph{and} bounded away
from the boundary of ${\mathcal O}$.
However, \cite{BT} incorrectly claims the strictly stronger statement that
\begin{itemize}
\item[$(C_5)$]$(\alpha')$ either $||(\lambda(s),u(s))||_{\R\times X}\to \infty$ as $s\to\infty$, or $\textnormal{dist}((\lambda(s),u(s)), \partial {\mathcal O})\to 0$ as $s\to\infty$.
\end{itemize}
The correct conclusion for \cite[Theorem 9.1.1]{BT} may be restated in the following way:

 As $s\to\infty,\ (\lambda(s), u(s))$ eventually leaves every bounded closed subset $B$ of ${\mathcal O}$.

\noindent Indeed, it suffices to notice that
the intersection of $B$ with $\{(\lambda, u)\in {\mathcal O}: F(\lambda, u)=0\}$ is compact.
Thus our slightly weaker assumption (H4) suffices to reach the correct conclusion by means of exactly
the same arguments as in the proof of \cite[Theorem 9.1.1]{BT}.
} \end{remark}

For the purpose of applying  Theorem \ref{abstract} to problem (\ref{sys}), it is necessary to make some simple changes of variables. Since one must necessarily have  $[v]=h$, it is natural to work with the function
\be w=v-h,\label{cjn}\ee
for which $[w]=0$. Then $w$ satisfies
\begin{subequations}\label{scb}
\begin{eqnarray}\label{varanp}
&& \ \ 2(Q-2gh)\,\mcc_{kh}(w')-\,2g \Big( \mcc_{kh}(ww')+w\,\mcc_{kh}(w')\Big)  - \Upsilon^2\,\Big( \mcc_{kh}(w^2w') +w^2\,\mcc_{kh}(w') - 2w\,\mcc_{kh}(ww')\Big)
 \\[0.1cm]
&&\qquad\qquad + \displaystyle\frac{\Upsilon^2}{kh}\,w[w^2] - \displaystyle\frac{\Upsilon^2}{k}\,w^2
+ \frac{\Upsilon^2}{k}\,[w^2]\,+2g\,[w\,\mcc_{kh}(w')] - \frac{2g}{k}\,w - \frac{2\Upsilon}{k}\Big(\frac{m}{h}+\frac{\Upsilon h}{2}\Big)\,w=0\,, \nonumber
\end{eqnarray}
and
\begin{align}\label{meannp}
&\left[\left\{\frac{1}{k}\left(\frac{m}{h}+\frac{\Upsilon h}{2}\right) -\Upsilon \left(
\frac{[w^2]}{2kh}-\frac{w}{k} +\mcc_{kh}(ww')-w\mcc_{kh}(w')\right)\right\}^2\right]=\\
&\qquad\qquad=\left[(Q-2gh-2gw)\,\left\{w'^2 +
\left(\frac{1}{k}+\mcc_{kh}(w')\right)^2\right\}\right].\non
\end{align}
\end{subequations}
We observe that, although $Q$ and $m$ are related by (\ref{zar}) for trivial solutions,
this need not be the case in general.  This
observation suggests the introduction of a new parameter
\be\mu:= Q-2gh - \Big( \frac{m}{h}+\frac{\Upsilon
h}{2}\Big)^2.\label{mu}\ee
Note also that, for the laminar flows given by (\ref{lb8}), the horizontal velocity at the free
surface is $\frac{m}{h}+\frac{\Upsilon h}{2}$. Since this expression occurs naturally in (\ref{scb}), while also in fluid
dynamics literature it is customary to identify the laminar flows at which nonlinear small-amplitude waves bifurcate
using the speed of their particles at the free surface (rather than the value of their flux), we introduce
another parameter
\be\lambda=\frac{m}{h}+\frac{\Upsilon h}{2}\label{lamb}\ee

The change of parameters $(m,Q)\mapsto (\lambda, \mu)$ given by (\ref{mu})-(\ref{lamb}) is a bijection from $\R^2$ onto itself.
In terms of the new parameters, the system (\ref{scb}) may be written as
\be F(\lambda, (\mu, w))=0,\label{eqso}\ee where $F:\R\times X\to Y$, with
 \be\label{fcs}
\mcx=\bdr\times C^{2,\alpha}_{2\pi,\circ, e}(\bdr),\qquad
\mcy= C^{1,\alpha}_{2\pi, \circ,e}(\bdr)\times \R,
\ee
where the subscripts indicate period $2\pi$,  zero average and evenness, respectively,
and where $F=(F_1,F_2)$ is given by
\begin{subequations}\label{fels}
\begin{eqnarray}\label{varan}
F_1(\lambda, (\mu, w)) &=& 2(\mu+\lb^2)\,\mcc_{kh}(w')-\,2g \Big\{ \mcc_{kh}(ww')+w\,\mcc_{kh}(w')\Big\} \nonumber \\[0.1cm]
&&\quad- \Upsilon^2\,\Big\{ \mcc_{kh}(w^2w') +w^2\,\mcc_{kh}(w') - 2w\,\mcc_{kh}(ww')\Big\} \\[0.1cm]
&&\quad + \displaystyle\frac{\Upsilon^2}{kh}\,w[w^2]  - \displaystyle\frac{\Upsilon^2}{k}\,w^2
+ \frac{\Upsilon^2}{k}\,[w^2]\,+2g\,[w\,\mcc_{kh}(w')] - \frac{2g}{k}\,w - \frac{2\lambda\Upsilon}{k}\,w\,, \nonumber
\end{eqnarray}
and
\begin{align}\label{meann}
 F_2(\lambda, (\mu, w))=\Upsilon^2\,\Big[ \Big\{ \mcc_{kh}(ww') -w\,\mcc_{kh}(w') - \displaystyle\frac{1}{k}\,w + \frac{[w^2]}{2kh}\Big\}^2\Big]
+ \frac{2(2g+\lambda\Upsilon)}{k}\,\big[w\,\mcc_{kh}(w')\big] + 2g\,[w(w')^2] \nonumber\\[0.1cm]
\quad + \,2g\,\big[w\,\big(\mcc_{kh}(w')\big)^2\big] - \displaystyle\frac{\lambda\Upsilon}{k^2h}\,[w^2] - (\mu+\lambda^2)\,
\Big(\big[\big(\mcc_{kh}(w')\big)^2\big] +\big[(w')^2\big]\Big) - \frac{\mu}{k^2}.
\end{align}
\end{subequations}

\begin{proof}[Proof of Theorem \ref{glbp}] We are going to apply Theorem \ref{abstract} in the
setting (\ref{eqso})-(\ref{fels}), and then we shall transfer in a rather straightforward  manner the results
obtained for (\ref{fels}) to the corresponding results for (\ref{sys}). It is obvious that the mapping $F$
is \emph{real-analytic} on $\R\times X$. Let
\be
{\mathcal O}:=\{(\lambda, (\mu, w))\in \R\times X: \mu+\la^2-2gw(x)>0\quad\text{for all }x\in\R\},\label{mco}\ee
which is an open set in $\R\times X$. We now check the validity of the assumptions $(H_1)-(H_4)$.

It is obvious that $(H_1)$ holds.   As for $(H_2)$, we easily compute
\begin{equation}\label{lb5}
\pa_{(\mu,w)}\,F(\lambda,(0,0))\,(\nu, \varphi)
=\left(2\,\lb^2 \,\mcc_{kh}(\varphi') - \frac{2g}{k}\,\varphi - \frac{2\lambda\Upsilon}{k}\,\varphi, -\frac{\nu}{k^2}\right),\qquad
(\nu, \varphi) \in \mcx.
\end{equation}
Expanding the function $\varphi$, which is even, has period $2\pi$, and has zero average,
in the series
 $\varphi(x) = \displaystyle\sum_{n=1}^\infty a_n \cos nx$, we obtain the representation
\begin{equation}\label{lineariz}
\pa_{(\mu,w)}\,F(\lambda,(0,0))\,(\nu, \varphi)
=\left(\sum_{n=1}^\infty a_n  \left\{2\lb^2\, n\coth(nkh)
 - \frac{2g}{k}  - \frac{2\lb\Upsilon}{k} \right\}   \cos (nx), -\frac{\nu}{k^2}\right),\qquad
(\nu, \varphi) \in \mcx.
\end{equation}
It follows from (\ref{lineariz})
that the bounded linear operator
$\pa_{(\mu,w)}\,F(\lambda,(0,0)):\mcx\to \mcy$ is invertible
whenever $\lambda$  is \emph{not} a solution of \be\lambda^2
nk\coth(nkh)= g+\Upsilon\lambda,\label{bif}\ee for any integer $n\ge1$.
Hence by the Implicit Function Theorem these points are not bifurcation points.
The solutions of (\ref{bif}) are, for any integer $n\ge1$, given by
\begin{equation}
\lambda_{n,\pm}^*=\frac{\Upsilon\tanh(nkh)}{2nk} \pm
\displaystyle\sqrt{\frac{
\Upsilon^2\tanh^2(nkh)}{4n^2k^2}+g\,\frac{\tanh(nkh)}{nk}}.\label{lb6}
\end{equation}
Observe that all of these values are distinct and none of them vanishes.
We claim that $(H_2)$ holds for every
$\la^*\in\{\lambda_{n,\pm}^*:n\in\N\}.$ Indeed, consider any such $\lambda^*$. It follows easily from (\ref{lineariz}) that $
{\mathcal N}(\pa_{(\mu,w)} F(\lambda^\ast,(0,0)))$ is
one-dimensional and generated by $(0, w^*)\in \mcx$, where
$w^*(x)=\cos (nx)$ for all $x\in\bdr$, while ${\mathcal
R}(\pa_{(\mu,w)} F(\lambda^\ast,(0,0)))$ is the closed subspace of
$\mcy$ formed by the elements $(f,c)\in \mcy$ where $c\in\R$ is arbitrary and $f$ satisfies
$$\int_{-\pi}^{\pi} f(x)\,\cos(nx)\,dx=0.$$
 		From (\ref{lb5}), we now compute
$$\pa^2_{\lambda,(\mu,w)}\,F(\lambda^\ast,(0,0)) \,(1,(0,w^\ast))=\left(
4\lambda^\ast n\coth(nkh)-\frac{2\Upsilon}{k}\right)\,w^\ast \not \in {\mathcal
R}(\pa_{(\mu,w)} F(\lambda^\ast,(0,0)))$$ since, using (\ref{bif}),
we have
$$4\lambda^\ast n\coth(nkh)-\frac{2\Upsilon}{k}=\,2\lambda^\ast\,\Big(n\coth(nkh)+\frac{g}{k(\lambda^\ast)^2}\Big)
\neq 0.$$ This shows that $({H_2})$ holds for every $\la^*\in\{\lambda_{n,\pm}^*:n\in\N\}$.

To check the validity of ($H_3$)-($H_4$), we rewrite $F_1$ in (\ref{varan}) in the following form
\begin{align}
 F_1(\lambda, (\mu, w))&=2(\mu+\lb^2-2gw)\,\mcc_{kh}(w')-\,2g \Big\{ \mcc_{kh}(ww')-w\,\mcc_{kh}(w')\Big\} \nonumber
 \\
 &\qquad\qquad- \Upsilon^2\,\Big\{ \mcc_{kh}(w^2w') +w^2\,\mcc_{kh}(w') - 2w\,\mcc_{kh}(ww')\Big\}\nonumber
 \\
 &\qquad\qquad + \displaystyle\frac{\Upsilon^2}{kh}\,w[w^2]  - \displaystyle\frac{\Upsilon^2}{k}\,w^2
+ \frac{\Upsilon^2}{k}\,[w^2]\,+2g\,[w\,\mcc_{kh}(w')] - \frac{2g}{k}\,w - \frac{2\lambda\Upsilon}{k}\,w\,\label{seag}
\\
&=2(\mu+\lb^2-2gw)\,\mcc_{kh}(w')+J(\lambda,w),\non
\end{align}
where we have slightly rearranged the first four terms, and left the others unchanged. Here
\[J(\lambda,w)= J_1(w)+J_2(w)+J_3(\lambda,w),\]
with
\begin{align*}J_1(w)&=-\,2g \Big\{ \mcc_{kh}(ww')-w\,\mcc_{kh}(w')\Big\},\\
J_2(w)&=- \Upsilon^2\,\Big\{ \mcc_{kh}(w^2w') +w^2\,\mcc_{kh}(w') - 2w\,\mcc_{kh}(ww')\Big\},\end{align*}
and $J_3(\lambda,w)$ gathers all the remaining terms in (\ref{seag}).
Since $J_2$ may also be rewritten as
\[
J_2(w)=- \Upsilon^2\,\Big\{ \mcc_{kh}(w(ww'))- w\,\mcc_{kh}(ww')
+w(w\,\mcc_{kh}(w') -\mcc_{kh}(ww'))\Big\},\]
it is a consequence of Lemma \ref{lm} that the continuous nonlinear mappings
$J_1, J_2: C^{2,\alpha}_{2\pi,\circ}(\bdr)\to C^{1,\alpha}_{2\pi}(\bdr)$ map bounded sets of
$C^{2,\alpha}_{2\pi,\circ}(\bdr)$ into bounded sets of  $C^{2,\alpha/2}_{2\pi}(\bdr)$,
and thus into relatively compact subsets of $C^{1,\alpha}_{2\pi}(\bdr)$.
We also rewrite (\ref{meann}) as
\be F_1(\lambda, (\mu, w))=-\frac{\mu}{k^2}+K(\lambda, (\mu, w)).\ee
It then follows that the nonlinear mapping from $X$ into $Y$
\be (\lambda, (\mu, w))\mapsto (J(\lambda, w), K(\lambda, (\mu,w))\ee
maps bounded sets of
$X$ into bounded sets of  $C^{2,\alpha/2}_{2\pi}\times \R$, and thus into relatively compact subsets of $Y$, and is therefore a nonlinear compact operator.
It then follows (see \cite[Lemma 3.1.12]{BT}) that any of its partial derivatives is a linear compact operator.
Note that, for any $(\lambda, \nu,w)\in {\mathcal O}$, one may write
\begin{eqnarray*}
\partial_{(\mu,w)}F_1(\lambda, (\mu, w))(\nu,\varphi)
&=& 2(\mu+\lb^2-2gw)\,\mcc_{kh}(\varphi')-4g\,\mcc_{kh}(w')\varphi \\
&&\qquad\qquad +\,2\,\mcc_{kh}(w')\nu+\partial_{(\mu,w)}J(\lambda, (\mu, w))(\nu,\varphi),\\
\partial_{(\mu,w)}F_2(\lambda, (\mu, w))(\nu,\varphi)&=&-\frac{\nu}{k^2}
+\partial_{(\mu,w)}K(\lambda, (\mu, w))(\nu,\varphi).
\end{eqnarray*}
Since the condition $\mu+\lb^2-2gw(x)>0$, guaranteed by the definition of ${\mathcal O}$,
ensures that the bounded linear operator from $X$ to $Y$ given by
\[(\nu,\varphi)\mapsto \Big(2(\mu+\lb^2-2gw)\,\mcc_{kh}(\varphi'),-\frac{\nu}{k^2}\Big) \]
is invertible, it follows that $\partial_{(\mu,w)}F:X\to Y$ is the sum of an invertible linear operator
and a compact linear operator.   It  is therefore a Fredholm operator of index zero
(see \cite[Theorem 2.7.6]{BT}). Thus assumption $(H_3)$ is indeed satisfied.

We now verify assumption $(H_4)$ for the sequence $({\mathcal Q}_j)_{j\geq 1}$ given by
\begin{equation}\label{quj}
\begin{array}{l}
 {\mathcal Q}_j=\Big\{(\lambda, (\mu,w))\in {\mathcal O}: ||(\lambda, (\mu, w))||\leq j\,,\\[0.2cm]
\qquad \qquad \mu+\la^2-2gw(x)\geq \displaystyle\frac{1}{j}\quad\text{for all }x\in\R\Big\}\,.
\end{array}
\end{equation}
It is obvious that each ${\mathcal Q}_j$ is bounded and closed and that $\cup_{j\in J}{\mathcal Q}_j={\mathcal O}$.
Let $j\in\N$ be arbitrary and consider $(\lambda, (\mu,w))\in {\mathcal Q}_j$ such that $F(\lambda, (\mu,w))=0$.
Then we have in particular that
\[2(\mu+\lb^2-2gw)\,\mcc_{kh}(w')+J(\lambda, w)=0,\]
in the notation defined earlier in the proof. Since $\mu+\lb^2-2gw\geq 1/j>0$, one may invert the linear operator $w\mapsto \mcc_{kh}(w')$ in this equation to get
\[ w=-(\mcc_{kh}\partial_x)^{-1} \left(\frac{1}{\mu+\lb^2-2gw}J(\lambda, w)\right).\]
Combining the bounds ensured by the definition of ${\mathcal Q}_j$ with the commutator estimates satisfied by $J_1$ and $J_2$ that we have used above, we obtain  an uniform upper bound for $w$ in the space $C^{3, \alpha/2}_{2\pi}(\bdr)$. This implies that $\{(\lambda, (\mu, w))\in {\mathcal O}: F(\lambda, (\mu, w))=0\}\cap {\mathcal Q}_j$ is bounded in $\R\times \R\times C^{3, \alpha/2}_{2\pi}(\bdr)$, and therefore compact in $\R\times X$. Thus assumption $(H_4)$ is satisfied by the sequence of sets given by (\ref{quj}).

We have thus checked that the assumptions $(H_1)$--$(H_4)$ are satisfied in the setting (\ref{eqso})-(\ref{fels}). In view of the definition of the set ${\mathcal O}$ in (\ref{mco}), the relation between $(\lambda, \mu, w)$ and $(m, Q, v)$ expressed by
(\ref{cjn}), (\ref{mu}) and (\ref{lamb}), and the definition of the sets ${\mathcal Q}_j$ in (\ref{quj}), the conclusion of Theorem \ref{abstract} can be rephrased in a straightforward way to yield the result claimed by Theorem \ref{glbp}.

\end{proof}

\section{Nodal analysis}
\nequation

The wave profiles corresponding to points lying on the solution curves $\K_{n,\pm}$ present certain
qualitative features. Firstly, they are symmetric since the function space in Theorem 3 is $C_{2\pi, e}^{2,\alpha}({\mathbb R})$.
Furthermore, along any wave except the bifurcating laminar flow, the elevation of the surface wave is strictly
decreasing between the wave crest and a successive wave trough. For waves that are close to the bifurcating
laminar flow, this is a direct consequence of local bifurcation, while away from the bifurcating flow with
a flat free surface this will be proved in what follows by means of a continuation argument.

\begin{lemma}  [Periodicity and Symmetry]                                            \label{mult}
Let $h,\,k>0$ and $\Upsilon \in \bdr$ be given.
For each integer $n\ge1$ and both choices of sign  $\pm$, denote by
\be \mathcal{K}_{n,\pm}=\{(m(s), Q(s), v_s):s\in \R\}\ee
  the continuous curve of solutions of (\ref{sys}) in the space $\R\times \R\times C^{2, \alpha}_{2\pi, e}(\bdr)$ given by Theorem \ref{glbp}. Then the following additional properties hold:
 \begin{itemize}
  \item[(i)] $v_s$ is periodic of period $2\pi/n$, for each $s\in\R$ ;
 \item[(ii)]  $m(-s)=m(s)$, $Q(-s)=Q(s)$, and $v_{-s}(x)= v_s(x+\pi)$ for all $x\in\R$, for each $s\in\R$.
\end{itemize}
 \end{lemma}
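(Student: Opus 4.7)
The plan is to derive both (i) and (ii) from the uniqueness of the global curve $\mathcal{K}_{n,\pm}$ asserted at the end of Theorem \ref{glbp}, applied to two natural modifications of the curve.

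For (i), I would restrict the setup of Section 3 to the closed subspace $\widetilde X := \R \times C^{2,\alpha}_{2\pi/n,\circ,e}(\R)$ of $X$ consisting of $2\pi/n$-periodic functions. All the operators appearing in (\ref{fels}) --- $\mcc_{kh}$, the brackets $[\cdot]$, differentiation, and the bilinear/trilinear terms --- preserve this periodicity, so $F$ restricts to an analytic map $\widetilde F : \widetilde{\mathcal{O}} \to \widetilde Y$ on $\widetilde{\mathcal{O}} := \mathcal{O} \cap (\R \times \widetilde X)$, and the verification of $(H_1)$--$(H_4)$ carries over verbatim from the proof of Theorem \ref{glbp}. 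In the restricted setting the bifurcation equation reads $\lambda^2(mn)k\coth((mn)kh) = g + \Upsilon\lambda$ for $m \ge 1$, whose smallest root is precisely $\lambda^*_{n,\pm}$ with one-dimensional null space spanned by $\cos(nx) \in \widetilde X$. Applying Theorem \ref{abstract} in $\widetilde X$ produces a continuous curve $\widetilde{\mathcal K}$ through $(\lambda^*_{n,\pm},(0,0))$ with exactly the local expansion of Theorem \ref{glbp}(ii). By choosing the exhaustion of $\widetilde{\mathcal{O}}$ to be $\widetilde{\mathcal{Q}}_j := \mathcal{Q}_j \cap (\R \times \widetilde X)$, both alternatives in $(C_5)$ transfer unchanged between the restricted and ambient settings, so $\widetilde{\mathcal K}$ satisfies $(C_1)$--$(C_5)$ also as a curve in $X$. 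The uniqueness clause at the end of Theorem \ref{glbp} then forces $\widetilde{\mathcal K} = \mathcal{K}_{n,\pm}$, and so every $v_s$ has period $2\pi/n$.

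For (ii), I would exploit the translation invariance of the system (\ref{sys}). By (i), $v_s$ is $2\pi/n$-periodic and, being even, is symmetric about every integer multiple of $\pi/n$; in particular about $\pi = n \cdot (\pi/n)$, so $v_s(\cdot+\pi)$ is again even and $2\pi/n$-periodic. Since $\mcc_{kh}$, the brackets, and differentiation all commute with the translation $x \mapsto x+\pi$, the family
\[
\widetilde{\mathcal{K}} := \{(m(s), Q(s), v_s(\cdot+\pi)) : s \in \R\}
\]
is another continuous curve of solutions through the bifurcation point $(m^*_{n,\pm}, Q^*_{n,\pm}, h)$, with local expansion $v_s(x+\pi) = h + (-1)^n s\cos(nx) + o(s)$. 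Reparametrizing by $s \mapsto (-1)^n s$ yields a curve with the precise local behavior of Theorem \ref{glbp}(ii), and the remaining properties (iii)--(vi) are inherited automatically from $\mathcal{K}_{n,\pm}$. The uniqueness clause of Theorem \ref{glbp} then identifies the reparametrized curve with $\mathcal{K}_{n,\pm}$, which gives the claimed identities $m(-s) = m(s)$, $Q(-s) = Q(s)$, and $v_{-s}(x) = v_s(x+\pi)$.

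The chief technical point is verifying that the alternatives $(C_5)$ are genuinely compatible between the restricted and ambient settings, which is what allows the uniqueness clause of Theorem \ref{glbp} to be applied to the newly built curves; this reduces to the fact that $\widetilde{\mathcal{Q}}_j = \mathcal{Q}_j \cap (\R\times\widetilde X)$ and that a closed loop in $\widetilde X$ is a closed loop in $X$. In part (ii) one must additionally check that the translation $v_s \mapsto v_s(\cdot+\pi)$ stays in the space of even functions (which is secured by (i) combined with evenness) and that the local Crandall--Rabinowitz expansion is preserved by the reparametrization, so that the uniqueness identification is valid.
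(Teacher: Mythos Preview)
Your proof is correct and follows essentially the same strategy as the paper's: for (i), both restrict the bifurcation problem to the closed subspace of $2\pi/n$-periodic even functions, re-run Theorem~\ref{abstract} there, and then invoke the uniqueness clause of Theorem~\ref{glbp} to identify the resulting curve with $\mathcal{K}_{n,\pm}$; for (ii), both produce a second curve through the bifurcation point using the shift $x\mapsto x+\pi$ and appeal again to uniqueness. The only presentational difference is that in (ii) the paper directly considers the curve $s\mapsto (m(-s),Q(-s),v_{-s}(\cdot+\pi))$ and checks it satisfies (i)--(v), whereas you first form $s\mapsto (m(s),Q(s),v_s(\cdot+\pi))$ and then reparametrize by $s\mapsto(-1)^n s$; for odd $n$ (in particular $n=1$, the case actually used later) these coincide. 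Your extra care in (i) about choosing $\widetilde{\mathcal Q}_j=\mathcal Q_j\cap(\R\times\widetilde X)$ so that alternative $(C_5)$ transfers between the restricted and ambient settings is a welcome clarification that the paper leaves implicit.
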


 \begin{proof} (i) Fix some integer $n\geq 2$, arbitrary. Consider again equations (\ref{eqso})--(\ref{fels}), but this time in the setting of even functions of period $2\pi/n$, that is, with the spaces $X$ and $Y$ being replaced by $\widetilde X$ and $\widetilde Y$ given by \be\label{fcs0}
\widetilde\mcx=\bdr\times C^{2,\alpha}_{{2\pi/n},\circ, e}(\bdr),\qquad
\widetilde\mcy= C^{1,\alpha}_{{2\pi/n}, \circ,e}(\bdr)\times \R,
\ee
where the notation has the obvious meaning. Then it is immediate to check that Theorem \ref{abstract} is applicable in the new setting, and global real-analytic bifurcation takes place exactly from the local bifurcation points $(\lambda_{np,\pm}^*)_{p\geq 1}$ given by (\ref{lb6}). This leads to the existence of global bifurcation curves $\widetilde{\mathcal K}_{p,\pm}$ for each $p\geq 1$ of solutions to (\ref{sys}) with the properties as in Theorem \ref{glbp}. Since $\widetilde X\subset X$ and $\widetilde Y\subset Y$, it follows that $\widetilde{\mathcal K}_{1,\pm}$ are curves of solutions of (\ref{sys}) in the space $X$ too, with properties analogous to (i)-(v) as satisfied by  ${\mathcal K}_{n,\pm}$. The uniqueness claim in Theorem \ref{glbp} then ensures that $\widetilde{\mathcal K}_{1,\pm}={\mathcal K}_{n,\pm}$, up to reparametrization. It therefore follows that ${\mathcal K}_{n,\pm}\subset \widetilde X$, which is the required result.

(ii) Fix some integer $n\geq 1$, arbitrary. It is easy to check that
\[\R\ni s \mapsto (m(-s), Q(-s), v_{-s}(\cdot+\pi))\]
is a curve of solutions of (\ref{sys}) satisfying the properties (i)-(v) in Theorem \ref{glbp}. The required result is a consequence of the uniqueness claim in Theorem \ref{glbp}.
 \end{proof}

For simplicity, we will concentrate in what follows on discussing qualitative properties of the solutions on the curves ${\mathcal K}_{1,\pm}$. Similar properties (with obvious modifications) may be proven for solutions on the curves ${\mathcal K}_{n,\pm}$ for any integer $n\geq 2$ if one works in the space of functions of period $2\pi/n$ (this choice is justified by Lemma \ref{mult}(i)).
 Also denoting,  for both choices of sign  $\pm$ and for $n=1$,
\be
{\mathcal K}_{1,\pm}= {\mathcal K}_{\pm}^{<}\cup \{(m_{1,\pm}^*, Q_{1,\pm}^*, h)\}\cup {\mathcal K}_{\pm}^{>},\ee
where
\be
{\mathcal K}_{\pm}^{<}:=\{(m(s), Q(s), v_s): s\in (-\infty, 0)\}\quad\text{and}\quad{\mathcal K}_{\pm}^{>}:=\{(m(s), Q(s), v_s): s\in (0,\infty)\},\label{kpm}\ee
it suffices,  because of Lemma \ref{mult}(ii), to study the properties of the solutions on ${\mathcal K}_{\pm}^{>}$.

For any function $v\in C^{2,\alpha}_{2\pi, e}$ with $[v]=h$, we consider in $\overline\mcr_{kh}$ the functions $U$ and $V$ as defined in Section 2.
Then the evenness of $v$ implies that
\be
x\mapsto V(x,y)\text{ is an even function, for each }y\in [-kh, 0]\label{veven}\ee
and that the arbitrary additive constant in the definition of $U$ may be chosen so that
\be
x\mapsto U(x,y)\text{ is an odd function, for each }y\in [-kh, 0].\label{uodd}\ee
				This ensures that (\ref{U}) holds.
It follows in particular that \be\text{$U(0,y)=0$ for all $y\in [-kh, 0]$},\label{sd1}\ee
and as a consequence of (\ref{v}) that
\be
\text{$U(m \pi, y)= \frac{m\pi}{k}$ for all $y\in [-kh, 0]$ and $m\in \mathbb Z$}.\label{sd2}\ee

\begin{lemma}  [Injectivity]        \label{injcond}
If
\be
V_x(x,0)\neq 0\quad\text {for all }x\in (0,\pi),          \label{sd5}\ee
then the injectivity condition (\ref{inj}) is valid if and only if
\be
0<U(x,0)< \frac{\pi}{k}\quad\text{for all }x\in (0,\pi).          \label{sd4}\ee
\end{lemma}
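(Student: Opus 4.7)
The statement is a parity-plus-monotonicity result on the boundary $y=0$. I plan to exploit three structural facts already at hand: the evenness of $V(\cdot,0)=v(\cdot)$ and oddness of $U(\cdot,0)$ from (\ref{veven})--(\ref{uodd}), the quasi-periodicity $U(x+2\pi,0)=U(x,0)+2\pi/k$ combined with the fact that $V$ is $2\pi$-periodic and with the corner values (\ref{sd1})--(\ref{sd2}), and the hypothesis (\ref{sd5}), i.e. $V_x(x,0)=v'(x)\neq 0$ on $(0,\pi)$, which by continuity makes $v$ \emph{strictly} monotonic on $[0,\pi]$ and hence on $[-\pi,0]$ by evenness.

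\textbf{Sufficiency.} Assume (\ref{sd4}). By oddness and the corner values, $U(\cdot,0)$ then maps $[-\pi,\pi)$ into $[-\pi/k,\pi/k)$, so its oscillation across one period is strictly less than $2\pi/k$. If a collision $(U(x_1,0),V(x_1,0))=(U(x_2,0),V(x_2,0))$ with $x_1\ne x_2$ existed, I would shift each $x_i$ by an integer multiple of $2\pi$ into the fundamental domain $[-\pi,\pi)$; the strict oscillation bound forces the two shifts to be equal, so the representatives $\tilde x_1,\tilde x_2\in[-\pi,\pi)$ are still distinct. Strict monotonicity of $v$ on each of $[0,\pi]$ and $[-\pi,0]$, combined with evenness, allows the collision $v(\tilde x_1)=v(\tilde x_2)$ only in the form $\tilde x_2=-\tilde x_1$ with $\tilde x_1\in(-\pi,0)\cup(0,\pi)$ (the boundary cases $\tilde x_1\in\{0,-\pi\}$ are ruled out by strict monotonicity). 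But then oddness of $U$ forces $U(\tilde x_1,0)=-U(\tilde x_1,0)=0$, contradicting (\ref{sd4}).

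\textbf{Necessity and main obstacle.} Conversely, assume (\ref{inj}) and suppose, for contradiction, that (\ref{sd4}) fails at some $x_0\in(0,\pi)$, so that either $U(x_0,0)\le 0$ or $U(x_0,0)\ge \pi/k$. In the first case, since $U(\pi,0)=\pi/k>0$ the intermediate value theorem yields $x_1\in[x_0,\pi)$ with $U(x_1,0)=0$, and then oddness of $U$ together with evenness of $V$ immediately produce the collision $(U,V)(x_1,0)=(U,V)(-x_1,0)$, contradicting injectivity. In the second case, since $U(0,0)=0<\pi/k$ the IVT yields $x_1\in(0,x_0]$ with $U(x_1,0)=\pi/k$, and now the collision must come from reflecting \emph{and} translating: by quasi-periodicity and oddness, $U(2\pi-x_1,0)=U(-x_1,0)+2\pi/k=\pi/k$, while $V(2\pi-x_1,0)=V(-x_1,0)=V(x_1,0)$, so that $(U,V)(x_1,0)=(U,V)(2\pi-x_1,0)$ with $x_1\ne 2\pi-x_1$, again contradicting injectivity. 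The one genuinely non-obvious step is exactly this second collision: it is what makes essential use of the \emph{quasi}-periodic (rather than periodic) nature of $U$, and it is what forces the upper bound $U<\pi/k$ to appear in (\ref{sd4}); a lower bound alone would not suffice.
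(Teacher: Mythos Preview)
Your argument is correct and follows essentially the same route as the paper: both directions rely on the evenness of $V(\cdot,0)$, the oddness and quasi-periodicity of $U(\cdot,0)$, the corner values $U(0,0)=0$, $U(\pi,0)=\pi/k$, and the strict monotonicity of $v$ on $[0,\pi]$ coming from (\ref{sd5}). The only organizational difference is that for sufficiency you first reduce both points to the fundamental domain $[-\pi,\pi)$ (using that $U(\cdot,0)$ takes values in $[-\pi/k,\pi/k)$ there) before invoking the parity collision, whereas the paper first derives $U(x_1,0)=m\pi/k$ directly from $x_2=-x_1+2\pi m$ and then reduces; your version is slightly cleaner and avoids a delicate step in the paper's reduction.
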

\begin{proof}
Suppose first that (\ref{inj}) holds.
If $U(x_0,0)=0$ for some $x_0\in (0,\pi)$, then $(U(x_0,0), V(x_0,0))  =  (0,V(x_0,0))  =  (U(-x_0,0), V(-x_0,0))$,
which contradicts (\ref{inj}).
If $U(x_0,0)=\pi/k$ for some $x_0\in (0,\pi)$, then,
because of (\ref{veven}),(\ref{uodd}) and \eqref{v},
$(U(x_0,0), V(x_0,0))  =  (-U(x_0,0)+\frac{2\pi}{k}, V(x_0,0))
=  (-U(x_0-2\pi,0), V(-x_0,0))  =  (U(2\pi-x_0,0), V(2\pi-x_0,0))$,
which also contradicts  (\ref{inj}).
Hence $0 \ne U(x,0) \ne \pi/k$ for all $x\in(0,\pi)$.
Furthermore, from \eqref{U}, we have
\[
\int_0^\pi U(x,0)\,dx=\frac{1}{k}\frac{\pi^2}{2}, \]
from which it follows that \eqref{sd4} is valid.

For the converse, suppose that $(U(x_1,0), V(x_1,0)) = (U(x_2,0), V(x_2,0))$
for some pair of real numbers  $x_1\ne x_2$.
Since $V(\cdot,0)$ is even and has period $2\pi$, we have
$$ x_2 = - x_1 + 2\pi m \quad \text{ for some integer } m, $$
due to the assumption \eqref{sd5}.
Then the oddness of $U(\cdot,0)$ together with \eqref{v} give us
$U(x_1,0) = U(x_2,0) = -U(x_1-2\pi m,0) = -U(x_1,0) + 2\pi m/k$.
Thus $U(x_1,0) = m\pi/k$.
Writing $x_1 = n\pi + x_0$ for some $n\in\mathbb Z$ and $x+0\in [0,\pi)$,
we have
$$ \frac{m\pi}{k}  = U(n\pi+x_0,0) = U(x_0,0) + \frac{n\pi}{k}.$$
By \eqref{sd4}, $m=n$ and $U(x_0,0)=0$, so that $x_0=0$.
Thus $x_1=m\pi$ and $x_2=-x_1+2\pi m = x_1$, a contradiction.
\end{proof}
We shall see in Lemma \ref{nplo} below that  (\ref{sd5}) and (\ref{sd4}) are
satisfied by the solutions on the bifurcation curve ${\mathcal K}_{1,\pm}$
that are close enough to the trivial solution $v\equiv h$.
Of course, in order for $v$ to give rise to a water wave it is also necessary that
\be
V(x,0)>0\quad\text{for all }x\in\R.\label{sd6}\ee
 				The discussion above leads us to consider the following seven properties of a function $v$:
 \be
 v(x)>0 \text{ for all } x\in \R,\label{as03}\ee
 \be v\not\equiv h, \label{nontr}\ee
 \be v'(x)<0\text{ for all }x\in \left(0,\pi\right),\label{nmbv}\ee
 \be v''(0)<0,\,\, v''\left(\pi\right)>0,\label{nodalloc}\ee
 \be 0< \frac{x}{k}+ \mcc_{kh}(v-h)(x)< \frac{\pi}{k}\text{ for all }x\in \left(0,\pi\right),\label{ga1}\ee
 \be \frac{1}{k}+ \mcc_{kh}(v')(0)>0,\quad \frac{1}{k}+ \mcc_{kh}(v')\left(\pi\right)>0,\label{ga2}\ee
 \begin{equation}\label{aza0}
 \pm \left(\frac{m}{kh} - \frac{\Upsilon}{2kh}[v^2]-\Upsilon\mcc_{kh}(vv') +\Upsilon v\,\Big(\frac{1}{k}+\mcc_{kh}(v')\Big)\right)> 0
\text{ for all } x\in \R.
\end{equation}
Equation \eqref{add} implies that \eqref{aza0} is equivalent to $Q-2gv\ne0$ and
$(v')^2 + (\frac1k + \mcc_{kh} v')^2 \ne 0$.
Because of Lemma \ref{injcond} and the discussion in Section 2,
any solution in ${\mathcal K}_{\pm}^{>}$
that satisfies \eqref{as03}-\eqref{aza0} corresponds to a water wave.

We will study to what extent these properties are satisfied along ${\mathcal K}_{\pm}^{>}$.
To that aim, it is convenient to define the sets
\be {\mathcal V}_{\pm}:=\{(m, Q,v)\in \R\times \R\times C^{2, \alpha}_{2\pi, e}(\bdr): (\ref{as03})-(\ref{aza0}) \text{ hold}\}\ee
the choice of sign in ${\mathcal V}_{\pm}$ being the same as that in (\ref{aza0}).
Note that  ${\mathcal V}_{\pm}$ are {\it open sets} in $\R\times \R\times C^{2, \alpha}_{2\pi, e}(\bdr)$.
The next lemma deals with solutions on ${\mathcal K}_{\pm}^{>}$ which are close to the trivial one.

\begin{lemma}[Local Properties]          \label{nplo}
For either choice of sign in $\pm$, let ${\mathcal K}_{\pm}^{>}=\{(m(s), Q(s), v_s): s\in (0,\infty)\}$ be
the curve of solutions of (\ref{sys}) given by (\ref{kpm}). Then there exists $\ep>0$ sufficiently small such that
 \be
 \{(m(s), Q(s), v_s): s\in (0,\ep)\}\subset {\mathcal V}_{\pm}\quad\text{for all }s\in (0,\ep).\ee
  \end{lemma}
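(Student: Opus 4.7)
The plan is a straightforward continuity argument built on the local expansion $v_s = h + s\cos(x) + o(s)$ in $C^{2,\alpha}_{2\pi,e}(\R)$ supplied by Theorem \ref{glbp}(ii), together with the convergence $(m(s),Q(s)) \to (m_{1,\pm}^*, Q_{1,\pm}^*)$ as $s \to 0^+$. At $s=0$ each of the seven defining conditions of $\mathcal{V}_{\pm}$ holds either strictly or in a form that is stable under a $C^{2,\alpha}$-small perturbation, so, since $\mathcal{V}_{\pm}$ is open in $\R\times\R\times C^{2,\alpha}_{2\pi,e}(\R)$, the result will follow once each condition is verified in turn.

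Here is how I would dispatch each condition. Conditions \eqref{as03} and \eqref{nontr} are immediate from $v_s \to h > 0$ uniformly and the non-triviality of the first-order term $s\cos(x)$. Condition \eqref{nodalloc} reads off from $v_s''(0) = -s + o(s) < 0$ and $v_s''(\pi) = s + o(s) > 0$, valid by the $C^{0,\alpha}$ convergence of second derivatives. For \eqref{ga2}, the boundedness of $\mcc_{kh}$ on Hölder spaces (Appendix A) yields $\mcc_{kh}(v_s') = o(1)$ uniformly, hence $\tfrac{1}{k} + \mcc_{kh}(v_s')(x) > 0$ throughout $\R$. For \eqref{ga1}, the same boundedness gives $\mcc_{kh}(v_s - h) = o(1)$ in $C^{0}$; combined with $U_s(0,0)=0$ and $U_s(\pi,0)=\pi/k$ from \eqref{sd1}--\eqref{sd2}, and with $U_{s,x}(x,0) = \tfrac{1}{k} + \mcc_{kh}(v_s')(x) > 0$ just proved, the map $x\mapsto U_s(x,0)$ is a strict increasing bijection $[0,\pi]\to[0,\pi/k]$ for small $s$, giving \eqref{ga1}. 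For \eqref{aza0}, evaluation at the trivial solution $v\equiv h$, $v'\equiv 0$ collapses the bracket to $\lambda_{1,\pm}^*/k$, where $\lambda_{1,+}^* > 0 > \lambda_{1,-}^*$ by the explicit formula \eqref{lb6} (the radical dominates the linear term because $g\tanh(kh)/k > 0$); continuity preserves the strict sign for small $s > 0$.

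The main obstacle is \eqref{nmbv}, which demands \emph{strict} negativity of $v_s'(x)$ on the \emph{open} interval $(0,\pi)$. The difficulty is that evenness and $2\pi$-periodicity of $v_s$ force $v_s'(0) = v_s'(\pi) = 0$, so uniform smallness of $v_s' - (-s\sin x)$ alone does not yield a pointwise strict bound near the endpoints. My plan is to divide by $\sin(x)$: writing $v_s'(x) = -s\sin(x) + g_s(x)$ with $\|g_s\|_{C^{1,\alpha}} = o(s)$, the function $g_s$ is odd and satisfies $g_s(0) = g_s(\pi) = 0$, so a first-order Taylor expansion at the endpoints yields $|g_s(x)| \le C \|g_s\|_{C^1} \min(x,\pi-x)$ in neighborhoods of $0$ and $\pi$, while $\sin(x)$ is bounded below away from the endpoints. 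Combining these estimates gives $\sup_{x\in(0,\pi)} |g_s(x)/\sin(x)| \le C' \|g_s\|_{C^{1,\alpha}} = o(s)$, so that $v_s'(x)/\sin(x) = -s + o(s)$ uniformly on $(0,\pi)$, which is strictly negative for all small $s > 0$. Taking $\ep$ to be the minimum of the finitely many smallness thresholds accumulated above then completes the proof.
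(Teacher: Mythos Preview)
Your proposal is correct and follows essentially the same continuity argument as the paper. The only difference is packaging: where you handle \eqref{nmbv} by explicitly dividing $v_s'(x)=-s\sin(x)+g_s(x)$ through by $\sin(x)$ and bounding the quotient using the $C^1$ smallness of $g_s$, the paper achieves the same thing more abstractly by introducing the rescaled function $u_s=(v_s-h)/s$, noting that $u_s\to\cos(x)$ in $C^{2,\alpha}_{2\pi,e}$, and observing that the set $\mathcal{Y}=\{u:u\not\equiv 0,\ u'<0\text{ on }(0,\pi),\ u''(0)<0,\ u''(\pi)>0\}$ is open and contains $\cos(x)$. The openness of $\mathcal{Y}$ relies on exactly the endpoint Taylor argument you spell out, so the two approaches are equivalent; yours is more explicit, the paper's is slightly more concise.
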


\begin{proof}

Since the set of $(m, v)$ defined by the conditions (\ref{as03}), (\ref{ga1}), (\ref{ga2}), and (\ref{aza0}) is open $ \R\times C^{2, \alpha}_{2\pi, e}(\bdr)$, and the mapping $s\mapsto (m(s), v_s)$ is continuous from $\R$ into $\R\times C^{2, \alpha}_{2\pi, e}(\bdr)$,
the fact that those conditions are satisfied at $s=0$ implies that they are satisfied on $(-\varepsilon_1, \varepsilon_1)$, for some $\varepsilon_1>0$ sufficiently small.
Similarly, since the set
\[{\mathcal Y}:=\{u\in C^{2, \alpha}_{2\pi, e}(\bdr): u\not\equiv 0, u'(x)<0\text{ for all }x\in (0,\pi), u''(0)<0, u''(\pi)>0\}\]
 is open in $C^{2, \alpha}_{2\pi, e}(\bdr)$ and the mapping
\[s\mapsto u_s:=
\left\{\begin{array}{l}
\displaystyle    \left(v_s -h\right)/s  \quad\,\,\,\text{ for }s\neq 0,\\
(x\mapsto \cos x)\quad\text{ for }s= 0,
\end{array}\right.
\]
is continuous from $\R$ into $C^{2, \alpha}_{2\pi, e}(\bdr)$, the fact that $u_0\in {\mathcal Y}$ implies that $u_s\in {\mathcal Y}$ for all $s\in (-\varepsilon_2, \varepsilon_2)$, for some $\varepsilon_2>0$ sufficiently small; it follows that $v_s$ satisfies (\ref{nontr})--(\ref{nodalloc}) for all $s\in (0,\varepsilon_2)$. Setting $\varepsilon:=\min\{\varepsilon_1, \varepsilon_2\}$ yields the required result.

\end{proof}

Fix some $\ep>0$ given by Lemma 3, and let us define ${\mathcal K}_{\pm, \textnormal{loc}}^{>}:=\{(m(s), Q(s), v_s):s\in (0,\ep)\}$. By reparametrizing ${\mathcal K}_{\pm, \textnormal{loc}}^{<}$ such that Proposition \ref{mult}(ii) holds, let us define also ${\mathcal K}_{\pm, \textnormal{loc}}^{<}:=\{(m(s), Q(s), v_s):s\in (-\ep, 0)\}$.
The main result of this subsection is the following theorem.

\begin{theorem}[Global Continuation]                   \label{nodalthm}
For either choice of sign $\pm$, let ${\mathcal K}_{\pm}^{>}=\{(m(s), Q(s), v_s): s\in (0,\infty)\}$ be the curve of solution of (\ref{sys}) given by (\ref{kpm}). Then one of the following alternatives occurs:
\begin{itemize}
\item[$(A_1)$] ${\mathcal K}_{\pm}^{>}\subset {\mathcal V}_{\pm}$, in which case alternative $(\alpha)$ in Theorem \ref{glbp} occurs;
\item[$(A_2)$] there exists $s^* \in (0,\infty)$ such that $\{(m(s), Q(s), v_s): s\in (0,s^*)\}\subset {\mathcal V}_{\pm}$, while
$(m(s^*), Q(s^*), v_{s^*})$ satisfies (\ref{as03}), (\ref{nontr}), (\ref{nmbv}), (\ref{nodalloc}),
(\ref{ga2}) and (\ref{aza0}), while instead of (\ref{ga1}) it satisfies
\begin{subequations}\label{ga}
\begin{align} 0< \frac{x}{k}+ \mcc_{kh}(v-h)(x)&\leq \frac{\pi}{k}\text{ for all }x\in \left(0,\pi\right),\label{ga3}\\
 \frac{x_0}{k}+ \mcc_{kh}(v-h)(x_0)&=\frac{\pi}{k}\quad\text{for some }x_0\in (0,\pi).\label{ga4}\end{align}
 \end{subequations}
\end{itemize}
\end{theorem}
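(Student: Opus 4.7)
The natural approach is to set
\[
s^\ast := \sup\{s>0 : (m(t),Q(t),v_t)\in\mcv_\pm\text{ for all }t\in(0,s)\},
\]
which is strictly positive by Lemma \ref{nplo}. I would then split into the two cases $s^\ast=\infty$ and $s^\ast<\infty$ and match them with the two alternatives $(A_1)$, $(A_2)$.

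\textbf{Case $s^\ast=\infty$.} The whole forward arc lies in $\mcv_\pm$. By Theorem \ref{glbp} the curve satisfies either alternative $(\alpha)$ or $(\beta)$. In the loop alternative $(\beta)$ there exists $T>0$ with $(m(T),Q(T),v_T)=(m(0),Q(0),v_0)=(m^\ast_{1,\pm},Q^\ast_{1,\pm},h)$, which would force $v_T\equiv h$, directly contradicting condition \eqref{nontr} which is part of membership in $\mcv_\pm$. Hence only $(\alpha)$ remains, giving $(A_1)$.

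\textbf{Case $s^\ast<\infty$.} By continuity of $s\mapsto (m(s),Q(s),v_s)$ in $\R\times\R\times C^{2,\al}_{2\pi,e}(\R)$ and openness of $\mcv_\pm$, the limit point $(m(s^\ast),Q(s^\ast),v_{s^\ast})$ lies in $\overline{\mcv_\pm}\setminus\mcv_\pm$, so every condition \eqref{as03}--\eqref{aza0} holds in the non-strict (limiting) sense while at least one degenerates to an equality. The whole task is to show that among \eqref{as03}--\eqref{aza0} only \eqref{ga1} may degenerate, and that it degenerates precisely in the form \eqref{ga}. I would handle each property in turn, always exploiting that $V_{s^\ast}$, defined from $v_{s^\ast}$ via \eqref{VV}, is harmonic in $\mcr_{kh}$, with known boundary values, so maximum-principle arguments apply.

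\textbf{Ruling out degeneracy of the individual conditions.} For \eqref{nontr}, if $v_{s^\ast}\equiv h$ then $(m(s^\ast),Q(s^\ast),v_{s^\ast})$ would sit on $\mck_{triv}$; by the local uniqueness in Theorem \ref{glbp}(iii) and the real-analytic structure \emph{(v)}, a non-trivial arc can meet $\mck_{triv}$ only at one of the bifurcation points $(m^\ast_{n,\pm},Q^\ast_{n,\pm},h)$, but at such points $\mck_{\pm}^{>}$ is locally characterized by the expansion \emph{(ii)} of Theorem \ref{glbp}, contradicting having just left $\mcv_\pm$ through \eqref{nontr}. For \eqref{nmbv}--\eqref{nodalloc} I would apply the strong maximum principle and Hopf lemma to the harmonic function $V_x$ on the half strip $\{0<x<\pi,\,-kh<y<0\}$ (using evenness of $V$ in $x$ and Lemma \ref{mult} to get the correct boundary vanishing on $x=0,\pi$), together with boundary differentiation of the pseudo-differential identity \eqref{add} to derive the sign of $V_{xx}$ at the crest and trough; the limit condition $v'_{s^\ast}(x_0)=0$ at some interior $x_0\in(0,\pi)$ would force $V_x\equiv0$, hence $v_{s^\ast}\equiv h$, which is excluded by the previous step. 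For \eqref{ga2} one applies Hopf's lemma to $V$ itself at the corners $(0,0)$ and $(\pi,0)$ to conclude $V_y>0$ there. For \eqref{as03} (positivity of $v$) one uses that $V$ is harmonic, nonnegative on $\partial\mcr_{kh}$ (since limiting $v_{s^\ast}\geq 0$), and vanishes on $y=-kh$; if $V(x_0,0)=0$ then by the Hopf lemma $V_y(x_0,0)<0$, but this contradicts the sign information just obtained from \eqref{vzc} and the limiting \eqref{aza0}. For \eqref{aza0} I would observe that the signed quantity $\Sigma(x):=\frac{m}{kh}-\frac{\Upsilon}{2kh}[v^2]-\Upsilon\mcc_{kh}(vv')+\Upsilon v(1/k+\mcc_{kh}(v'))$ is exactly $\zeta_y+\Upsilon V V_y$ on $y=0$ (by \eqref{vzc} and \eqref{zeta_x}); by \eqref{gc2}, $\Sigma$ vanishes at a point iff $Q-2gv$ vanishes there, and either of these possibilities corresponds to reaching a wave of greatest height or a stagnation point on $\mcs$, which is the content of \emph{alternative} $(\alpha)$ in Theorem \ref{glbp}, not of the loss of injectivity we are isolating. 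A direct argument is therefore that $\Sigma$ keeps its sign by a boundary-point argument applied to $\zeta$ in $\mcr_{kh}$, analogous to the one used for $V_x$.

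\textbf{Identification of the failing condition.} Once \eqref{as03}--\eqref{nodalloc}, \eqref{ga2} and \eqref{aza0} are shown to persist strictly through $s^\ast$, the only remaining strict inequality is \eqref{ga1}. By continuity, at $s^\ast$ the function $U_{s^\ast}(x,0)=x/k+\mcc_{kh}(v_{s^\ast}-h)(x)$ still satisfies $0\leq U_{s^\ast}(x,0)\leq \pi/k$ on $(0,\pi)$, and must touch one of the endpoints. Differentiating and using $(U_{s^\ast})_x(x,0)=V_y(x,0)>0$ (which is known on all of $\R$ since \eqref{ga2} has been upgraded to everywhere-positivity via a maximum-principle argument for the harmonic function $V_y$), the value $0$ cannot be attained at an interior point without forcing $(U_{s^\ast})_x$ to change sign, which is impossible. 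Hence $U_{s^\ast}(x_0,0)=\pi/k$ for some $x_0\in(0,\pi)$, yielding \eqref{ga3}--\eqref{ga4}.

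\textbf{Main obstacle.} The genuinely delicate step is the maximum-principle bookkeeping that rules out interior zeros of $V_x$ (equivalently of $v_{s^\ast}'$) on $(0,\pi)$ simultaneously with the preservation of \eqref{aza0}; both of these require comparing the behavior of harmonic functions in $\mcr_{kh}$ with Cauchy data built from $v_{s^\ast}$ and with signs transported along a one-parameter family. This is the ``intricate series of arguments based on a maximum principle that keeps track of the possible nodes of $Y$'' alluded to in the introduction, and it is where the bulk of the work lies; everything else is an application of continuity, openness of $\mcv_\pm$, and the dichotomy of Theorem \ref{glbp}.
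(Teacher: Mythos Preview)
Your overall architecture matches the paper's: define $s^\ast$ as the first exit time from $\mcv_\pm$, dispose of the loop alternative in the case $s^\ast=\infty$, and in the case $s^\ast<\infty$ show that every defining inequality of $\mcv_\pm$ persists strictly at $s^\ast$ except \eqref{ga1}. However, two of your proposed arguments have genuine gaps.

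\textbf{The monotonicity step \eqref{nmbv}--\eqref{nodalloc}.} You claim that an interior zero $v'_{s^\ast}(x_0)=0$ would force $V_x\equiv 0$ by the strong maximum principle. It does not. On the rectangle $\mcr=\{0<x<\pi,\ -kh<y<0\}$ the harmonic function $V_x$ vanishes on three sides and is $\le 0$ on the top; an interior boundary zero only gives, via Hopf, that $V_{xy}(x_0,0)>0$, and nothing in your outline contradicts this. The paper's device is to replace $V_x$ by the harmonic function
\[
f=\frac{V_x\zeta_y-V_y\zeta_x}{V_x^2+V_y^2},
\]
which is the physical vertical velocity $-\psi_X$. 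One checks $f=0$ on three sides of $\mcr$ and, using \eqref{gc2}, that $f$ has a definite sign on the top; thus a zero of $v'$ at $x_0\in(0,\pi)$ makes $(x_0,0)$ a boundary extremum of $f$, so Hopf forces $f_y(x_0,0)\neq 0$. A direct computation from \eqref{gc2} (differentiated once) then shows $f_y(x_0,0)=0$, the desired contradiction. The endpoint statements $v''(0)<0$, $v''(\pi)>0$ require the Serrin corner-point lemma applied to $f$, with second derivatives of $f$ computed from the twice-differentiated \eqref{gc2}. For this argument one first needs $V_x^2+V_y^2>0$ on all of $\overline{\mcr_{kh}}$, which the paper obtains by passing to the limit from the genuine conformal maps at parameters $s<s^\ast$; you do not mention this prerequisite.

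\textbf{The lower bound in \eqref{ga3}.} You assert that $V_y(x,0)=U_x(x,0)>0$ for all $x$, upgraded from \eqref{ga2} by a maximum-principle argument. This is false in general: the whole point of the setting is to allow overhanging profiles, for which $U_x(\cdot,0)$ changes sign. The paper instead argues as follows: if $U(x_0,0)=0$ for some $x_0\in(0,\pi)$, then the harmonic function $U$ on $\mcr$ attains its global minimum at $(x_0,0)$, so Hopf gives $U_y(x_0,0)<0$, i.e.\ $V_x(x_0,0)>0$, contradicting the already-established strict inequality \eqref{nmbv}. Note the logical order: \eqref{nmbv} must be proved first, and then feeds into \eqref{ga3}.

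Your treatment of \eqref{nontr} is also too sketchy: the paper rules out landing at a bifurcation point $(m_{n,\pm}^*,Q_{n,\pm}^*,h)$ with $n\ge 2$ using the $2\pi/n$-periodicity from Lemma~\ref{mult}(i) (incompatible with $v'\le 0$ on $(0,\pi)$), rules out the opposite sign via \eqref{aza0}, and only then invokes the real-analytic structure to exclude a return to $(m_{1,\pm}^*,Q_{1,\pm}^*,h)$.
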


\begin{remark} {\rm

(i) Roughly speaking, alternative $(A_1)$ means that all the solutions on ${\mathcal K}_{\pm}^{>}$ correspond
to physical water waves that are symmetric and whose vertical coordinate strictly decreases between each
of its consecutive global maxima and minima, which are unique per minimal period.
However, since we make no claim that
the horizontal coordinate is strictly monotone, such waves could have overhanging profiles. The existence of
overhanging waves is strongly suggested by numerical simulations and remains an important open problem.
More generally, the behaviour of the
solutions on ${\mathcal K}_{\pm}^{>}$ as $s\to \infty$ remains another important open problem.

(ii) Alternative $(A_2)$ means that solutions on  ${\mathcal K}_{\pm}^{>}$ that correspond
to physical water waves with qualitative properties as described above do exist until a limiting configuration
with a profile that self-intersects on the line strictly above the trough is reached at $s=s^*$. Indeed, recall that $u(x)=U(x,0)$.  By (\ref{U}), (\ref{ga4}) and (\ref{sd2}),
$u(x_0) = x_0/k + \C_{hk}(v-h)(x_0) = \pi/k = u(\pi)$.
This implies that the physical point $(u(x_0),v(x_0)) \in \S$ lies directly
above the trough (which is at $(u(\pi),v(\pi))$).
By (\ref{v}) and (\ref{uodd}), $u(2\pi-x_0) = u(-x_0) + 2\pi/k = 2\pi/k - u(x_0) =
u(x_0)$, while  (\ref{v}) and (\ref{veven}) yield $v(2\pi-x_0) = v(-x_0) = v(x_0)$.
Thus the curve $\S$ intersects itself at the physical point $(u(x_0),v(x_0))$,
which lies directly above the trough.}
\end{remark}

\begin{proof}[Proof of Theorem 5]
 Fix a choice of sign in $\pm$. In what follows we choose the $+$ sign merely for definiteness.
 All the arguments below can be straightforwardly adapted to the choice of the $-$ sign.
 For convenience, define $I=\{s\in (0,\infty): (m(s), Q(s), v_s)\in {\mathcal V}_+\}$.
 Since ${\mathcal V}_+$ is an open set in  $\R\times \R\times C^{2, \alpha}_{2\pi, e}(\bdr)$, it follows that $I$ is an open subinterval of $(0,\infty)$.
 In case $I$ equals all of $(0,\infty)$, the definition of $\mathcal V_+$ implies
 ${\mathcal K}_{+}^{>}\subset {\mathcal V}_{+}$,
 so that $(\beta)$ in Theorem \ref{glbp} is excluded and therefore $(\alpha)$ is valid.

 Hence we are left with the case that the open interval $I$ is not the whole of $(0,\infty)$.
With $\ep>0$ given by Lemma \ref{nplo}, we have $(0,\ep)\subset I$.
Let $s^*$ be the upper end-point of the largest interval that contains $(0, \ep)$ and  is contained in $I$.
Then $(0,s^*)\subset I$ and $s^*\notin I$.
In what follows we shall investigate the properties of the solution $(m(s^*), Q(s^*), v_{s^*})$.

We first claim that necessarily $v_{s^*}\not\equiv h$.
Suppose on the contrary that $v_{s^*}\equiv h$.
Then the point $(m(s^*), Q(s^*), h)$ belongs to ${\mathcal K}_{triv}$ and is a limit of a sequence of nontrivial solutions.
It follows from Theorem \ref{glbp} that necessarily there exists an integer $n\geq 1$ and a choice of sign in $\pm$ such that $m(s^*)=m_{n,\pm}^*$ and $Q(s^*)=Q_{n,\pm}^*$, where $m_{n,\pm}^*$ and $Q_{n,\pm}^*$ are given by (\ref{lb7}) and (\ref{qlk7}).
At this point note that it is a consequence of Theorem \ref{glbp}(iv) and the periodicity in
Lemma \ref{mult}(i) that,  for any $n\geq 2$,
all nontrivial solutions in a neighbourhood of $(m_{n,\pm}^*, Q_{n,\pm}^*, h)$
are periodic of period $2\pi/n$.
By \eqref{nmbv}, $v'_{s^*} \le 0$ for $0\le x\le \pi$.  Thus $n=1$.
It follows that either
$(m(s^*), Q(s^*)=(m_{1,+}^*, Q_{1,+}^*)$ or $(m(s^*), Q(s^*)=(m_{1,-}^*, Q_{1,-}^*)$.
However the possibility that $(m(s^*), Q(s^*)=(m_{1,-}^*, Q_{1,-}^*)$ is ruled out by the fact
proved in Lemma \ref{nplo} that all nontrivial solutions in a neighbourhood of $(m_{1,-}^*, Q_{1,-}^*, h)$
satisfy (\ref{aza0}) with the minus sign, combined with the fact that all solutions on  ${\mathcal K}_{+}^{>}$
satisfy (\ref{aza0}) with the plus sign.
Thus the only remaining possibility is that $(m(s^*), Q(s^*)=(m_{1,+}^*, Q_{1,+}^*)$.

Now all the nontrivial solutions in a neighbourhood of $(m_{1,+}^*, Q_{1,+}^*, h)$ belong to
${\mathcal K}_{+}^{>} \cup {\mathcal K}_{+}^{<}$.
By the evenness and periodicity,  the solutions on
${\mathcal K}_{+,\textnormal{loc}}^{<}$  satisfy the opposite of (\ref{nmbv}),
so that ${\mathcal K}_{+}^{<}$ is excluded.
Hence there exists $\delta_1, \delta_2>0$ sufficiently small such that
$\{(m(s), Q(s), v_s):s\in (0,\delta_1)\}= \{(m(s), Q(s), v_s): s\in (s^*-\delta_2, s^*)\}$.
However, this possibility is ruled out by $(C_4)$ in Theorem \ref{abstract}, that is, by
the construction of the real-analytic global bifurcation curve in \cite{BT}; see Figure 3.
We have thus proven that $v_{s^*}\equiv h$ is not possible, which proves the claim.

\begin{figure} \centering  \includegraphics[width=10cm]{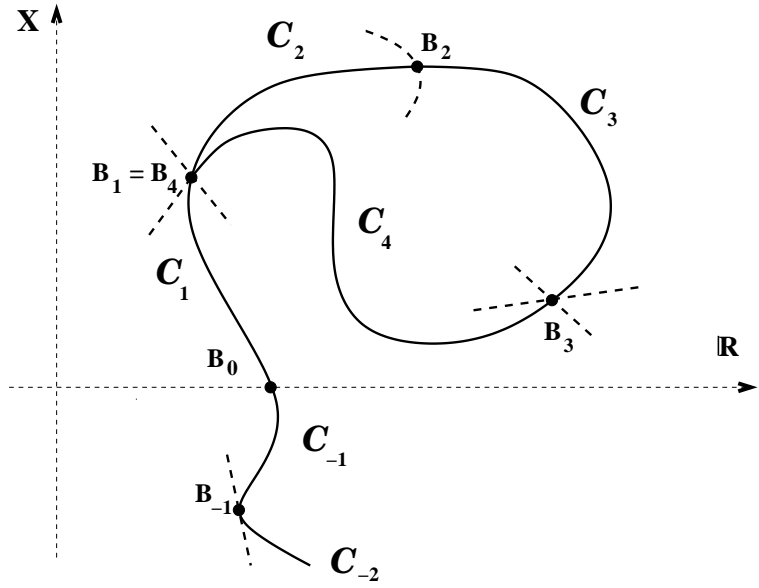}
\caption{\small The
possibility of a loop is eliminated by a nodal pattern analysis.}
\end{figure}

 For notational simplicity, throughout the remainder of the proof we shall denote $(m(s^*), Q(s^*), v_{s^*})$
 by $(m,Q, v)$.   This is a limit of solutions satisfying \eqref{as03} to \eqref{aza0}.
 The definition of $I$ implies that
\be v(x)\geq 0 \text{ for all }x\in\R, \text {and is even of period } 2\pi,    \label{as5}\ee
\be v'(x)\leq 0\text{ for all }x\in [0,\pi], \text{ so that } v'\ge 0 \text { in } [\pi,2\pi],  \label{as1}\ee
\be 0\leq \frac{x}{k}+ \mcc_{kh}(v-h)(x)\leq \frac{\pi}{k}\ \text{ for all }x\in [0,\pi],        \label{as2}\ee
\begin{equation}\label{aza}
 +\left(\frac{m}{kh} - \frac{\Upsilon}{2kh}[v^2]-\Upsilon\mcc_{kh}(vv') +\Upsilon v\,\Big(\frac{1}{k}+\mcc_{kh}(v')\Big)\right)\geq 0
\text{ for all } x\in \R.\end{equation}
By \eqref{U},  (\ref{as2}) may be rewritten as
\be 0= U(0,0)\leq U(x,0)\leq U(\pi,0)=\pi/k \quad\text{for all }x\in [0,\pi]\label{as22}.\ee
Also, by \eqref{vzc} with $\zeta$ defined as in Section 2, (\ref{aza}) may be rewritten as
\be
\zeta_y(x,0)+\Upsilon V(x,0) V_y(x,0)\
\geq 0 \quad\text{for all }x\in \R.
\label{as6}\ee
By Theorem \ref{equiv}, $v$ also satisfies (\ref{add}), which is the same as (\ref{gc2}).

We are now going to show that $(m,Q,v)$ satisfies the remaining properties claimed in the statement of
alternative $(A_2)$ of Theorem \ref{nodalthm}.
The proof will be based on sharp forms of maximum principles in the infinite strip $\mcr_{kh}$, as well as in the rectangular domain
\be
\mcr:=\{(x,y): 0<x<\pi,\  -kh<y<0\}, \ee
whose boundary $\partial \mcr$ is the union of the four line segments
\begin{align*}&\partial\mcr_t=\{(x,0):0\leq x\leq \pi\},\qquad \partial\mcr_b=\{(x,-kh):0\leq x\leq \pi\},\\
&\partial\mcr_l=\{(0,y):-kh\leq y\leq 0\},\qquad\partial\mcr_r=\{(0,y):-kh\leq y\leq 0\},\end{align*}
see Figure 4.

\begin{figure}\label{rect}    \begin{center}$
\includegraphics[width=2.25in]{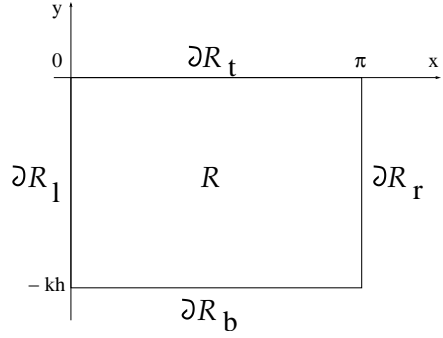}$   \end{center}
\caption{The rectangular domain ${\mathcal R}$, the conformal image of which
is the fluid domain between the wave crest and a successive wave trough.}  \end{figure}

Suppose on the contrary that (\ref{as03}) fails. Then (\ref{as5}) and (\ref{as1}) imply that $v(\pi)=0$. Since then the harmonic function $V$ in $\mcr_{kh}$ has a global minimum in $\overline{\mcr_{kh}}$  at $(\pi, 0)$, by the Hopf boundary-point lemma, $V_y (\pi,0)<0$.
On the other hand, it is a consequence of (\ref{as22}) and the Cauchy-Riemann equations that
$V_y(\pi,0)= U_x(\pi, 0)\geq 0$, which is a contradiction. It follows that (\ref{as03}) does hold.

 We now prove the strict inequality (\ref{aza0}) with the $+$ sign; that is,
\begin{equation}\label{aza00}
\zeta_y + \Upsilon VV_y \Big|_{y=0}  =
+\left(\frac{m}{kh} - \frac{\Upsilon}{2kh}[v^2]-\Upsilon\mcc_{kh}(vv')
+\Upsilon v\,\Big(\frac{1}{k}+\mcc_{kh}(v')\Big)\right)> 0
\text{ for all } x\in \R.
\end{equation}
Recalling from Theorem \ref{glbp}(i) that
\be Q-2gv(x)>0 \text{ for all }x\in\R, \label{qnm}\ee
and taking  (\ref{gc2}) into account, the validity of (\ref{aza00}) is equivalent to
$$(\zeta_y\,+\,\Upsilon VV_y)^2=(Q-2gV)(V_x^2+V_y^2) > 0 \text { for } y=0 $$
or equivalently
\be
v'(x)^2 +\left(\frac{1}{k}+\mcc_{kh}(v')(x)\right)^2>0\text{ for all } x\in \R,\label{as4}\ee
or also equivalently
\be V_x^2(x,0)+V_y^2(x,0)>0\quad\text{for all }x\in\R.\label{gds}\ee

We will prove \eqref{gds} by contradiction.  Suppose that there exists a point $x_0\in \R$  such that
$V_x^2(x_0, 0)+V_y^2(x_0,0)=0$.
It follows from (\ref{gc2}) that $\zeta_y(x_0,0)+\Upsilon V(x_0,0) V_y(x_0,0)=0$.
Combined with (\ref{as6}), this ensures that
$\zeta_y + \gamma VV_y = O((x-x_0)^2)$  as $x\to x_0$ at $y=0$.
By (\ref{gc2}) it implies that $V_x^2+V_y^2 = O((x-x_0)^4)$ as $x\to x_0$ at $y=0$.
Examining the second derivative of the expression in the right-hand side of (\ref{gc2}), we see that
\be
V_{xx}(x_0,0)=V_{xy}(x_0,0)=0.\label{as7}\ee
Because of the evenness and periodicity of $v$, we may assume with no loss of generality that $0\le x_0 \le \pi$.

If $x_0=0$, it follows from \eqref{as1} that the harmonic function $V$ in $\mcr_{kh}$ has at $(0, 0)$
a global maximum in $\overline{\mcr_{kh}}$.
By  the Hopf boundary-point lemma, $V_y (0,0)>0$.   Hence
\be \frac{1}{k}+\mcc_{kh}(v')(0)   =V_y(0,0)     > 0, \ee
a contradiction.  So $x_0\ne0$.
Now suppose $0<x_0<\pi$.
 The harmonic function $V_x$ has its global maximum in $\overline{\mcr}$ at $(x_0,0)$
 because $V_x=0$ on three sides (by oddness and the bottom boundary condition)
 while $V_x\le0$ on the top $\partial\mcr_t$ by \eqref{as1}.
 By the Hopf boundary-point lemma,  $V_{xy}(x_0,0)>0$, which contradicts (\ref{as7}).

It remains to consider the case $x_0=\pi$.
Since $x\mapsto V(x,y)$ is even about $\pi$, for each $y\in [-kh,0]$, it follows that
\be
V_{xxx}(\pi,0)= V_{xyy}(\pi, 0)=0.\label{as8}\ee
Now the harmonic function $V_x$ in the rectangle $\mcr=\{(x,y): 0<x<\pi,\  -kh<y<0\}$
has its global maximum in $\overline \mcr$  at $(\pi, 0)$.
It follows from the Serrin corner-point lemma, by taking into account (\ref{as7}) and (\ref{as8}),
that $V_{xxy}(\pi,0)<0$.
				By the Cauchy-Riemann equations, $U_{xxx}(\pi,0)<0$.
Similarly, we also deduce  that $U_x(\pi, 0)=U_{xx}(\pi, 0)=0$,
since $V_y(\pi,0)=0$ by assumption and $V_{xy}(\pi, 0)=0$ by (\ref{as7}).
These properties of the derivatives of the function $U(\cdot,0)$ at $(\pi,0)$
imply that
$U(x,0) - U(\pi,0) = \frac16 U_{xxx}(\pi,0) (x-\pi)^3  +  O(x-\pi)^4  >  0$  as $x\nearrow \pi$.
This contradicts (\ref{as22}).

Thus $x_0$ does not exist and \eqref{gds} is true.    Since $V_x(\pi, 0)=0$, we must have $V_y(\pi,0)\neq 0$.
It is a consequence of (\ref{as22}) and the Cauchy-Riemann equations that $V_y(\pi,0)=U_x(\pi,0)\geq 0$.
It follows that
\be \frac{1}{k}+\mcc_{kh}(v')(\pi)  =  V_y(\pi,0)  > 0. \ee
We have therefore proved not only (\ref{aza0}), but also (\ref{ga2}) and (\ref{as4}).
It remains to prove \eqref{nmbv}, \eqref{nodalloc} and \eqref{ga}.

Recall at this point that the solution $(m, Q, v)$ that we are discussing
(more precisely, $(m(s^*), Q(s^*), v_{s^*})$) is the limit in $\R\times\R\times C^{2, \alpha}_{2\pi, e}(\bdr)$
of a sequence $(m^j, Q^j, v^j):= (m(s_j), Q(s_j), v_{s_j})$ where $s_j\in I$ for all $j\in\N$
and $s_j\nearrow s^*$ as $j\to\infty$.
For each $j\in\N$, let $V^j$ and $U^j$ be the associated harmonic functions in ${\mathcal R}_{kh}$.
The definition of $I$ means that, for each $j\in\N$, we have that $(m^j, Q^j, v^j)\in {\mathcal V}_{+}$,
which implies by Lemma \ref{injcond} that
the mapping  $x\mapsto (U^j(x,0), V^j(x,0))$ is injective on $\R$.
As noted in \cite{V2},\cite{CV}, a suitable application of the Darboux--Picard Theorem
\cite[Corollary 9.16, p.\ 3]{Burk} yields that $U^j+iV^j$ is a conformal mapping between
the strip $\mcr_{kh}$ and the domain $\Omega$ whose boundary consists of the curve
\[
\mcs^j:=\{(U^j(x,0), V^j(x,0)):x\in\R\}\]
and the real axis $\mcb$, that extends continuously from $\overline\mcr_{kh}$ to $\overline\Omega$.

As a consequence,
\[
(V^j_x)^2+ (V^j_y)^2>0 \quad\text{in }\mcr_{kh}\cup\{(x, -kh):x\in\R\}. \]
On $\{(x, -kh):x\in\R\}$ we have that $V^j=V_x^j=0$ and $V_y^j\ne0$.
Using the maximum principle and the fact that $U+iV$ is not constant, we may
pass to the limit as $j\to\infty$ to obtain
\[V_x^2+ V_y^2>0 \quad\text{in }\mcr_{kh}\cup\{(x, -kh):x\in\R\}. \]
By taking (\ref{gds}) into account, we therefore deduce that
\be V_x^2+ V_y^2>0 \quad\text{in }\overline{\mcr_{kh}}.\label{nondef}\ee

At this point we state the following lemma, whose proof is somewhat technical.
In order not to disrupt the flow of the argument, we choose to take its validity for granted for the moment,
and defer proving it until after the proof of Theorem \ref{nodalthm}.

\begin{lemma}\label{lrr} Let $(m,Q,v)\in\R\times\R\times C^3_{2\pi, e}(\R)$, with $[v]=h$ and $v\not\equiv h$, be a solution of (\ref{sys}) which satisfies (\ref{qnm}),
(\ref{nondef}), and (\ref{as1}). Then $v$ also satisfies (\ref{nmbv}) and (\ref{nodalloc}).
\end{lemma}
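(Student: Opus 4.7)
The plan is to apply maximum-principle arguments to a carefully chosen harmonic auxiliary function on the rectangle $\mathcal R := (0,\pi) \times (-kh, 0)$, supplemented by Hopf's boundary-point lemma at interior top boundary points and Serrin's edge-point lemma at the corners $(0,0)$ and $(\pi, 0)$. I would first establish the preliminary sign information. The function $V_x$ is harmonic in $\mathcal R$, and the evenness of $v$ about both $x=0$ and $x=\pi$, together with the Dirichlet condition $V|_{y=-kh}=0$, forces $V_x \equiv 0$ on the left, right, and bottom sides of $\partial\mathcal R$; hypothesis (\ref{as1}) gives $V_x(x,0) = v'(x) \leq 0$ on the top. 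The weak maximum principle yields $V_x \leq 0$ on $\overline{\mathcal R}$, and since $v \not\equiv h$ forces $V_x \not\equiv 0$, strong maximum upgrades this to $V_x < 0$ in the open rectangle.

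The crucial step for (\ref{nmbv}) is to introduce the auxiliary function
\[
\widetilde\Psi(x,y) := \frac{V_y \zeta_x - V_x \zeta_y}{V_x^2 + V_y^2},
\]
which is the pull-back under $U+iV$ of the harmonic function $\psi_X$ on the physical domain --- it is thus harmonic in $\mathcal R$, and well-defined throughout $\overline{\mathcal R}$ by (\ref{nondef}). Using $\zeta_x(x,0) = -\Upsilon V V_x$ on the top, the vanishing of $V_x$ and $\zeta_x$ on the vertical sides (both by evenness), and $V = \zeta = 0$ on the bottom, one verifies $\widetilde\Psi \equiv 0$ on $\partial \mathcal R_l \cup \partial \mathcal R_r \cup \partial \mathcal R_b$, while $\widetilde\Psi(x,0) = -V_x F/H$ on the top, where $F := \zeta_y + \Upsilon V V_y$ and $H := V_x^2 + V_y^2$. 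The Bernoulli identity $F^2 = (Q-2gV)H$ on $\{y=0\}$, combined with (\ref{qnm}) and (\ref{nondef}), shows that $F$ is nowhere zero on the top, hence of constant sign; consequently $\widetilde\Psi$ has constant sign on $\partial\mathcal R$ and, by strong maximum (ruling out $\widetilde\Psi \equiv 0$, which would force $V_x \equiv 0$ on the top, hence $v \equiv h$), is strictly signed in the interior.

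To prove (\ref{nmbv}), I argue by contradiction. If $V_x(x_0,0) = 0$ for some $x_0 \in (0, \pi)$, then $\widetilde\Psi(x_0, 0) = 0$, and Hopf's boundary-point lemma applied at the smooth top boundary point $(x_0, 0)$ forces $\widetilde\Psi_y(x_0, 0) \neq 0$ with a definite sign. On the other hand, $x_0$ is an interior maximum of $v'|_{[0,\pi]}$, so $V_{xx}(x_0, 0) = v''(x_0) = 0$, and harmonicity gives $V_{yy}(x_0, 0) = 0$; similarly $\zeta_{xx}(x_0, 0) = -\Upsilon(V_x^2 + V V_{xx})|_{(x_0,0)} = 0$. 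Computing $\widetilde\Psi_y(x_0, 0)$ directly and substituting $\zeta_{xy}(x_0, 0)$ from the $x$-derivative of Bernoulli at $x_0$ (which expresses $\zeta_{yx} + \Upsilon V V_{xy}$ in terms of $F, V_y, V_{xy}$ via $F^2 = CH$), one obtains a clean cancellation yielding $\widetilde\Psi_y(x_0, 0) = 0$, contradicting Hopf. This cancellation, which ultimately reflects the fact that Bernoulli holds identically along the top boundary, is the main technical point.

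The endpoint curvature conditions (\ref{nodalloc}) then follow from Serrin's edge-point lemma applied to the harmonic function $V_x$ at the corners $(0,0)$ and $(\pi, 0)$. At each corner, $V_x \leq 0$ in $\mathcal R$, $V_x \equiv 0$ on the adjacent vertical side, and $V_x \not\equiv 0$, so the edge-point lemma yields the strict tangential inequality $V_{xx}(0,0) = v''(0) < 0$ and analogously $V_{xx}(\pi, 0) = v''(\pi) > 0$, completing the proof.
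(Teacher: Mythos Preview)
Your argument for (\ref{nmbv}) is correct and is essentially the paper's argument: your $\widetilde\Psi$ is the negative of the paper's auxiliary function $f=(V_x\zeta_y-V_y\zeta_x)/(V_x^2+V_y^2)$, and the Hopf-plus-Bernoulli-cancellation step you outline at an interior top point $x_0\in(0,\pi)$ is exactly what the paper carries out.

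Your argument for (\ref{nodalloc}), however, has a genuine gap. Serrin's edge-point lemma applied to $V_x$ at the corner $(0,0)$ only guarantees that \emph{some} first- or second-order partial derivative of $V_x$ is nonzero there; it does not single out $V_{xx}(0,0)=v''(0)$. Since $V_x\equiv 0$ on the vertical side $\{x=0\}$, one automatically has $V_{xy}(0,0)=V_{xyy}(0,0)=0$, and then $V_{xxx}(0,0)=0$ by harmonicity of $V_x$. So if one assumes $V_{xx}(0,0)=0$ for contradiction, Serrin merely yields $V_{xxy}(0,0)\neq 0$, which contradicts nothing. A concrete obstruction: on $(0,\pi)\times(-1,0)$ the harmonic function
\[
u(x,y)=-\sin x\,\sinh(y+1)+\tfrac{\sinh 1}{2\sinh 2}\,\sin 2x\,\sinh\bigl(2(y+1)\bigr)
\]
vanishes on the left, right and bottom sides, satisfies $u(x,0)=\sinh 1\,\sin x\,(\cos x-1)<0$ on $(0,\pi)$, hence $u<0$ in the interior, yet $u_x(0,0)=0$. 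Thus ``Serrin applied to $V_x$'' cannot by itself force $v''(0)\neq 0$.

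The paper instead applies Serrin at the corner to the \emph{same} auxiliary function $f=-\widetilde\Psi$ and, assuming $v''(x_0)=0$ at $x_0\in\{0,\pi\}$, shows that \emph{all} first- and second-order derivatives of $f$ vanish at $(x_0,0)$: the vanishing of $f_y,f_{yy},f_{xx}$ comes from $f\equiv 0$ on the vertical side together with harmonicity, $f_x(x_0,0)=0$ follows from the top-boundary formula for $f$ because both $V_x$ and $V_{xx}$ vanish there, and the delicate step $f_{xy}(x_0,0)=0$ requires differentiating the Bernoulli relation twice and using the extra corner information $V_{xy}(x_0,0)=0$ (from evenness) to obtain a cancellation entirely analogous to the one you already exploited for (\ref{nmbv}). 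This second Bernoulli-based cancellation at the corners is the essential ingredient your sketch is missing.
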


It has been noted in the proof of Theorem 3 that any solution $v$ of class $C^{2,\alpha}$ is necessarily of
class $C^3$, so that Lemma 4 is applicable. As a consequence of Lemma \ref{lrr},
our solution $(m, Q,v)$, which is in fact $(m(s^*), Q(s^*), v_{s^*})$, satisfies (\ref{nmbv}) and (\ref{nodalloc}).
It finally  remains  to prove (\ref{ga}).
Recall that (\ref{as2}) is valid. Suppose first for a contradiction that (\ref{ga3}) fails, so that there exists $x_0\in (0,\pi)$ such that
\[
0=\frac{x_0}{k}+ \mcc_{kh}(v-h)(x_0) = U(x_0,0).\]
Then the  harmonic function $U$ in $\mcr$ has at $(x_0,0)$ a global minimum in $\overline\mcr$.
Hence it follows from the Hopf  lemma that $U_y(x_0, 0)<0$.
By the Cauchy-Riemann equations, $V_x(x_0,0)>0$, which contradicts (\ref{as1}).
This proves that (\ref{ga3}) is valid.
On the other hand, (\ref{ga4}) must necessarily be true, since otherwise, taking into account everything we have proved so far, it would follow that $(m(s^*), Q(s^*), v_{s*})\in {\mathcal V}_{+}$, a contradiction to the definition of $s^*$. This completes the proof of Theorem \ref{nodalthm}, modulo Lemma \ref{lrr}.
\end{proof}

\begin{proof}[Proof of Lemma \ref{lrr}] By Theorem \ref{equiv}, equation (\ref{add}) is also satisfied.
Let $\zeta$ be the solution of (\ref{gc}) and $V$  the solution of \eqref{VV}.
Then    (\ref{gc2}) also holds due to the discussion after \eqref{add}.
The assumptions (\ref{qnm}) and (\ref{nondef}) ensure that
\be
\zeta_y= -\Upsilon VV_y\pm (Q-2gV)^{1/2}(V_x^2+V_y^2)^{1/2}
\quad\text{at }(x,0)\text{ for all }x\in\R,\label{knx}\ee
the choice of sign in the $\pm$ above being the same for all $x\in\R$.
The required result will be obtained by applying maximum principle-type arguments to the
function  $f:\overline\mcr_{kh}\to \R$  given by
\be f=\frac{V_x\zeta_y- V_y\zeta_x}{V_x^2+V_y^2}.\label{deff}\ee
(The choice of $f$ is motivated by the formula (\ref{vel}) for the horizontal velocity of the fluid
in a steady water wave. However, here we are not assuming (\ref{inj}), so that our solution need
not correspond to a water wave.)
It is easy to see that $f$ is a harmonic function in $\mcr_{kh}$
because it is the imaginary part of $g/h$, where $g=-(\zeta_y+i\zeta_x)$ and $h=V_y+iV_x$.
					Note that
$f=0$ on $\partial\mcr_l\cup \partial\mcr_b\cup \partial\mcr_r$, while
\be
f= \frac{V_x(\zeta_y+\Upsilon VV_y)}{V_x^2+V_y^2}
=\pm \frac{V_x(Q-2gV)^{1/2}}{(V_x^2+V_y^2)^{1/2}}\quad\text{on }\partial\mcr_t,\label{newf}\ee
as a consequence of (\ref{zeta_x}) and then (\ref{knx}).
Therefore, in view of the assumptions (\ref{qnm}) and (\ref{nondef}),
$f$ has a constant sign on $\partial\mcr_t$ (depending on the choice of sign in $\pm$), and
$f\not\equiv 0$ on $\partial \mcr_t$.
By the maximum principle it follows that
$f$ has a strict sign in $\mcr$.

We will first prove that $v'(x)<0$ for all $x\in (0,\pi)$.
Suppose on the contrary that $v'(x_0)=0$ for some $x_0\in (0,\pi)$.
Since $x_0$ is a global maximum for $v$ on $(0,\pi)$, it follows that $v''(x_0)=0$.
Also, it follows from (\ref{newf}) that $f(x_0,0)=0$, and thus the harmonic function $f$ in $\mcr$ has at $(x_0,0)$ a global extremum in $\mcr\cup\partial\mcr$.
It follows from the Hopf boundary-point lemma that $f_y(x_0,0)\neq 0$.
We shall now prove, by direct calculation, that $f_y(x_0,0)=0$, thus obtaining a contradiction.
To simplify the following calculations, it is convenient to write $f=p/q$, where $p=V_x\zeta_y-V_y\zeta_x$, $q=V_x^2+V_y^2$.
Then, at every point in $\overline{\mcr_{kh}}$ we have
\be
f_y=\frac{p_yq-pq_y}{q^2}.\label{sale}\ee
This implies in particular that, at the point $(x_0,0)$, at which $p=0$, $V_x=0$, and $V_{xx}=0$, we have
\be
f_y= \frac{p_y}{q}=\frac{V_{xy}\zeta_y- V_y\zeta_{xy}}{V_y^2}.\label{fy}\ee
We now differentiate equation (\ref{knx})  with respect to $x$ to obtain
\be
\zeta_{xy}= -\Upsilon (V_xV_y+VV_{xy})
\pm \left(\frac{-gV_x(V_x^2+V_y^2)^{1/2}}{(Q-2gV)^{1/2}}
+  \frac{(Q-2gV)^{1/2}(V_xV_{xx}+V_yV_{xy})}{(V_x^2+V_y^2)^{1/2}}\right)  \label{knxd}  \ee
at $(x,0)$  for all $x\in\R$.
It follows from (\ref{knx}) and (\ref{knxd}) that, at the point $(x_0,0)$,
at which $V_x=0$ and $V_{xx}=0$, we have
\begin{align}
\zeta_y&= -\Upsilon VV_y\pm (Q-2gV)^{1/2}|V_y|, \label{ale}\\\
\zeta_{xy}&= -\Upsilon VV_{xy}\pm \frac{(Q-2gV)^{1/2}V_yV_{xy}}{|V_y|},\end{align}
from which one can see that $V_{xy}\zeta_y=V_y\zeta_{xy}$ and therefore, by (\ref{fy}), that $f_y=0$, thus obtaining a contradiction. This proves that indeed $v'(x)<0$ for all $x\in (0,\pi)$.

We will now prove that $v''(0)<0$ and $v''(\pi)>0$. Since $v'(0)=v'(\pi)=0$ and $v'\leq 0$ on $[0,\pi]$,
it follows that $v''(0)\leq 0$ and $v''(\pi)\geq 0$, so we just need to prove that the two inequalities are strict.
Suppose on the contrary that $v''(x_0)=0$ for some $x_0\in \{0,\pi\}$.
Then $V_x=0$ and $V_{xx}=0$ at $(x_0, 0)$.
Since the harmonic function $f$ in $\mcr$ has at $(x_0,0)$ a global extremum in $\mcr\cup\partial \mcr$,
the Serrin corner-point lemma \cite{Cb}
ensures that not all first and second order derivatives of $f$ can vanish at $(x_0,0)$.
We now show however by direct calculation that under the present assumptions
all first and second order derivatives of $f$ do vanish at $(x_0,0)$, which will constitute a contradiction.
Indeed, because $f=0$ on $\partial \mcr_l\cup \partial\mcr_r$, it follows that $f_y=0$ and $f_{yy}=0$
		 and also  that $f_{xx}=0$  on $\partial \mcr_l\cup \partial\mcr_r$ since $f$ is harmonic.
Thus we can see from (\ref{newf}) that each term in the expression for the derivative
with respect to $x$ of $f$ at $(x_0,0)$ contains as a factor either  $V_x$ or $V_{xx}$,
so that $f_x(x_0,0)=0$. It thus remains to examine the mixed derivative $f_{xy}(x_0,0)$.
To accomplish this task, we calculate from (\ref{sale}) that at every point in $\overline\mcr_{kh}$ we have
\be
f_{xy}=f_{yx}= \frac{(p_{xy}q+p_yq_x-p_xq_y-pq_{xy})q^2-(p_yq-pq_y)2qq_x}{q^4}.\ee
This implies in particular that at the point $(x_0,0)$, at which
$\zeta_x=V_x=\zeta_{xy}=V_{xy}=V_{xx}=V_{yy}=p=p_x=p_y=0$,
we have
\be
f_{xy}=\frac{p_{xy}}{q}.\label{qws}\ee
It is easy to see, when calculating $p_{xy}$ by differentiation in the formula $p=V_x\zeta_y-V_y\zeta_x$,
that at the point $(x_0,0)$  six out the eight terms are zero and we are left with
\be
p_{xy}= V_{xxy}\zeta_y-V_y\zeta_{xxy}.\label{pln}\ee
We now calculate $\zeta_{xxy}(x_0,0)$ by differentiating with respect to $x$ in (\ref{knxd})
and, taking into account that at $(x_0,0)$ we have  $V_x=V_{xx}=V_{xy}=0$, we obtain
\be
\zeta_{xxy}= -\Upsilon VV_{xxy}\pm \frac{(Q-2gV)^{1/2}V_yV_{xxy}}{|V_y|}.\ee
Because (\ref{ale}) is also valid at $(x_0,0)$, we see that $V_{xxy}\zeta_y=V_y\zeta_{xxy}$.
Therefore by (\ref{qws}) and (\ref{pln}) we have $f_{xy}(x_0,0)=0$.
Thus all first and second derivatives of $f$ vanish at $(x_0,0)$, a contradiction. As explained earlier, this implies that indeed $v''(0)<0$ and $v''(\pi)>0$.

\end{proof}

\section{The flow beneath a wave of small amplitude}

In this section we point out some features of the small amplitude waves whose existence was established
in Section 3. As before, we only discuss the bifurcation from the lowest eigenvalue
so that throughout this section $n=1$.    For simplicity, we refrain
from using the subscript ``1" when referring to $\lambda^\ast_{1,\pm}$ or to $\K_{1,\pm}$.

By restricting the local bifurcation curve to a sufficiently small neighborhood of $(\lb_\pm^\ast,0) \in \bdr \times \mcx$,
the condition
$v'\neq 0$ in $(0,\pi)$
will hold everywhere on it, except for the bifurcating laminar flow (with a flat surface profile)
corresponding to $s=0$.
For $s \neq 0$ the wave profiles $\mcs$  obtained thereby are  graphs that are strictly monotone
between crests and troughs and are symmetric about the crest line.
For $s>0$ the crest is located at $x=0$, while for $s<0$  the trough is at $x=0$.
As for the velocity field in the fluid, for a given fluid domain $\Omega$ beneath the surface $\mcs$ and
above the flat bed $Y=0$, the boundary-value problem (\ref{g1})-(\ref{g3})-(\ref{g4}) with $L$-periodicity
in the $X$-variable has a unique solution.   The symmetry of $\mcs$ implies that
the stream function $\psi$ is even in the $X$-variable.

Recall that by a {\it critical point} we mean a point where $\psi_Y=0$, while a critical line is
a curve of critical points, and by a {\it stagnation point} we mean a point where $\psi_X=\psi_Y=0$.
We will state the results for the local bifurcating curve $\K_{-,loc}$, corresponding to the choice of the minus sign
in (\ref{lb6}), the case of $\K_{+,loc}$ being obtained by interchanging $\pm$ in $\K_\pm$ and in
the sign of $\Upsilon$.

\begin{theorem}[The local curve $\K_{-,loc}$]     \label{local props}

 (i) A laminar flow on $\K_-$ that is a bifurcation point admits critical points if and only if $\Upsilon<0$ and
\begin{equation}\label{nss}
\frac{\tanh(kh)}{kh} \le \frac{\Upsilon^2h}{g+\Upsilon^2 h}\,.
\end{equation}
If they exists, all critical points are located on a unique horizontal line beneath the flat free surface.

(ii) If either $\Upsilon > 0$, or $\Upsilon<0$ is such that (5.50) fails, then the flows on the local bifurcating curve $\K_{-,loc}$ that
are sufficiently close to the bifurcating laminar flow present no critical points and the
streamlines foliate the fluid domain (see Figure 5).

(iii) If $\Upsilon<0$ and if the laminar flow at the bifurcation point has a critical line {\it strictly above}
the flat bed, then the nearby waves on $\K_{-,loc}$ have a cat's eye structure with critical points
(see Figure 6).

(iv)  If $\Upsilon<0$ and if the flat bed is itself a critical line of the laminar flow at the bifurcation point,
then the nearby waves on $\K_{-,loc}$ have an isolated region of flow reversal near the bed, delimited by a critical
layer (see Figure 7).
\end{theorem}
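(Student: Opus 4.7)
My approach is to work on the local bifurcation curve $\K_{-,loc}$ using the expansion $v_s(x) = h + s\cos(x) + o(s)$ from Theorem~\ref{glbp}(ii). Via the conformal parametrization, the harmonic functions $V$, $U$, $\zeta$ admit analogous expansions whose first-order corrections are the harmonic extensions of the appropriate Dirichlet data from $y = 0$ to $y = -kh$. Substituting into formula (\ref{vel}), and converting from strip to physical derivatives, I expect
\[
\psi_Y(X, Y; s) = \Upsilon(Y - Y^*) - s\,\tfrac{k\lambda^*_{1,-}}{\sinh(kh)}\cos(kX)\cosh(kY) + o(s),
\]
\[
\psi_X(X, Y; s) = s\,\tfrac{k\lambda^*_{1,-}}{\sinh(kh)}\sin(kX)\sinh(kY) + o(s),
\]
where $Y^* := h - \lambda^*_{1,-}/\Upsilon$ is the laminar critical height. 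All four assertions of the theorem follow by direct analysis of these formulas.

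For (i), the zero set of $\psi_Y|_{s=0}$ is precisely the horizontal line $\{Y = Y^*\}$. Requiring $Y^* \in [0, h]$, substituting the explicit value (\ref{lb6}) and squaring reduces to condition (\ref{nss}) together with $\Upsilon < 0$; the case $\Upsilon > 0$ is ruled out because $\lambda^*_{1,-}/\Upsilon < 0$ then forces $Y^* > h$. For (ii), if $\Upsilon > 0$ or (\ref{nss}) fails, then $\psi_Y|_{s=0} = \Upsilon(Y - Y^*)$ has a strict sign on the compact laminar fluid strip $\{0 \le Y \le h\}$; continuous dependence on $s$ in $C^{1,\alpha}$ of the conformal data and of $\zeta$ propagates this strict sign to $\psi_Y$ on $\overline{\Omega}_s$ for all small $|s|$, and the implicit function theorem writes every streamline $\{\psi = \text{const}\}$ as a graph $Y = \eta(X)$, yielding the foliation.

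For (iii) and (iv), assume $\Upsilon < 0$ and (\ref{nss}), so $Y^* \in [0, h)$ and $\lambda^*_{1,-}\Upsilon > 0$. The zero set of $\psi_X$ at leading order is $\{\sin(kX) = 0\} \cup \{Y = 0\}$; solving $\psi_Y = 0$ along these loci via the implicit function theorem (applicable since $\partial_Y\psi_Y|_{s=0} = \Upsilon \neq 0$) gives, for (iii), exactly two interior stagnation points per period, near $(0, Y^*)$ and $(\pi/k, Y^*)$. The leading term in the Hessian determinant there is proportional to $\lambda^*_{1,-}\Upsilon\sinh(kY^*)\cos(kX_0)$, which, using $\lambda^*_{1,-}\Upsilon > 0$ and $\sinh(kY^*) > 0$, makes the stagnation above the crest a center and the one above the trough a saddle: this is the cat's eye configuration. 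For (iv), $Y^* = 0$: the stagnation point near $(\pi/k, 0)$ exits the fluid domain, while the one near $(0, 0)$ persists in the interior as a center at height $O(s)$ (the Hessian computation here requires one more order, since $\sinh(kY^*) = 0$). The critical layer $Y = s\cos(kX)\cdot kh/\sinh(kh) + o(s)$ enters $\Omega_s$ only over $|kX| < \pi/2$ and meets the bed $\mathcal B$ at $kX = \pm\pi/2$, where two extra saddle stagnation points sit on $\mathcal B$. The thin pocket bounded above by the critical layer and below by $\mathcal B$ is precisely the flow-reversal region.

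The main obstacle is the perturbation bookkeeping in (\ref{vel}): the formula gives $\psi_Y, \psi_X$ at the moving physical point $(U(x,y;s), V(x,y;s))$, so the naive Taylor expansion in $s$ at fixed $(x, y)$ contains drift terms proportional to $V^{(1)}, U^{(1)}$. These must be subtracted using $\psi_{YY}^{(0)} = \Upsilon$ and $\psi_{YX}^{(0)} = 0$ to recover the genuine physical-coordinate first-order corrections $\tilde\psi_X^{(1)}, \tilde\psi_Y^{(1)}$; a consistency check is that the corrected expressions satisfy Young's theorem $\partial_X\tilde\psi_Y^{(1)} = \partial_Y\tilde\psi_X^{(1)}$. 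Once they are established, the Hessian classification above is elementary, and for (iv) one additionally verifies by direct sign inspection of the explicit leading order of $\psi_Y$ that the flow-reversal region is a single connected pocket per period, bounded precisely by the critical layer.
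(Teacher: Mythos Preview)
Your treatment of (i) and (ii) matches the paper's: both identify the laminar critical level $Y^* = h - \lambda_-^*/\Upsilon$, reduce $Y^* \in [0,h]$ to (\ref{nss}) with $\Upsilon < 0$, and invoke persistence of the strict sign of $\psi_Y$ for (ii). Your first-order formulas for $\psi_X, \psi_Y$ in physical coordinates are also correct; indeed they follow directly from the harmonic extension $\psi^{(1)}(X,Y) = -\lambda_-^*\cos(kX)\sinh(kY)/\sinh(kh)$ in the laminar strip, which largely bypasses the conformal drift bookkeeping you flag as the main obstacle.

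The substantive divergence is in (iii). The paper does \emph{not} use perturbative expansions there. Instead it relies on two exact structural facts valid for all sufficiently small $s$: first, $\psi_{YY} = \Upsilon < 0$ throughout the fluid, so $Y \mapsto \psi(X,Y)$ is strictly concave and the critical layer $Y = Y_0(X)$ is a smooth graph by the implicit function theorem; second, the harmonic function $\psi_X$ vanishes on three sides of the half-period domain $\Omega_+ = \{0 < X < L/2,\ 0 < Y < \eta(X)\}$ and is strictly negative on the free surface (from $\eta' < 0$ and $\psi_Y < 0$ there), so the strong maximum principle gives $\psi_X < 0$ throughout $\Omega_+$. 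This immediately confines all stagnation points to the symmetry lines $X = 0, L/2$, and moreover shows that $M(X) := \psi(X, Y_0(X))$ is strictly monotone on each half-period. The monotonicity of $M$ is what forces the level sets $[\psi = c]$ for $c$ between the saddle and center values to be closed curves encircling the center, i.e.\ the global cat's-eye picture. Your Hessian classification gives only the local phase portrait near each stagnation point; it does not by itself produce the global closed-streamline structure, nor does it rigorously exclude additional stagnation points off the symmetry axes (the leading-order zero set of $\psi_X$ is one-dimensional, so an $o(s)$ perturbation could in principle create new zeros absent a transversality argument you have not supplied). The paper's maximum-principle route is both simpler and complete here.

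For (iv) the two approaches are closer, since the paper also computes the first-order expansions of $V^s$, $U^s$, $\zeta^s$ and evaluates (\ref{vel}) on the bed. But even there the paper supplements the expansion with the Hopf boundary-point lemma applied to $\psi_X$ to obtain $\psi_{XY}(X,0) < 0$ for all $X \in (0, L/2)$, which yields uniqueness of the bed zero $X_0$ exactly rather than only at leading order. Your remark that the interior-center Hessian ``requires one more order'' is correct but left unexecuted; the paper instead deduces that the stagnation point $C$ is surrounded by closed streamlines from the same concavity-plus-monotonicity mechanism as in (iii), avoiding any higher-order computation.
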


\begin{proof}
$(i)$
The stream function $\psi(X,Y)$ of the  {\it laminar flow} corresponding to the bifurcation point
$(\lb_-^\ast;\,\mu=0,w=0)$  solves (\ref{g}) with $\psi_X=0$ and
$m=m_-^*  =  \lb_-^\ast h - {\Upsilon h^2}/{2}$
throughout $\Omega=\{(X,Y):\ 0 \le Y \le h\}$. We have explicitly
\begin{equation}\label{blf}
\psi(X,Y)=\frac{\Upsilon}{2}\,Y^2 + (\lb_-^\ast -\Upsilon h)\,Y - \lb_-^\ast h
+ \frac{\Upsilon h^2}{2},\qquad 0 \le Y \le h\,,
\end{equation}
the corresponding velocity field being
\begin{equation}\label{vfl}
(\psi_Y,\,-\psi_X)=(\Upsilon Y + \lb_-^\ast - \Upsilon h,\,0),\qquad 0 \le Y \le h\,.
\end{equation}				
Since $\lb_-^\ast < 0$ by (\ref{lb6}), there are no critical points in the irrotational case $\Upsilon =0$.
On the other hand, for $\Upsilon \neq 0$, it is easily seen that
 critical points exist in the bifurcating laminar flow corresponding to $\lb_-^\ast$ if and only if
\begin{equation}\label{nss0}
h \ge \frac{\lb_-^\ast}{\Upsilon} \geq0\,.
\end{equation}
Whenever (\ref{nss0}) is satisfied, critical points lie on the critical line
\begin{equation}\label{cl}
Y=h - \frac{\lb_-^\ast}{\Upsilon}
\end{equation}
that is located {\it beneath} the flat free surface $Y=h$, since $\lb_-^\ast < 0$. Straightforward
manipulations starting from (\ref{lb6}) show that (\ref{nss0}) is satisfied if and only if
$\Upsilon<0$ and \eqref{nss} holds.

If $\Upsilon<0$, we remark that
since the function $s \mapsto \tanh(s)/s$ is a strictly decreasing bijection from $(0,\infty)$ onto $(0,1)$,
critical layers in the bifurcating laminar flow occur if and only if $k$ exceeds a certain critical value; since $L=\frac{2\pi}{k}$, this means that critical layers occur whenever the wavelength is sufficiently small.

$(ii)$ If $\Upsilon > 0$, then (5.52) shows that the maximum of $\psi_Y$ throughout
the laminar flow at the bifurcation point is attained at the flat free surface $Y=h$, where $\psi_Y(h)=\lb_-^\ast <0$,
so that $\psi_Y<0$ throughout the flow. If $\Upsilon<0$ is such that (5.50) fails, then (5.52) with the choice
$\lb_-^\ast$ shows that the maximum of $\psi_Y$ throughout
the laminar flow at the bifurcation point is attained on the flat bed $Y=0$, with the inequality opposite to (5.50)
ensuring that $\psi_Y<0$ there. Consequently, we have again that $\psi_Y<0$ throughout
the laminar flow at the bifurcation point. Using (\ref{vel}), we infer that this inequality will
persists along the curve $\K_-$ of non-trivial solutions, provided that these are sufficiently close to the
bifurcating laminar flow. All these waves will therefore not present critical points in the flow, with
the streamlines  in this case providing a foliation of the fluid domain. Typical streamline patterns for
the laminar flows with bifurcation parameter $\lb_-^\ast$ are depicted in Figure 5.

\begin{figure}   \begin{center}$
\begin{array}{cc}
\includegraphics[width=3in]{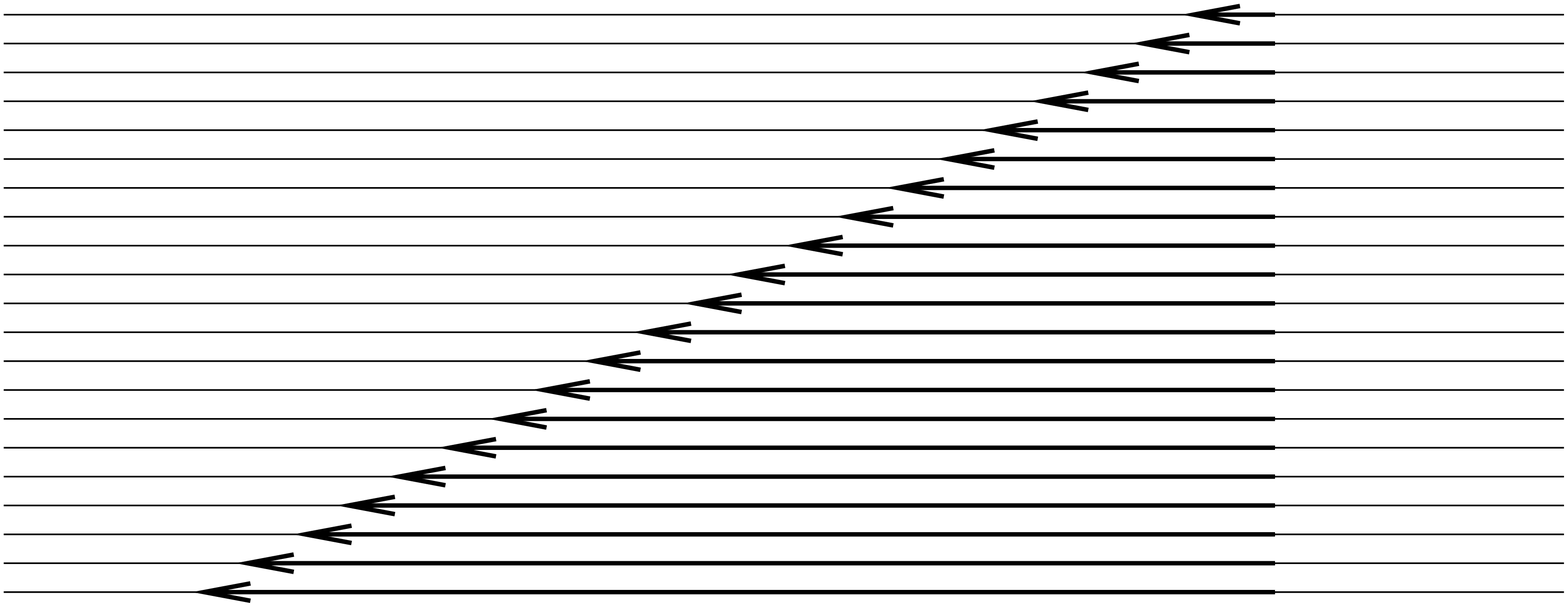} & \includegraphics[width=3in]{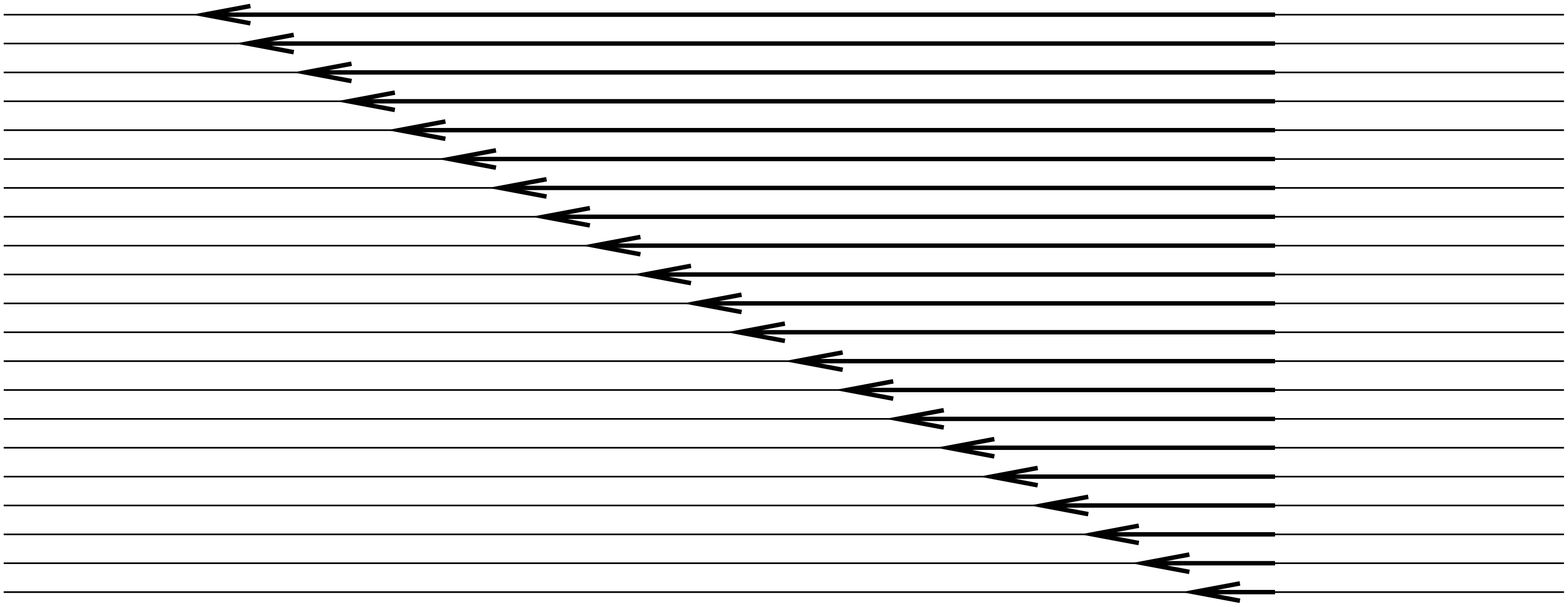}
\end{array}$
\end{center}
 \includegraphics[scale=0.33]{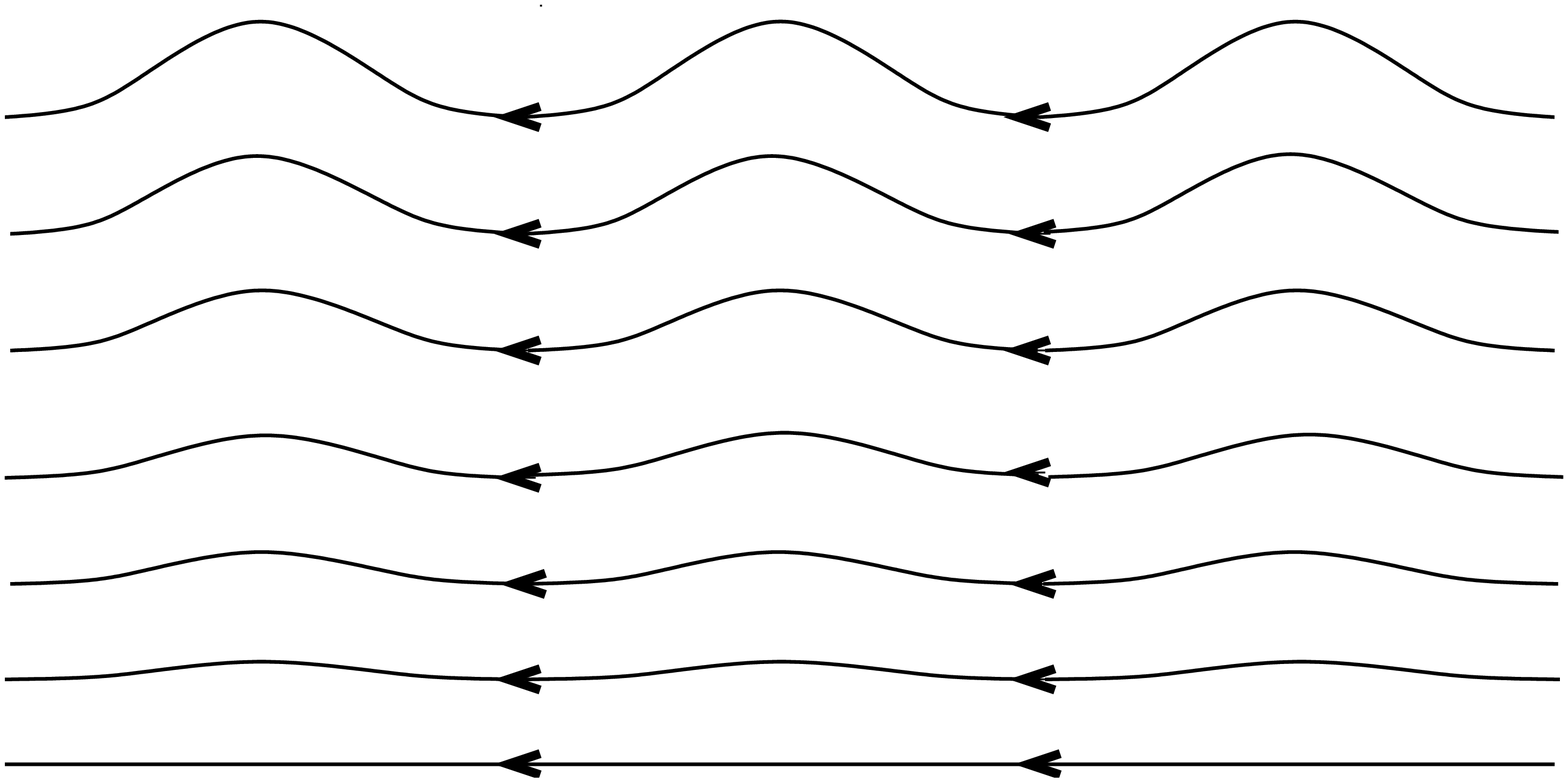}
\caption{Streamlines of a flow of constant vorticity $\Upsilon$, in the absence of critical points: On the top left laminar flows for $\Upsilon > 0$, on the top right laminar flows
for $\Upsilon<0$, and below nearby waves of small amplitude, lying in function space on the local bifurcation curve. In both settings
the horizontal fluid velocity is negative throughout the flow.}  \end{figure}

On the other hand, for a bifurcating laminar flow with critical points in the case  $\Upsilon<0$,
the streamline pattern for a
wave corresponding to a non-trivial solution on $\K_{-,loc}$ is in
marked contrast to that familiar from flows without critical points.
Indeed, the previous considerations
show that the bifurcation parameter is $\lb_-^\ast<0$.
Moreover, from (\ref{vfl}) we get that the horizontal fluid velocity $\psi_Y$ of this laminar flow takes on
the value $\psi_Y(h)=\lb_-^\ast<0$ at the surface $Y=h$, with $\psi_Y(0)=\lb_-^\ast - \Upsilon h \ge 0$
on the bed $Y=0$, the latter inequality being a restatement of (\ref{nss0}).

\begin{figure}

\includegraphics[width=3.6in]{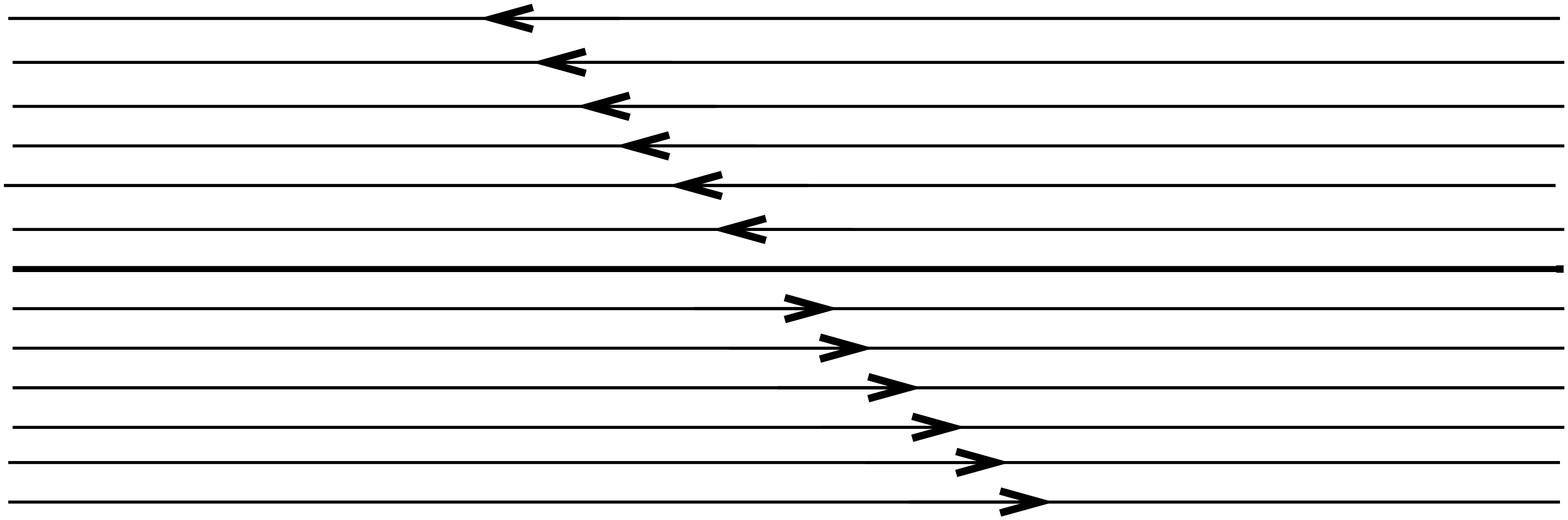}

\bigskip

\includegraphics[scale=0.36]{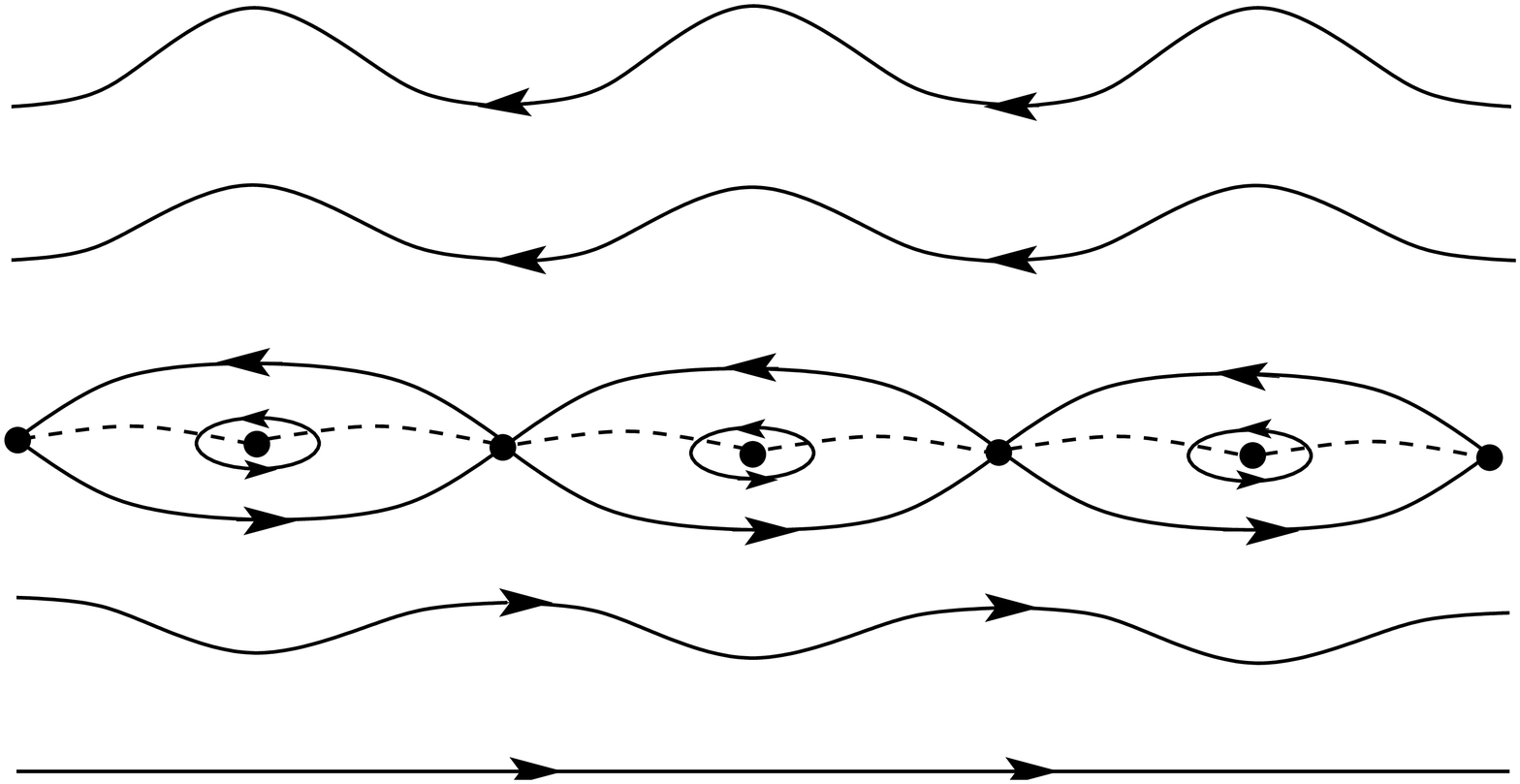}
\caption{Flows with constant vorticity $\Upsilon<0$, with critical points above the flat bed: On top, for a laminar flow the horizontal critical layer consists of stagnation points
that mark the flow reversal, while below, for a nearby wave of small amplitude (lying in function space on the local bifurcation curve)
only the points of the critical layer that are beneath the crest or the trough are stagnation points. In the moving
frame the stagnation point beneath the crest is surrounded by closed streamlines (Kelvin's `cat's eye' flow pattern). The critical
layer, depicted by the dashed curve, delimits an upper region of negative horizontal fluid velocity, and beneath it
the flow direction is reversed.}   \end{figure}

In the specific scenario $(iii)$
the stream function $\psi$ of the laminar bifurcating flow is such that $\psi_{YY}<0$ throughout the fluid, with
$\psi_Y<0$ near the surface and $\psi_Y>0$ near the bed. All these inequalities will persist for the stream functions associated
to the flows along the curve $\K_-$ of non-trivial solutions, provided that these flows are sufficiently close to the
bifurcating laminar flow. Let $Y=\eta(X)$ be the free surface of such a flow, with the wave crest located at $X=0$ and
the wave troughs at $X=\pm L/2$. At every fixed $X$, the corresponding real-analytic stream function $\psi$ is such
that the function $Y \mapsto \psi(X,Y)$ is strictly concave on $[0,\eta(X)]$, with $\psi_Y(X,0)>0$ and $\psi_Y(X,\eta(X))<0$.
Consequently, there is a unique maximum point $Y_0(X) \in (0,\,\eta(X))$ of this function, with $\psi_Y(X,Y_0(X))=0$, and $\psi_Y(X,Y) >0$
for $0 \le Y < Y_0(X)$, while $\psi_Y(X,Y)<0$ for $Y \in (Y_0(X),\,\eta(X)]$. Since $\psi_{YY}<0$ throughout the flow, the implicit function
theorem \cite{BT} ensures that the critical layer $\{(X,Y) \in \Omega:\ \psi_Y(X,Y)=0\}$ coincides with
the graph of the real-analytic map $X \mapsto Y_0(X)$.

On the other hand, by (\ref{g3}) we have  $\psi(X,\eta(X))=0$ for all $X \in \bdr$. Differentiating this relation with respect to $X$ and
taking into account the fact that $\psi_Y(X,\eta(X))<0$ while $\eta'(X)<0$ for $X \in (0,L/2)$ in view of (\ref{nmbv}),
we deduce that $\psi_X(X,\eta(X))<0$
for all $X \in (0,L/2)$. The boundary condition (\ref{g4}) and the fact that $\psi$ is even and $L$-periodic in the $X$-variable
yield $\psi_X(0,Y)=0$ for $Y \in [0,\eta(0)]$, $\psi_X(X,0)=0$ for $X \in [0,L/2]$, and $\psi_X(L/2,Y)=0$ for
$Y \in [0,\eta(L/2)]$. Applying the strong maximum principle to the harmonic function $\psi_X$ in the domain
$$\Omega_+=\{(X,Y):\ 0 < X < L/2,\ 0 < Y < \eta(X)\},$$
we conclude that $\psi_X(X,Y)<0$ in $\Omega_+$. The fact that $\psi$ is
even in the $X$-variable yields $\psi_X(X,Y)>0$ in
$$\Omega_-=\{(X,Y):\ -L/2 < X < 0,\ 0 < Y < \eta(X)\}.$$
These considerations show that
the only stagnation points of this flow are the points on the critical layer that lie beneath the wave troughs and crests.

To describe qualitatively
the streamline pattern of the flow, notice first that $\partial_X\,\psi(X,Y_0(X))=\psi_X(X,Y_0(X))$ is
positive in $\Omega_-$
and negative in $\Omega_+$. Consequently the quantity $M(X)=\psi(X,Y_0(X))$, representing
the maximum of the function
$Y \mapsto \psi(X,Y)$ on $[0,\eta(X)]$, attained uniquely on the critical layer, is strictly increasing
as $X$ runs from $-L/2$ to $0$
and strictly decreasing as $X$ runs from $0$ to $L/2$. Set $M_-=M(-L/2)=M(L/2)$ and $M_+=M(0)$.
We know that $\psi_{Y}<0$ in the region
\[\Omega^+=\{(X,Y):\ -L/2 < X < L/2,\ Y_0(X) < Y < \eta(X)\}\]
above the critical layer, while $\psi_{Y}>0$ in the region
\[\Omega^-=\{(X,Y):\ -L/2 < X < L/2,\ 0 < Y < Y_0(X)\}\]
beneath the critical layer. Since $\max\,\{0,-m\}<M_-<M(X)=M(-X)<M_+$ for
every $X \in (-L/2,0)$, the implicit function theorem yields that in $\Omega_-^+:=\Omega_-\cap \Omega^+$ the level set $[\psi=M_-]$
consists of the
graph of a smooth strictly increasing curve $C_+$, joining the points $(-L/2,Y_0(-L/2))$ and $(0,Y_+)$,
where $Y_+ \in (Y_0(0),\,\eta(0))$ is the
unique solution to $\psi(0,Y)=M_-$ in this interval.
Similarly, in $\Omega_-^-:=\Omega_-\cap \Omega^-$ the level set $[\psi=M_-]$  consists of the graph of
a smooth strictly decreasing curve $C_-$ that joins $(-L/2,Y_0(-L/2))$ to $(0,Y_-)$,
where $Y_0 \in (0,Y_0(0))$ is the unique
solution to $\psi(0,Y)=M_-$ in this interval. The level set $[\psi=M_-]$ in $\Omega_+$ is the mirror image of the union of these
two curves. In the closure of $\Omega_- \cup \Omega_+$, the level set $[\psi=M_+]$ consists of the stagnation point $(0,Y_0(0))$,
while each level set $[\psi=\gamma]$ with $\gamma \in (M_-,M_+)$ is a closed curve encircling $(0,Y_0(0))$. For $\gamma \in (0,M_-)$,
the implicit function theorem yields that the level set $[\psi=\gamma]$ in $\Omega_-^+$ is the graph of a strictly increasing
function, located between $C_+$ and the free surface, $[\psi=\gamma]$ in $\Omega_-^-$ is the graph of a strictly decreasing
function, located between $C_-$ and the flat bed, while $[\psi=\gamma]$ in $\Omega_+$ is the mirror image of the union of
 these two curves. The cat's eye type flow pattern depicted in Figure 6 emerges.

\begin{figure}
\includegraphics[width=2.7in]{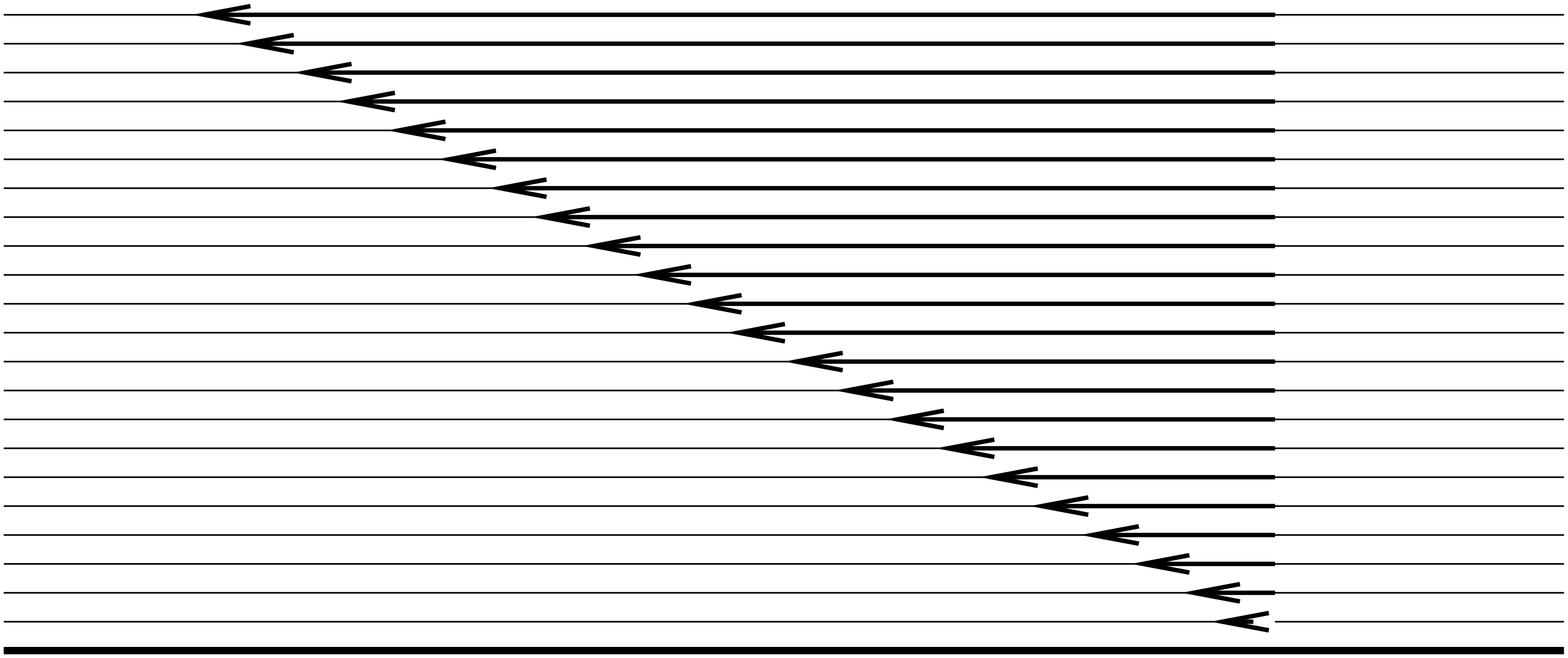}

\bigskip

\includegraphics[scale=0.33]{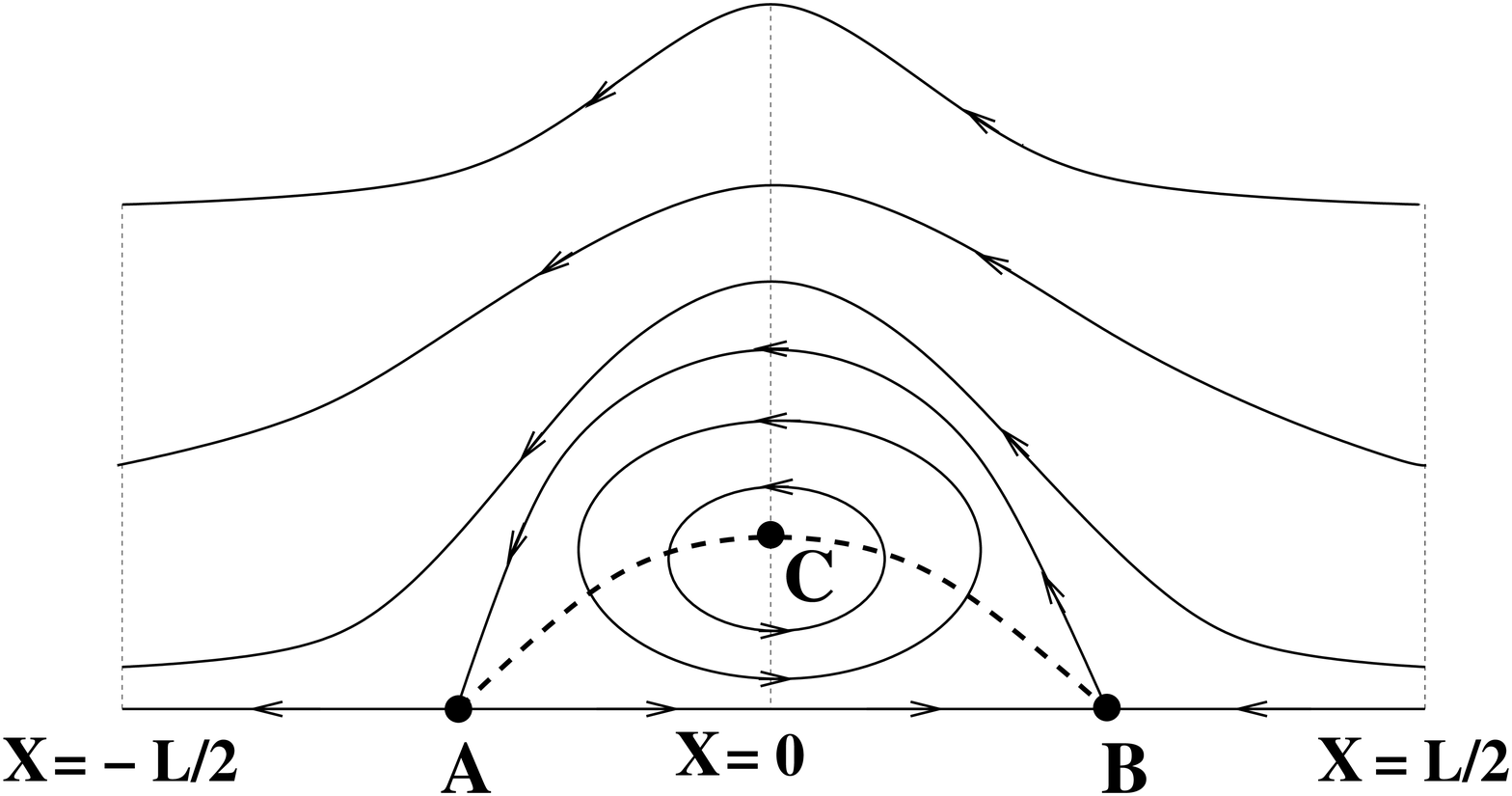}

\caption{Flows with constant vorticity $\Upsilon <0$, admitting in the moving frame stagnation points on the flat bed:
On top, a laminar flow for which all points on the bed are stagnation points, and below, a nearby wave of small amplitude (lying in
function space on the local bifurcation curve) presents in a periodicity window only three
stagnation points (identified as $A,\,B,\,C$ in the figure), connected by the critical layer curve
(depicted by the bold dashed curve) that encloses a near-bed region of flow-reversal. The stagnation point located beneath the
wave crest is surrounded by closed streamlines.}   \end{figure}

$(iv)$ We now consider the case $\Upsilon<0$ in which the critical points of the bifurcating laminar flow (with a flat free surface) are confined to the flat bed,
 that is, $\lb_-^\ast - \Upsilon h=0$. The proof of Theorem 3 shows that along the solution curve $\K_-$ the non-trivial solutions
sufficiently close to the bifurcating laminar flow correspond to $v_s(x)=h+s\,\cos(x)+o(s)$ in $C^{2,\alpha}_{2\pi, e}(\R)$ and $m(s)=m_-^\ast+o(s)$ for $s>0$ small
enough, where $m_-^\ast=\Upsilon h^2/2$ due to (3.18), (3.3) and $\lambda_-^\ast=\Upsilon h$. From (A.3) we deduce that
$$u_s(x)=\frac{x}{k} + s\,\coth(kh)\,\sin(x)+o(s)\quad\text{in $C^{2,\alpha}_{2\pi}(\R)$}.$$
The corresponding solution of (2.9) is given by
\begin{equation}\label{nm1}
V^s(x,y)=\frac{y+kh}{k} + s\,\cos(x)\,\frac{\sinh(y+kh)}{\sinh(kh)} + o(s)\qquad  \text{in $C^{2,\alpha}(\overline{{\mathcal R}_{kh}}$)},
\end{equation}
so that, in view of (2.10), the harmonic conjugate of $-V^s$ on ${\mathcal R}_{kh}$ is
$$U^s(x,y)=\frac{x}{k} + s\,\sin(x)\,\frac{\cosh(y+kh)}{\sinh(kh)} + o(s)\qquad \text{in $C^{2,\alpha}(\overline{{\mathcal R}_{kh}}$)}.$$
Furthermore, we have that the solution to (2.13) is given by
\begin{equation}\label{nm2}
\zeta^s(x,y)=-\,\Upsilon\, sh\,\cos(x)\,\frac{\sinh(y+kh)}{\sinh(kh)} + o(s)\qquad \text{in $C^{2,\alpha}(\overline{{\mathcal R}_{kh}}$)}.
\end{equation}
For $s>0$ small enough, the expansion
$$(u_s'(x),\,v_s'(x))=\left( \frac{1}{k} + s\,\coth(kh)\,\cos(x)\,+o(s),\,-\,s\,\sin(x) + o(s)\right)$$
in $C^{1,\alpha}_{2\pi}(\R)$ ensures that the free surface is the graph of a function $Y=\eta_s(X)$ with $\eta_s'(X) <0$ for $X \in (0,L/2)$. Throughout the bifurcating laminar flow
we have that $\psi_{YY} =\Upsilon<0$, while $\psi_Y(h)=\lb_-^\ast<0$, so that for nearby waves we will have $\psi_{YY}^s<0$ throughout the
flow and $\psi_Y^s<0$ near the free surface $Y=\eta_s(X)$. Recalling that $\eta_s'(X)<0$ for $X \in (0,L/2)$, by differentiating the relation $\psi^s(X,\,\eta_s(X))=0$,
valid due to (2.2b), we deduce that $\psi_X^s(X,\,\eta_s(X))<0$ for $X \in (0,L/2)$. On the other hand, $\psi_X^s(X,0)=0$ for all $X \in \R$ due
to (2.2c), while the symmetry properties ensure $\psi_X^s(0,Y)=0$ for $0 \le Y \le \eta_s(0)$ and $\psi_X^s(L/2,Y)=0$ for
$0 \le Y \le \eta_s(L/2)$. The function $\psi_X^s$ being harmonic in the domain $\{(X,Y):\ 0<X<L/2,\,0<Y<\eta_s(X)\}$ by (2.2a), the
maximum principle permits us to deduce that $\psi_X^s(X,Y)<0$ throughout the domain, while Hopf's maximum principle yields
\begin{equation}\label{nm3}
\psi_{XY}^s(X,0) <0\,,\qquad X \in (0,L/2)\,.
\end{equation}
To elucidate the behaviour of $\psi_Y^s$ in the closure of the domain $\{(X,Y):\ -L/2<X<L/2,\,0<Y<\eta_s(X)\}$, note that (2.21) yields
\begin{equation}\label{nm4}
\psi_Y^s(U^s(x,-kh),\,0)=-\,\frac{\Upsilon\,skh\,\cos(x)}{\sinh(kh)}+o(s)\qquad\text{in $C^{1,\alpha}_{2\pi}(\R)$,}
\end{equation}
since $V^s(x,-kh)=0$ by (2.9c), while by (\ref{nm1}) and (\ref{nm2}),\[ V_Y^s(x,-kh)=\frac{1}{k} + s\,\frac{\cos(x)}{\sinh(kh)} + o(s)\qquad\text{in $C^{1,\alpha}_{2\pi}(\R)$,} \] and
\[\zeta_Y^s(x,-kh)=-\Upsilon\,sh\,\frac{\cos(x)}{\sinh(kh)} + o(s)\qquad\text{ in $C^{1,\alpha}_{2\pi}(\R)$}.\] For $x=0$ and $x=\pm \pi$ in (\ref{nm4}) we
get $\psi_Y^s(0,0)>0$ and $\psi_Y^s(\pm L/2,0)<0$, respectively. Taking into account (\ref{nm3}) and the fact that $X \mapsto \psi^s(X,0)$ is
even, we deduce the existence of some $X_0 \in (0,L/2)$ with $\psi_Y^s(\pm X_0,0)=0$ and $\psi_Y^s(X,0)>0$ for
$X \in (-X_0,X_0)$, $\psi_Y^s(X,0)<0$ for $X \in [-L/2,-X_0) \cup (X_0,L/2]$. Denote the points $(-X_0,0)$ and $(X_0,0)$ by $A$ and $B$, respectively.
Along the vertical segment $\{(X,Y):\ 0 \le Y \le \eta_s(X)\}$ we know that $\psi_{YY}^s(X,Y)<0$, with $\psi_Y^s(X,\eta_s(X))<0$. Thus
$\psi_Y^s(X,Y)<0$ for all $X \in  [-L/2,-X_0] \cup [X_0,L/2]$ and $Y \in (0,\eta_s(X)]$. On the other hand, for every $X \in (-X_0,X_0)$
there exists a unique $Y_0^s(X) \in (0,\,\eta_s(X))$ such that $\psi_Y^s(X,\,Y_0^s(X))=0$, with $\psi_Y^s(X,Y) <0$ for $Y \in (Y_0^s(X),\,\eta_s(X)]$ and
$\psi_Y^s(X,Y) >0$ for $Y \in [0,\,Y_0^s(X))$. Denote the stagnation point $(0,\,Y_0^s(0))$ by $C$. The curve $X \mapsto Y_0^s(X)$ with $X \in [-X_0,\,X_0]$ is
the critical layer and $A$, $B$, $C$ are the stagnation points of the flow in the closure of the domain $\{(X,Y):-L/2<X<L/2,\,0<Y<\eta_s(X)\}$. Recall from (2.2) that
$\psi^s(X,\,\eta_s(X))=0$ while $\psi^s(X,0)=-m(s)>0$. For a fixed $X \in [-L/2,\,-X_0] \cup [X_0,\,L/2]$, the function $Y \mapsto \psi^s(X,Y)$ is strictly decreasing
on $[0,\,\eta_s(X)]$, while for $X \in (-X_0,X_0)$, the function $Y \mapsto \psi^s(X,Y)$ attains its maximum $M^s(X)>-m(s)$ on $[0,\,\eta_s(X)]$ at $y=Y_0^s(X)$, being
strictly monotone on either side of $Y_0^s(X)$. These considerations suffice to infer the full qualitative flow pattern -- see Figure 7. In particular, the stagnation
point $C$ is surrounded by closed streamlines.\end{proof}

\section{Further results and conjectures}

In a sequel to this paper, we will prove the following results:
\begin{itemize}
\item If ${\mathcal K}$ becomes unbounded in $\R\times\R\times C^{2,\alpha}_{2\pi}({\mathbb R})$, then $v'$ is
unbounded in $L^2_{2\pi}({\mathbb R})$;
\item For all waves in ${\mathcal K}$, the free surface ${\mathcal S}$ is a real-analytic curve.
\end{itemize}
Furthermore, we have a conjecture about the furthest boundary of the
global curve ${\mathcal K}$ that is more specific than Theorem 5.  Our conjecture is
that at this boundary we reach
\begin{itemize}
\item either a wave with a stagnation point and a corner of $120^\circ$
at its crest whose surface may be overhanging or a graph (see Figure 8)
\item or a wave that has no stagnation point but its surface is overhanging with
self-intersections on the trough line (see Figure 9).
\end{itemize}
This conjecture is supported by some analysis and by the numerical
simulations in [21,49].

It would also be interesting to extend our analysis in Section 5 of
the nature of a flow beneath the wave profile from the case of
small-amplitude waves to larger waves.

\begin{figure}\label{lw1}   \begin{center}
$\begin{array}{cc}
\includegraphics[width=3.3in]{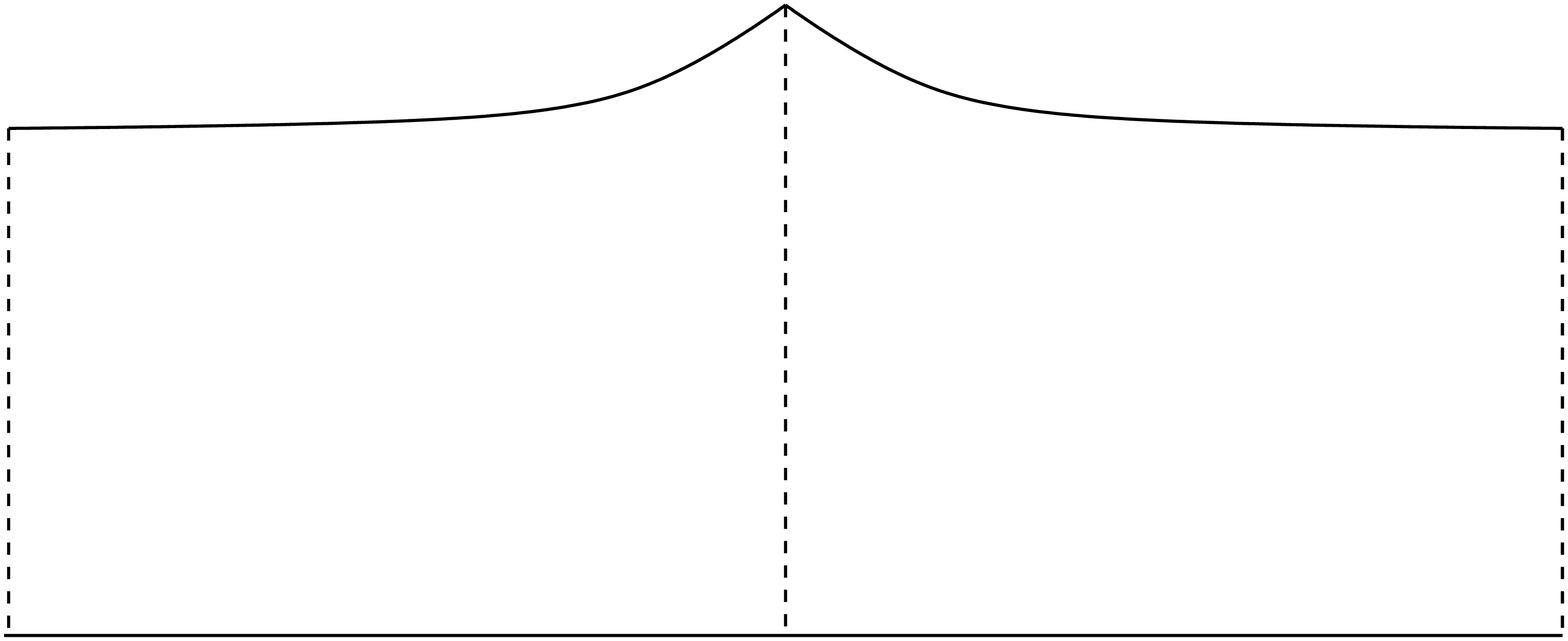} &     \includegraphics[scale=0.19]{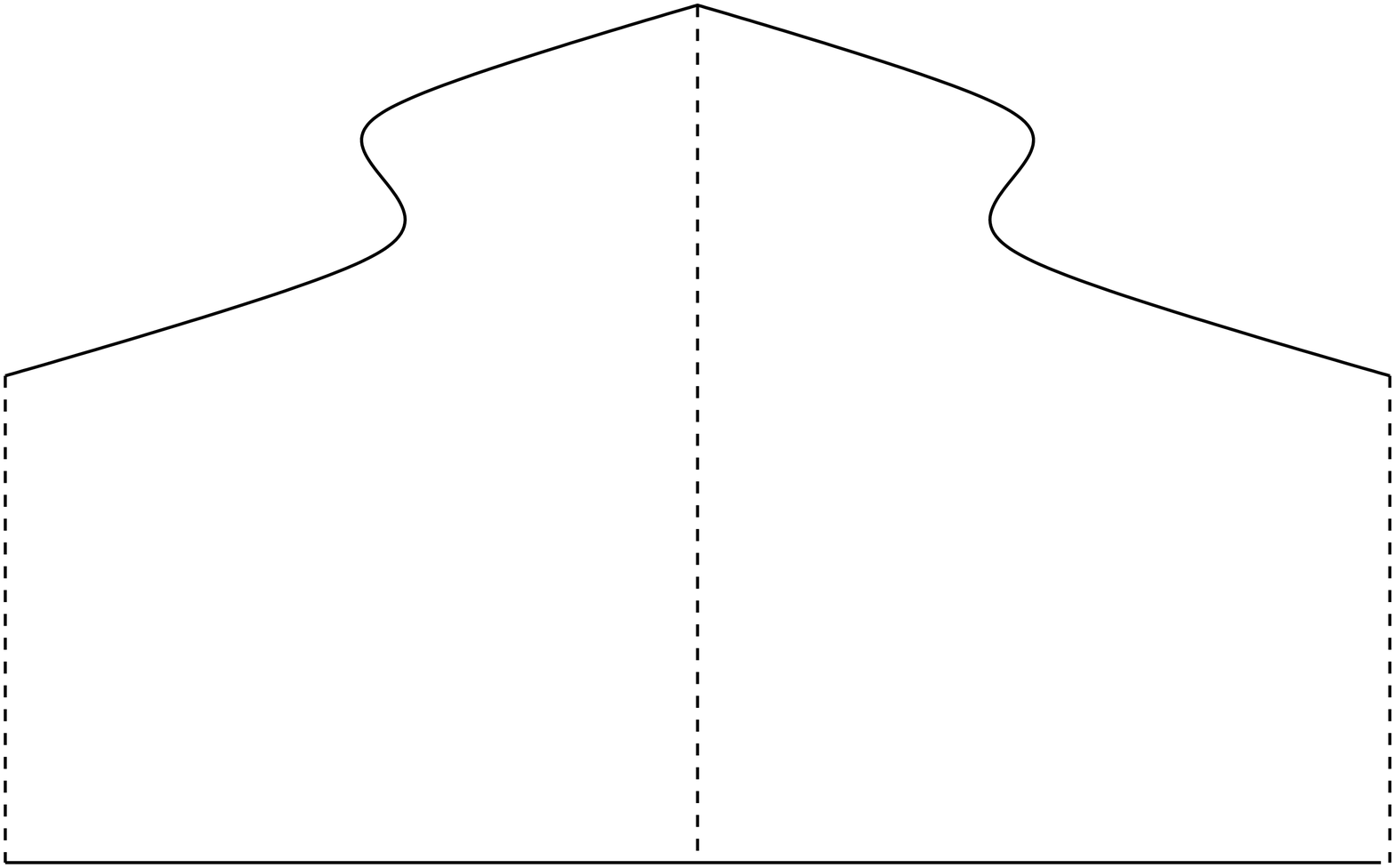}   \end{array}$
\end{center}
\caption{Waves with stagnation points and corners of $120^\circ$ at their crests: overhanging profiles (on
the right) and profiles that are graphs (on the left).}     \end{figure}

\begin{figure}\label{lw3}   \centering
\includegraphics[scale=0.2]{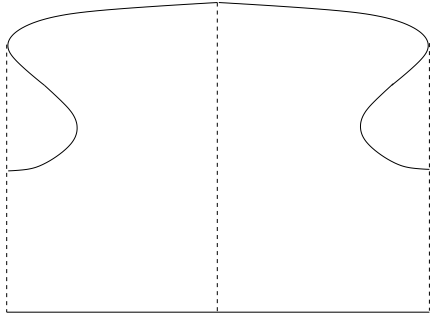}
\caption{Overhanging wave with self-intersections on the trough line.}   \end{figure}

\begin{appendix}
\numberwithin{equation}{section}

\section{The periodic Hilbert transform $\C_d$ on a strip}

We discuss the conjugation and Dirichlet-Neumann operators acting on periodic functions on a strip.
The Dirichlet-Neumann operator
$\mcg_{d}$ for the strip $\mcr_{d}$ is defined by
\begin{equation}\label{G}
\big(\mcg_d(w)\big)(x)=W_y(x,0),\qquad w \in C^{p,\alpha}_{2\pi}(\R)\quad\hbox{with}\quad p \ge 1 \quad\hbox{an integer},
\end{equation}
where $W \in C_{2\pi}^{p,\alpha}(\overline{\mcr_d})$ is the unique
solution to the boundary-value problem
\begin{equation}\label{BC}
\left\{\begin{array}{l}
\Delta W =0\quad\hbox{in}\quad \mcr_d,\\[0.1cm]
W(x,0)=w(x),\qquad x \in \R,\\[0.1cm]
W(x,-d)=0,\qquad x \in \R\,.
\end{array}\right.
\end{equation}
$\mcg_{d}$ is a bounded linear operator from $C^{p,\alpha}_{2\pi}(\R)$ to $C^{p-1,\alpha}_{2\pi}(\R)$, given by
$$\big(\mcg_d(w)\big)(x)= \frac{[w]}{d}\,+\,\sum_{n=1}^\infty na_n \coth (nd)\cos(nx)+\sum_{n=1}^\infty nb_n \coth (nd)\sin(nx)$$
where
$w \in C^{p,\alpha}_{2\pi}(\R)$ has the Fourier series expansion
$$w(x)=[w]\,+\,\sum_{n=1}^\infty a_n \cos (nx)+\sum_{n=1}^\infty b_n \sin (nx).$$
						The conjugation operator $\mcc_d$ is defined as
\begin{equation}\label{hfs}
\big(\mcc_d(w)\big)(x)=\sum_{n=1}^\infty a_n \coth
 (nd)\sin(nx)\,-\,\sum_{n=1}^\infty b_n\coth (nd)\cos (nx)\,,
 \end{equation}
 for $2\pi$-periodic functions $w \in C^{p,\alpha}_{2\pi,\circ}(\R)$ of zero mean, $[w]=0$, having the Fourier series expansion
$$w(x)=\sum_{n=1}^\infty a_n \cos (nx)\,+\,\sum_{n=1}^\infty b_n \sin (nx)\,.$$
For any integer $p \ge 0$ and any $\alpha \in (0,1)$, the operator $\mcc_d$ is a bounded invertible operator from the class $C^{p,\alpha}_{2\pi,\circ}(\R)$ into itself.
The two operators are related by means of the identity
\begin{equation}\label{CG}
\mcg_d(w)=\frac{[w]}{d}\,+\,\Big(\mcc_d(w\,-\,[w])\Big)'=\frac{[w]}{d}\,+\,\mcc_d(w'),\qquad w \in C_{2\pi}^{p,\alpha}(\R),
\end{equation}
that holds for all integers $p \ge 1$.    Given $W$ as in \eqref{BC},
let $Z \in C_{2\pi}^{p,\alpha}(\overline{\mcr_d})$ be the harmonic function in $\mcr_d$,
uniquely determined up to a constant, such that $Z+iW$ is holomorphic in $\mcr_d$. Then
$Z(x,y)=\displaystyle\frac{[w]}{d}\,x+Z_0(x,y)$ throughout $\mcr_d$, for a harmonic function $(x,y) \mapsto Z_0(x,y)$ that is
$2\pi$-periodic in the $x$-variable (see \cite{CV}). The function $Z_0$ being unique up to an additive constant, we normalize it by
requiring that $x \mapsto Z_0(x,0)$ has zero mean over one period. Then $x \mapsto Z_0(x,y)$ has zero mean for
every $y \in [-d,0]$. The restriction of this particular harmonic conjugate of $W$ to $y=0$ is given by
\begin{equation}\label{H}
Z(x,0)=\frac{[w]}{d} \,x+\mcc_d(w-[w]),\qquad x \in \bdr\,.
\end{equation}

Let $L^2_{2\pi,\circ}(\bdr)$ be the space of $2\pi$-periodic locally square
integrable functions of one real variable, with zero mean over one period.
The operator $\mcc_d$ can be extended by
complex-linearity to complex-valued functions in
$L^2_{2\pi,\circ}(\bdr)$, being characterized by its action on the
trigonometric system $\{e^{int}\}_{n\in\mathbb{Z}\setminus\{0\}}$ as
\begin{equation}\label{c}
 \mcc_d(e^{int})=-{i}\,{\coth(nd)}\,e^{int},\qquad n \in \bdz
\setminus \{0\}.
\end{equation}
It is a skew-adjoint operator.
Let $\mcc$ denote the standard periodic Hilbert transform \cite{BT,To}, defined by
\begin{equation}\label{c1}
\mcc(e^{int})=-{i}\,\textnormal{sgn} (n)\,e^{int},\qquad n \in \bdz
\setminus \{0\}\,.
\end{equation}
It is well-known that  $\mcc$ has a pointwise almost everywhere representation
as a singular integral
\begin{equation}\label{ht}
\big(\mcc (w)\big)(t)=\frac{1}{2\pi}\,PV\,\int_{-\pi}^\pi
{\cot\,\left(\frac{t-s}{2}\right)}{w(s)}\,ds,
\end{equation}
where $PV$ denotes a principal value integral \cite{St}, which is instrumental in the investigation of the
structural properties of the operator $\mcc$. Writing
\be\label{cc}
\mcc_d=\mcc+\mathcal{K}_d,
\ee
we see that the operator $\mathcal{K}_d$ corresponds to the Fourier multiplier
operator on $L^2_{2\pi,\circ}(\bdr)$ given by
\begin{equation}\label{kap}
 \Big\{w = \sum_{n \in
\bdz\setminus\{0\}} c_n\,e^{int}\Big\} \mapsto \Big\{\sum_{n \in
\bdz\setminus \{0\}}
-i\,\textnormal{sgn}(n)\,\lambda_n\,c_n\,e^{int}\Big\}\,,
\end{equation}
with $\lambda_n=\displaystyle\frac{2}{e^{2|n|d}-1}$ for $|n| \ge 1$. Since
$\displaystyle\sum_{n\in\bdz\setminus\{0\}} |n|^{2p}\lambda_n^2<\infty$ for every integer $p \ge 0$, the
function $\kappa_d\in L^2_{2\pi,\circ}(\bdr)$ given by
\begin{equation}\label{kapad}
\kappa_d(t)=\sum_{n\in\bdz\setminus\{0\}}-i\,\textnormal{sgn}(n)\,\lambda_n
\,e^{int} =\sum_{n=1}^{\infty}2\lambda_n\sin(nt),
\quad{t\in\bdr},
\end{equation}
is of class $C^\infty$ (see \cite{DM}). From (\ref{kap}) we infer that $\mck_d(w)$ is the
convolution of $w$ with this smooth function $\kappa_d$, that is,
\begin{equation}\label{c2}
\big(\mck_d (w)\big)(t)=\frac{1}{2\pi}\int_{-\pi}^\pi \kappa_d
(t-s)\,w(s)\,ds,\qquad t\in\bdr.
\end{equation}
This representation is useful in establishing the following commutator estimate (see \cite{CV} for the proof).

\begin{lemma}\label{lm}
  If $f \in C^{j,\alpha}_{2\pi,\circ}(\bdr)$ and $g\in C^{j-1,\alpha}_{2\pi,\circ}(\bdr)$ with $j\in\N$ and $\alpha \in
(0,1)$, then  $\Big(f\,\mcc_d(g)-\mcc_d(fg)\Big) \in
C^{j,\delta}_{2\pi}(\bdr)$ for all $\delta \in (0,\alpha)$ (with the inequality $\delta<\alpha$ being sharp), and there exists a constant $C=C(j,\alpha,\delta)$ such that
\[||f\,\mcc_d(g)-\mcc_d(fg)||_{j,\delta}\leq C||f||_{j,\alpha}||g||_{j-1, \alpha}.\]
\end{lemma}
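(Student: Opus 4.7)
The plan is to exploit the decomposition $\mcc_d = \mcc + \mck_d$ from (\ref{cc}) and treat the two pieces separately. For the smooth piece, $\mck_d$ is convolution with the $C^\infty_{2\pi,\circ}$ kernel $\kappa_d$ of (\ref{kapad}), so repeated differentiation under the integral in (\ref{c2}) combined with the Leibniz rule immediately yields the bound $\|f\mck_d(g)-\mck_d(fg)\|_{j,\alpha}\leq C\|f\|_{j,\alpha}\|g\|_{j-1,\alpha}$, with no loss of exponent at all. All the real work lies in estimating the commutator
\[
\bigl([f,\mcc]g\bigr)(t) \;:=\; (f\mcc g)(t) - \mcc(fg)(t) \;=\; \frac{1}{2\pi}\,PV\!\int_{-\pi}^\pi \cot\!\Bigl(\frac{t-s}{2}\Bigr)\,[f(t)-f(s)]\,g(s)\,ds,
\]
where we used the singular-integral representation (\ref{ht}).

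For the base case $j=1$ (so $f\in C^{1,\alpha}$, $g\in C^{0,\alpha}$) I would Taylor-expand $f(t)-f(s)=f'(s)(t-s)+R(t,s)$, with $|R(t,s)|\leq C\|f'\|_{0,\alpha}|t-s|^{1+\alpha}$, and use that $\phi(u):=u\cot(u/2)$ extends to a $C^\infty$ periodic function. This splits $[f,\mcc]g$ into the sum of a smooth convolution $\tfrac{1}{2\pi}\bigl(\phi*(f'g)\bigr)$, which inherits the regularity of $\phi$, and a regular integral with bounded kernel $\cot((t-s)/2)R(t,s) = O(|t-s|^\alpha)$. Differentiating in $t$ and using $\partial_t R(t,s)=f'(t)-f'(s)$ produces kernels of size $O(|t-s|^{\alpha-1})$, weakly singular and therefore integrable. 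Classical H\"older estimates for periodic integral operators of Riesz-type order $\alpha-1$ applied to bounded functions then give $\partial_t[f,\mcc]g \in C^{0,\delta}_{2\pi}(\bdr)$ for every $\delta<\alpha$, with sharp strict loss at the endpoint; this establishes $[f,\mcc]g\in C^{1,\delta}$ with the desired product bound.

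For general $j\geq 1$ I would induct using the algebraic identity
\[
\partial_t\bigl([f,\mcc]g\bigr) \;=\; [f',\mcc]g \;+\; [f,\mcc]g',
\]
which follows from the fact that $\partial_t$ commutes with $\mcc$. When $f\in C^{j,\alpha}$ and $g\in C^{j-1,\alpha}$, both right-hand terms fit the inductive hypothesis at level $j-1$ (for $j\geq 2$, using the embedding $C^{j-1,\alpha}\hookrightarrow C^{j-2,\alpha}$ on the $g$-slot of the first term), landing in $C^{j-1,\delta}$; hence $[f,\mcc]g\in C^{j,\delta}$ with the claimed bound, the loss $\delta<\alpha$ being incurred only once rather than $j$ times.

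The main obstacle is the rigorous treatment of the term $[f,\mcc]g'$ at the base $j=1$, where $g$ is only H\"older and $g'$ is a distribution; the argument of the second paragraph bypasses this by never differentiating $g$ but instead differentiating the integral representation directly, absorbing the would-be $g'$ via the Taylor-plus-convolution decomposition of the kernel. A secondary technical point is verifying sharpness of $\delta<\alpha$, for which one can exhibit an explicit $f,g\in C^{0,\alpha}$ for which $[f,\mcc]g\notin C^{0,\alpha}$. These H\"older estimates for weakly singular periodic integrals are classical (of Privalov--Muskhelishvili type), and the detailed execution adapted to the present periodic strip setting is given in \cite{CV}.
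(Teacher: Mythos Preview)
Your proposal is correct and follows exactly the approach the paper has in mind: the paper does not actually prove this lemma but simply states it and refers to \cite{CV} for the proof, having set up precisely the decomposition $\mcc_d=\mcc+\mck_d$ of (\ref{cc})--(\ref{c2}) that you exploit. Your sketch---smoothing piece via the $C^\infty$ kernel $\kappa_d$, singular piece via the Privalov--Muskhelishvili-type estimate for $[f,\mcc]g$ through the representation (\ref{ht}), then induction on $j$---is the standard argument carried out in \cite{CV}, and your own closing sentence acknowledges this.
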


In order to complete the description of the periodic Hilbert transform in a strip, we now derive the analogue
of the fundamental formula (\ref{ht}).

\begin{lemma}
For any
$w \in C_{2\pi,\circ}^{1,\alpha}(\bdr)$,  we have
\begin{equation}\label{HE0}
\big(\mcc_d(w)\big)(x)=  \displaystyle\frac{1}{2d} \,PV\,\int_\bdr \Big\{ \tanh\Big(\frac{\pi s}{2d}\Big) +
\coth\Big(\frac{\pi(x-s)}{2d}\Big)\Big\}\,w(s)\,ds
\end{equation}
						and also
\begin{equation}\label{HE3}
\big(\mcc_d(w)\big)(x)= \frac{1}{2\pi d} \,\int_{-\pi}^\pi \Big(\pi g_d(s)+2s\Big)\,w(s)\,ds
+  \frac{1}{2d} \,PV\,\int_{-\pi}^\pi \beta_d(x-s)\,\,w(s)\,ds
\end{equation}
for some $2\pi$-periodic functions $g_d$ and $\beta_d$,
where $g_d$ is continuous on $\bdr$ and $\beta_d$ is continuous on $\bdr \setminus 2\pi{\mathbb Z}$,
while $s \mapsto \beta(s)-\displaystyle\coth\Big(\frac{\pi (s-2\pi k)}{2d}\Big)$
is continuous at $s=2\pi k$ with $k \in {\mathbb Z}$.
\end{lemma}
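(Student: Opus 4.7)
The plan is to derive \eqref{HE0} by a contour-integration argument on the strip $\mcr_d$, and then to obtain \eqref{HE3} by periodizing the $s$-variable in \eqref{HE0}.

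For \eqref{HE0}, I would let $W$ solve \eqref{BC} and $Z$ be its harmonic conjugate on $\mcr_d$ normalized so that, by \eqref{H} with $[w]=0$, $Z(x,0) = \mcc_d(w)(x)$. Setting $F := Z + iW$ and fixing $x_0 \in \bdr$, I would apply Cauchy's residue theorem to $F(z) H(z)$ along the boundary of $\{-R < \Re z < R,\ -d < \Im z < 0\}$ with the auxiliary kernel
\[
H(z) \ = \ \coth\!\Bigl(\frac{\pi(z - x_0 + i\epsilon)}{2d}\Bigr) \ - \ \tanh\!\Bigl(\frac{\pi(z + i\delta)}{2d}\Bigr), \qquad 0<\epsilon,\delta\ll 1.
\]
Inside $\overline{\mcr_d}$ the only pole of $H$ is the simple pole of $\coth$ at $z = x_0 - i\epsilon$ (of residue $2d/\pi$); the $i\delta$-shift moves the pole of $\tanh$ at $-id$ safely below the strip. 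Along horizontal lines the common $\pm 1$ asymptotes of $\coth$ and $\tanh$ cancel, so $H$ decays exponentially as $|\Re z| \to \infty$ and the two vertical side contributions vanish as $R \to \infty$. On the bottom line $y=-d$, the identities $\coth(u - i\pi/2) = \tanh(u)$ and $\tanh(u - i\pi/2) = \coth(u)$ interchange the roles of the two pieces of $H$. Letting $\epsilon,\delta \to 0^+$ next, the Plemelj--Sokhotski formula converts the $\coth$-singularity approaching $s=x_0$ on the top and the $\tanh$-singularity approaching $s=0$ on the bottom into principal-value integrals plus delta contributions. Taking the imaginary part of the resulting identity and invoking $W = w$ on the top and $W \equiv 0$ on the bottom (so $F|_{y=-d}$ is real and does not contribute to the imaginary part) produces \eqref{HE0} after using $\coth(\pi(s-x_0)/(2d)) = -\coth(\pi(x_0-s)/(2d))$.

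For \eqref{HE3}, I would periodize \eqref{HE0}: partition $\bdr$ into $\bigcup_{k\in\mathbb Z} [(2k{-}1)\pi, (2k{+}1)\pi]$, shift each piece back to $(-\pi,\pi)$ using the $2\pi$-periodicity of $w$, and interchange summation and integration. Summing the translates $\coth(\pi(x - s - 2\pi k)/(2d))$ produces $\beta_d(x-s)$: the $k=0$ term carries the $\coth$-type singularity at $s=x$, while the $k \ne 0$ translates are smooth near $s = x$ and add up to a continuous correction, giving $\beta_d$ the advertised pole structure at each $2\pi k$. Summing the translates $\tanh(\pi(s + 2\pi k)/(2d))$, after subtracting off the $\mathrm{sgn}$-asymptotes to obtain an absolutely convergent series, yields the continuous function $g_d$; the subtracted $\mathrm{sgn}$-asymptotes, telescoped across one period, furnish the linear term $2s$ appearing in \eqref{HE3}.

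The main obstacle is the joint handling of the two individually non-decaying pieces $\tanh(\pi s/(2d))$ and $\coth(\pi(x_0-s)/(2d))$: each approaches $\pm 1$ at infinity, but their sum decays exponentially, so integrability and the PV sense of \eqref{HE0} must be established for the combined integrand rather than for each summand separately. The same comparison against $\mathrm{sgn}$ reappears in the periodization step as the device converting the conditionally convergent series of $\tanh/\coth$ translates into the explicit smooth-and-singular decomposition $g_d,\beta_d$. A secondary care point is to ensure that the bookkeeping of residue plus boundary deltas is compatible with the normalization $[Z(\cdot,0)]=0$ embedded in \eqref{H}, so that no spurious additive constant survives in \eqref{HE0}.
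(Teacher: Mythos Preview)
Your route for \eqref{HE0}---residue calculus directly on $\mcr_d$ with a tailored kernel $H$---is genuinely different from the paper's, which conformally maps the reflected strip $\mcr_{2d}$ onto the unit disc, applies the Schwarz integral formula there, and changes variables back. The periodization step for \eqref{HE3} is essentially the same in both approaches.

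The real gap in your argument is precisely what you label a ``secondary care point'': the bottom boundary \emph{does} contribute to the imaginary part. After the $\coth/\tanh$ interchange on $y=-d$, your kernel becomes $\tanh\bigl(\tfrac{\pi(s-x_0+i\epsilon)}{2d}\bigr)-\coth\bigl(\tfrac{\pi(s+i\delta)}{2d}\bigr)$, and as $\delta\to0^+$ the Plemelj limit of the $\coth$ piece produces, besides the principal value, the imaginary distribution $+2id\,\delta(s)$. Multiplied by the real $F(s-id)=Z(s,-d)$, this yields $2d\,Z(0,-d)$ in the imaginary part of the bottom integral. Carrying your computation through then gives
\[
(\mcc_d w)(x_0)\;=\;Z(0,-d)\;+\;\frac{1}{2d}\,PV\!\int_{\bdr}\Bigl\{\tanh\Bigl(\tfrac{\pi s}{2d}\Bigr)+\coth\Bigl(\tfrac{\pi(x_0-s)}{2d}\Bigr)\Bigr\}w(s)\,ds,
\]
and $Z(0,-d)$ is not zero in general: for $w(x)=\sin x$ one computes $Z(x,y)=-\cos x\,\dfrac{\cosh(y+d)}{\sinh d}$, hence $Z(0,-d)=-1/\sinh d\neq 0$. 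So the assertion that ``$F|_{y=-d}$ is real and does not contribute to the imaginary part'' fails because of the very delta term you acknowledge, and the normalization $[Z(\cdot,0)]=0$ does not by itself kill this constant. The same additive constant---the value of the conjugate at the conformal centre---is implicit in the paper's Schwarz-formula derivation as well; in either route, reconciling it with the stated \eqref{HE0} is the substantive step, not mere bookkeeping.
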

						We do not pursue
the quest of providing an explicit formula for $g_d$ and $\beta_d$ since the intricacy of
the hypergeometric point evaluations available for $g_d(0)$ and $\beta_d(0)$  is indicative of the level of complexity involved \cite{Dieck}.
To illustrate the usefulness of the representation (\ref{HE3}), note that term by term differentiation of their series
representations from above yield that $g$ and $\beta$ are decreasing, respectively increasing, on $(-\pi,\pi)$. This information
is not immediately obtained from (\ref{kapad}).

\begin{figure} \centering    \includegraphics[width=12cm]{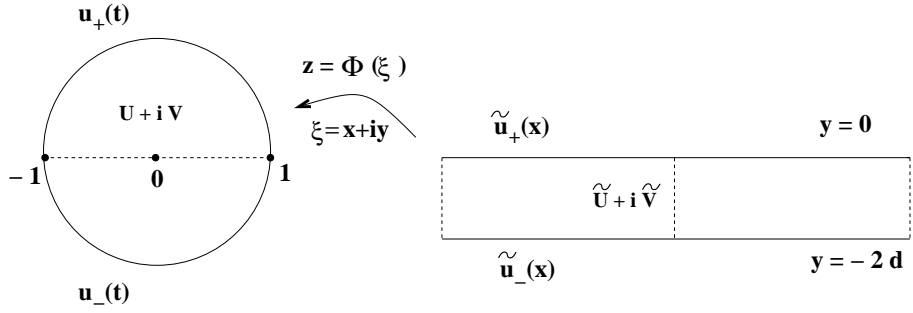}
\caption{\small The correspondence between boundary values of harmonic functions on a strip and on the unit disk.}  \end{figure}

\begin{proof}
Given $d>0$, the function $\phi(\xi)=\displaystyle\frac{e^{\frac{\pi}{2d}\,(\xi+id)}-1}{e^{\frac{\pi}{2d}\,(\xi+id)}+1}=\tanh\Big(\frac{\pi}{4d}\,(\xi+id)\Big)$
with $\xi=x+iy$ maps the horizontal strip $\mcr_{2d}$ conformally onto the unit disc ${\mathbb D}=\{z \in \bdc: \ |z| <1\}$;
see Figure 10. Let
$(\tilde{U}+i\tilde{V})(\xi)$ be a {\it bounded} holomorphic function in $\mcr_{2d}$, admitting a $C^{1,\alpha}_{loc}(\overline{\mcr_{2d}})$ extension, and denote by
$\tilde{u}_\pm + i\,\tilde{v}_\pm$ the boundary values on the horizontal lines $y=0$ and $y=-2 d$, respectively. For $z=\phi(\xi)$ with $\xi \in \mcr_{2d}$,
let $(U+iV)(z)=(\tilde{U}+i\tilde{V})(\xi)$ be the corresponding holomorphic function in ${\mathbb D}$, whose boundary
values at $z=e^{it}$ on the upper/lower semicircles that make up the boundary of ${\mathbb D}$ we denote by
$u_\pm(t) + i v_\pm (t)$.

Now the bounded holomorphic function $U+iV$ in ${\mathbb D}$ can be represented by Poisson's formula
\begin{equation}\label{po}
U(z)+iV(z) =  \displaystyle\frac{1}{2\pi} \int_0^\pi \frac{e^{it}+z}{e^{it}-z}\,u_+(t)\,dt +
\displaystyle\frac{1}{2\pi} \int_{-\pi}^0 \frac{e^{it}+z}{e^{it}-z}\,u_-(t)\,dt,\qquad z \in {\mathbb D}\,.
\end{equation}
For $t \in (0,\pi)$, if $e^{it}=\phi(s)$ with $s \in \bdr$, then
$$e^{it}=\tanh\Big(\frac{\pi}{4d}\,(s+id)\Big)=\frac{\sinh\big(\frac{\pi}{2d}s\big) +i}{\cosh\big(\frac{\pi}{2d}s\big)}$$
yields $\cos(t)=\tanh\big(\frac{\pi}{2d}\,s\big)$ and $\sin(t)=\displaystyle\frac{1}{\cosh\big(\frac{\pi}{2d}\,s\big)}$, so that
$t=2\,\arctan\big(e^{-\frac{\pi}{2d}\,s}\big)$. On the other hand, for $t \in (-\pi,0)$, if $e^{it}=\phi(s-2id)$ with $s \in \bdr$, then
$$e^{it}=\tanh\Big(\frac{\pi}{4d}\,(s-id)\Big)=\frac{\sinh\big(\frac{\pi}{2d}s\big) -i}{\cosh\big(\frac{\pi}{2d}s\big)}$$
yields $\cos(t)=\tanh\big(\frac{\pi}{2d}\,s\big)$ and $\sin(t)=-\,\displaystyle\frac{1}{\cosh\big(\frac{\pi}{2d}\,s\big)}$, so that
$t=-\,2\,\arctan\big(e^{-\frac{\pi}{2d}\,s}\big)$. Thus the change of variables $t=2\,\arctan\big(e^{-\frac{\pi}{2d}\,s}\big)$ for $t \in (0,\pi)$
and $t=-\,2\,\arctan\big(e^{-\frac{\pi}{2d}\,s}\big)$
for $t \in (-\pi,0)$ transforms (\ref{po}) into
\begin{eqnarray}\label{po2}
U(z)+iV(z) &=& \displaystyle\frac{1}{4d} \int_\bdr \frac{\tanh\big(\frac{\pi}{4d}\,(s+id)\big)
+\tanh\big(\frac{\pi}{4d}\,(x+iy+id)\big)}{\tanh\big(\frac{\pi}{4d}\,(s+id)\big)-\tanh\big(\frac{\pi}{4d}\,(x+iy+id)\big)}\,
\frac{1}{\cosh\big(\frac{\pi}{4d}\,s\big)}\,\tilde{u}_+(s)\,ds \nonumber\\[0.1cm]
&&+ \displaystyle\frac{1}{4d} \int_\bdr \frac{\tanh\big(\frac{\pi}{4d}\,(s-id)\big)
+\tanh\big(\frac{\pi}{4d}\,(x+iy+id)\big)}{\tanh\big(\frac{\pi}{4d}\,(s-id)\big)-\tanh\big(\frac{\pi}{4d}\,(x+iy+id)\big)}\,
\frac{1}{\cosh\big(\frac{\pi}{4d}\,s\big)}\,\tilde{u}_-(s)\,ds\\
&=& \displaystyle\frac{1}{4d} \int_\bdr \frac{\sinh\big(\frac{\pi}{4d}\,(s+x+2id+iy)\big)}{\sinh\big(\frac{\pi}{4d}\,(s-x-iy)\big)}\,
\frac{1}{\cosh\big(\frac{\pi}{2d}\,s\big)}\,\tilde{u}_+(s)\,ds \nonumber \\[0.1cm]
&& +  \displaystyle\frac{1}{4d} \int_\bdr \frac{\sinh\big(\frac{\pi}{4d}\,(s+x+iy)\big)}{\sinh\big(\frac{\pi}{4d}\,(s-x-2id-iy)\big)}\,
\frac{1}{\cosh\big(\frac{\pi}{2d}\,s\big)}\,\tilde{u}_-(s)\,ds \nonumber
\end{eqnarray}
for $z=\phi(x+iy) \in {\mathbb D}$ with $(x,y) \in \mcr_{2d}$. Note that
$$\left\{\begin{array}{l}
\sinh(a+ib)=\sinh(a)\,\cos(b)\,+\,i\,\cosh(a)\,\sin(b),\quad \cosh(a+ib)=\cosh(a)\,\cos(b)\,+\,i\,\sinh(a)\,\sin(b)\,,\\[0.1cm]
\cosh(2a)=2\,\cosh^2(a)-1=2\,\sinh^2(a)+1,\qquad \cos(2a)=2\,\cos^2(a)-1=1-2\,\sin^2(a)\,,\\[0.1cm]
2\,\sinh(\xi_1)\,\sinh(\xi_2)=\cosh(\xi_1+\xi_2)\,-\,\cosh(\xi_1-\xi_2)\,,
\end{array}\right.$$
for $a,\,b \in \bdr$ and $\xi_1,\,\xi_2 \in \bdc$, respectively. Using these identities, we get
\begin{eqnarray}\label{simp1}
&&\displaystyle\frac{\sinh\big(\frac{\pi}{4d}\,(s+x+2id+iy)\big)}{\sinh\big(\frac{\pi}{4d}\,(s-x-iy)\big)}=
\frac{\sinh\big(\frac{\pi}{4d}\,(s+x+2id+iy)\big)\,\sinh\big(\frac{\pi}{4d}\,(s-x+iy)\big)}{\sinh^2\big(\frac{\pi}{4d}\,(s-x)\big)\cos^2\big(\frac{\pi}{4d}\,y\big)
+\cosh^2\big(\frac{\pi}{4d}\,(s-x)\big)\sin^2\big(\frac{\pi}{4d}\,y\big)} \nonumber \\[0.1cm]
&&=\displaystyle\frac{2\,\Big\{\cosh\big(\frac{\pi}{2d}\,(s+id+iy)\big)- \cosh\big(\frac{\pi}{2d}\,(x+id)\big)\Big\}}{\Big\{\cosh\big(\frac{\pi}{2d}\,(s-x)\big)-1\Big\}\Big\{\cos\big(\frac{\pi}{2d}\,y\big)+1\Big\} +
\Big\{\cosh\big(\frac{\pi}{2d}\,(s-x)\big)+1\Big\}\Big\{1-\cos\big(\frac{\pi}{2d}\,y\big)\Big\}} \nonumber \\[0.1cm]
&&=\displaystyle\frac{\cosh\big(\frac{\pi}{2d}\,(s+id+iy)\big)- \cosh\big(\frac{\pi}{2d}\,(x+id)\big)}{\cosh\big(\frac{\pi}{2d}\,(s-x)\big)-\cos\big(\frac{\pi}{2d}\,y\big)}
=\displaystyle\frac{\cosh\big(\frac{\pi}{2d}\,s\big)\,\cos\big(\frac{\pi}{2d}\,(y+d)\big) -
\cosh\big(\frac{\pi}{2d}\,x\big)\,\cos\big(\frac{\pi}{2d}\,d\big)}{\cosh\big(\frac{\pi}{2d}\,(s-x)\big)-\cos\big(\frac{\pi}{2d}\,y\big)} \\[0.1cm]
&&\qquad +\, i\,\displaystyle\frac{\sinh\big(\frac{\pi}{2d}\,s\big)\,\sin\big(\frac{\pi}{2d}\,(y+d)\big) - \sinh\big(\frac{\pi}{2d}\,x\big)\,\sin\big(\frac{\pi}{2d}\,d\big)}{\cosh\big(\frac{\pi}{2d}\,(s-x)\big)-\cos\big(\frac{\pi}{2d}\,y\big)}\nonumber\\[0.1cm]
&&=-\,\displaystyle\frac{\cosh\big(\frac{\pi}{2d}\,s\big)\,\sin\big(\frac{\pi}{2d}\,y\big) - i \,\Big\{\sinh\big(\frac{\pi}{2d}\,s\big)\,\cos\big(\frac{\pi}{2d}\,y\big) -
\sinh\big(\frac{\pi}{2d}\,x\big)\Big\}}{\cosh\big(\frac{\pi}{2d}\,(x-s)\big)-\cos\big(\frac{\pi}{2d}\,y\big)}\,.\nonumber
\end{eqnarray}
Similarly, we get
\begin{equation}\label{simp2}
\frac{\sinh\big(\frac{\pi}{4d}\,(s+x+iy)\big)}{\sinh\big(\frac{\pi}{4d}\,(s-x-2id-iy)\big)}=\,-\,
\displaystyle\frac{\cosh\big(\frac{\pi}{2d}\,s\big)\,\sin\big(\frac{\pi}{2d}\,y\big) - i \,\Big\{\sinh\big(\frac{\pi}{2d}\,s\big)\,\cos\big(\frac{\pi}{2d}\,y\big) +
\sinh\big(\frac{\pi}{2d}\,x\big)\Big\}}{\cosh\big(\frac{\pi}{2d}\,(x-s)\big)+\cos\big(\frac{\pi}{2d}\,y\big)}\,.
\end{equation}
Taking (\ref{simp1}) and (\ref{simp2}) into account, from (\ref{po2}) we deduce that for $z=\phi(x+iy) \in {\mathbb D}$ with
$x+iy \in \mcr_{2d}$, we can express $U(z)+iV(z)$ as
\begin{eqnarray}\label{po3}
&& -\,\displaystyle\frac{1}{4d} \int_\bdr \frac{\sin\big(\frac{\pi}{2d}\,y\big)}{\cosh\big(\frac{\pi}{2d}\,(x-s)\big)-\cos\big(\frac{\pi}{2d}\,y\big)}\,\tilde{u}_+(s)\,ds
+ \displaystyle\frac{i}{4d} \int_\bdr \frac{\tanh\big(\frac{\pi}{2d}\,s\big)\,\cos\big(\frac{\pi}{2d}\,y\big) -
\frac{\sinh(\frac{\pi}{2d}\,x)}{\cosh(\frac{\pi}{2d}\,s)}}{\cosh\big(\frac{\pi}{2d}\,(x-s)\big)-\cos\big(\frac{\pi}{2d}\,y\big)}\,\tilde{u}_+(s)\,ds
\\[0.1cm]
&& -\,\displaystyle\frac{1}{4d} \int_\bdr \frac{\sin\big(\frac{\pi}{2d}\,y\big)}{\cosh\big(\frac{\pi}{2d}\,(x-s)\big)+\cos\big(\frac{\pi}{2d}\,y\big)}\,\tilde{u}_-(s)\,ds
+\, \displaystyle\frac{i}{4d} \int_\bdr \frac{\tanh\big(\frac{\pi}{2d}\,s\big)\,\cos\big(\frac{\pi}{2d}\,y\big) +
\frac{\sinh(\frac{\pi}{2d}\,x)}{\cosh(\frac{\pi}{2d}\,s)}}{\cosh\big(\frac{\pi}{2d}\,(x-s)\big)+\cos\big(\frac{\pi}{2d}\,y\big)}\,\tilde{u}_-(s)\,ds \,. \nonumber
\end{eqnarray}
						As $y \uparrow 0$ the imaginary part of (\ref{po3}) tends to
$$\tilde{v}_+(x) =\displaystyle\frac{1}{4d} \,PV\,\int_\bdr \frac{\tanh\big(\frac{\pi}{2d}\,s\big)\, -
\frac{\sinh(\frac{\pi}{2d}\,x)}{\cosh(\frac{\pi}{2d}\,s)}}{\cosh\big(\frac{\pi}{2d}\,(x-s)\big)-1}\,\tilde{u}_+(s)\,ds
+ \displaystyle\frac{1}{4d} \int_\bdr \frac{\tanh\big(\frac{\pi}{2d}\,s\big)\, +
\frac{\sinh(\frac{\pi}{2d}\,x)}{\cosh(\frac{\pi}{2d}\,s)}}{\cosh\big(\frac{\pi}{2d}\,(x-s)\big)+1}\,\tilde{u}_-(s)\,ds \,,$$
the principal value integral being due to the singularity at $x=s$. Since
$$\tanh\big(\frac{\pi}{2d}\,s\big)\, \mp
\frac{\sinh(\frac{\pi}{2d}\,x)}{\cosh(\frac{\pi}{2d}\,s)} = \mp\,\Big\{ \tanh\big(\frac{\pi}{2d}\,s\big)\,\Big\{\cosh\big(\frac{\pi}{2d}\,(x-s)\big) \mp 1\Big\} \mp
\sinh\big(\frac{\pi}{2d}\,(x-s)\big) \Big\}\,,$$
we deduce that
\begin{eqnarray*}\label{tv}
\tilde{v}_+(x) &=& -\,\displaystyle\frac{1}{4d}  \,PV\,\int_\bdr \Big\{ \tanh\big(\frac{\pi}{2d}\,s\big) +
\frac{\sinh(\frac{\pi}{2d}\,(x-s))}{\cosh\big(\frac{\pi}{2d}\,(x-s)\big)-1}\Big\}\,\tilde{u}_+(s)\,ds \\[0.15cm]
&& +\,\displaystyle\frac{1}{4d} \int_\bdr \Big\{ \tanh\big(\frac{\pi}{2d}\,s\big) + \frac{\sinh(\frac{\pi}{2d}\,(x-s))}{\cosh\big(\frac{\pi}{2d}\,(x-s)\big)+1}\Big\}\,
\tilde{u}_-(s)\,ds \nonumber \\[0.15cm]
&=& -\displaystyle\frac{1}{4d}  \,PV\,\int_\bdr \Big\{ \tanh\big(\frac{\pi}{2d}\,s\big) + \coth\big(\frac{\pi}{4d}\,(x-s)\big)\Big\}\,\tilde{u}_+(s)\,ds
\nonumber \\[0.15cm]
&& +\,\displaystyle\frac{1}{4d} \int_\bdr \Big\{ \tanh\big(\frac{\pi}{2d}\,s\big) + \tanh\big(\frac{\pi}{4d}\,(x-s)\big)\Big\}\,\tilde{u}_-(s)\,ds \,.\nonumber
\end{eqnarray*}
Application of the same argument to the bounded holomorphic function $i(\tilde U +i\tilde V)= -\tilde V +i\tilde U$ leads to the representation
\begin{eqnarray}\label{tvxxx}
\tilde{u}_+(x)
&=& \displaystyle\frac{1}{4d}  \,PV\,\int_\bdr \Big\{ \tanh\big(\frac{\pi}{2d}\,s\big) + \coth\big(\frac{\pi}{4d}\,(x-s)\big)\Big\}\,\tilde{v}_+(s)\,ds
 \\[0.15cm]
&& -\,\displaystyle\frac{1}{4d} \int_\bdr \Big\{ \tanh\big(\frac{\pi}{2d}\,s\big) + \tanh\big(\frac{\pi}{4d}\,(x-s)\big)\Big\}\,\tilde{v}_-(s)\,ds \,.\nonumber
\end{eqnarray}

 Consider now a holomorphic function $\tilde U + i\tilde V$ in ${\mathcal R}_{2d}$, admitting a $C^{1,\alpha}_{loc}(\overline{\mcr_{2d}})$ extension, for which  $\tilde{V}$ is $2\pi$-periodic in the $x$-variable throughout $\overline{\mcr_{2d}}$. In contrast to $\tilde{V}$, the harmonic conjugate $\tilde{U}$ of $-\tilde V$ need not be in general
$2\pi$-periodic in the $x$-variable. Indeed, using the Cauchy-Riemann equations and the $2\pi$-periodicity of $\tilde{V}$ in
the $x$-variable, we see that the function $(x,y) \mapsto \tilde{U}(x+2\pi,y)-\tilde{U}(x,y)$ equals to a constant $K$ throughout $\mcr_{2d}$.
Since the expression
$$\frac{d}{dy}\,\int_{-\pi}^{\pi} \tilde{V}(\tau,y)\,d\tau = \int_{-\pi}^{\pi} \tilde{V}_y(\tau,y)\,d\tau = \int_{-\pi}^{\pi} \tilde{U}_x(\tau,y)\,d\tau \\[0.2cm]
= \tilde{U}(\pi,y)-\tilde{U}(-\pi,y),\qquad -2d < y <0,$$
is independent of $y$, integration on $[-2d,0]$ yields $K=\displaystyle\frac{2\pi}{2d}([\tilde{v}_+]-[\tilde{v}_-])$, where $[\cdot]$ denotes the average of a $2\pi$-periodic function.
Consequently, one may write
\begin{equation}\label{tU}
\tilde{U}(x,y)+i \tilde V(x,y)= \frac{[\tilde{v}_+] - [\tilde{v}_-]}{2d}(x+iy) + i[\tilde v_+]+\tilde{U}_0(x,y)+i \tilde V_0(x,y),\quad
(x,y) \in \mcr_{2d}\,,
\end{equation}
for some holomorphic function $\tilde{U}_0(x,y)+i\tilde V_0(x,y)$ that is $2\pi$-periodic in the $x$-variable, and therefore bounded.

The previous considerations
made to justify (\ref{tv}) are therefore applicable to $\tilde{U}_0(x,y)+i\tilde V_0(x,y)$, and lead to the following representation of the boundary values
$\tilde u_+$ on $y=0$ of $\tilde{U}$ in terms of the boundary values $\tilde v_{\pm}$ of $\tilde {V}$ on $y=0$ and $y=-2d$:
\begin{eqnarray}\label{ghtg}
\tilde{u}_+(x) &=& \frac{[\tilde{v}_+] - [\tilde{v}_-]}{2d}x +\displaystyle\frac{1}{4d} \,PV\,\int_\bdr \Big\{ \tanh\big(\frac{\pi}{2d}\,s\big) + \coth\big(\frac{\pi}{4d}\,(x-s)\big)\Big\}\,(\tilde{v}_+(s)-[\tilde v_+])\,ds \nonumber \\[0.15cm]
&& -\,\displaystyle\frac{1}{4d} \int_\bdr \Big\{ \tanh\big(\frac{\pi}{2d}\,s\big) + \tanh\big(\frac{\pi}{4d}\,(x-s)\big)\Big\}\,(\tilde{v}_-(s)-[\tilde v_-]) \,ds.
\end{eqnarray}

Let us now assume that $\tilde{U}+i\tilde{V}$ is analytic in the strip $\mcr_d$ and has for some $\alpha \in (0,1)$
a $C^{1,\alpha}_{loc}$ extension to $\overline{\mcr_d}$, with $\tilde{V}=0$ on $y=-d$.
The reflection principle \cite{Burk} permits
the analytic continuation of $\tilde{U}+i\tilde{V}$ to $\mcr_{2d}$ by setting
$$(\tilde{U}+i\tilde{V})(x+iy)=\overline{(\tilde{U}+i\tilde{V})(x-2id-iy)},\qquad x \in \bdr,\quad -2d <y <-d\,.$$
In this case $\tilde{v}_-(x)=-\,\tilde{v}_+(x)$, so that (\ref{ghtg}) takes the form
\begin{eqnarray*}
\tilde{u}_+(x) &=& \frac{[\tilde v_+]}{d}x+ \displaystyle\frac{1}{4d} \,PV\,\int_\bdr \Big\{ 2\,\tanh\big(\frac{\pi}{2d}\,s\big)
+ \tanh\big(\frac{\pi}{4d}\,(x-s)\big) +
\coth\big(\frac{\pi}{4d}\,(x-s)\big)\Big\}\,(\tilde{v}_+(s)-[\tilde{v}_+])\,ds \\[0.1cm]
&=& \frac{[\tilde v_+]}{d}x+ \displaystyle\frac{1}{2d} \,PV\,\int_\bdr \Big\{ \tanh\big(\frac{\pi}{2d}\,s\big) +
\coth\big(\frac{\pi}{2d}\,(x-s)\big)\Big\}\,(\tilde{v}_+(s)-[\tilde{v}_+])\,ds\,.
\end{eqnarray*}
Comparing this with (\ref{H}) we obtain, for $w \in C_{2\pi,\circ}^{1,\alpha}(\bdr)$, the explicit representation
\eqref {HE0} for $\mcc_d(w)$.

Now since $w \in C_{2\pi,\circ}^{1,\alpha}(\bdr)$ is $2\pi$-periodic, we can expand (\ref{HE0}) as
\begin{equation}\label{HE1}
\big(\mcc_d(w)\big)(x)=  \frac{1}{2d} \,PV\,\int_{-\pi}^\pi \sum_{k \in {\mathbb Z}}
\Big\{ \tanh\Big(\frac{\pi (s+2\pi k)}{2d}\Big) +  \coth\Big(\frac{\pi(x-s-2\pi k)}{2d}\Big)\Big\}\,w(s)\,ds\,.
\end{equation}

Define the functions $g$ and $\beta$ by
\begin{eqnarray*}
g(s) &=& \displaystyle\sum_{k \in {\mathbb Z}} \Big\{ \tanh\Big(\frac{\pi (s+2\pi k)}{2d}\Big) -\text{sgn}(k)\Big\},\qquad s \in \bdr ,\\[0.1cm]
\beta(s) &=& \displaystyle\sum_{k \in {\mathbb Z}} \Big\{ \coth\Big(\frac{\pi (s-2\pi k)}{2d}\Big) +\text{sgn}(k)\Big\},\qquad s \in \bdr
\setminus 2\pi{\mathbb Z},
\end{eqnarray*}
where $\text{sgn}(k)=1$ for $k > 0$, $\text{sgn}(k)=-1$ for $k<0$ and $\text{sgn}(0)=0$.
The estimates
\begin{eqnarray*}
&& \Big| \displaystyle\tanh\Big(\frac{\pi (s+2\pi k)}{2d}\Big) -\text{sgn}(k)\Big|
=\frac{2}{1+e^{\text{sgn}(k)\,\frac{\pi}{d}\,(s+2\pi k)}}
\le \frac{2}{1+e^{\frac{\pi}{d}\,(2\pi|k|-|s|)}},\qquad k \in {\mathbb Z},\quad s \in \bdr,\\[0.1cm]
&& \Big| \displaystyle\coth\Big(\frac{\pi (s-2\pi k)}{2d}\Big)   +  \text{sgn}(k)\Big|
=\frac{2}{e^{\frac{\pi}{d}\,|2\pi k+s|}-1}
\le \frac{2}{e^{\frac{\pi}{d}\,(2\pi|k|-|s|)}-1},\quad k \in {\mathbb Z},\qquad s \in \bdr \setminus 2\pi{\mathbb Z} ,
\end{eqnarray*}
ensure the continuity of $g$ on $\bdr$ and of $\beta$ on $\bdr \setminus 2\pi{\mathbb Z}$.
						The nature of the singularities
of $\beta$ at points of $2\pi{\mathbb Z}$ is plain. For $s \in \bdr \setminus 2\pi{\mathbb Z}$ we have
$$2=\sum_{k \in {\mathbb Z}} \Big\{ \text{sgn}(k+1) - \text{sgn}(k)\Big\}=g(s+2\pi)-g(s)=- \,\Big(\beta(s+2\pi)-\beta(s)\Big)\,.$$
						Finally defining
 $\beta_d(s)= \beta(s) \,+\,{s}/{\pi}$ and $g_d(s)= g(s) \,-\,{s}/{\pi}$,
both of which are $2\pi$-periodic, we obtain
\begin{equation}\label{HE2}
\sum_{k \in {\mathbb Z}} \Big\{ \tanh\Big(\frac{\pi (s+2\pi k)}{2d}\Big) +
\coth\Big(\frac{\pi(x-s-2\pi k)}{2d}\Big)\Big\}=\frac{2s-x}{\pi} + g_d(s)+\beta_d(x-s)\,.
\end{equation}
Substitution of (\ref{HE2}) into  (\ref{HE1}) yields \eqref{HE3}.
\end{proof}

\section{Some variational considerations}

We present two new variational formulations of
 travelling periodic gravity water waves with constant non-zero vorticity over a flat bed.
 We make no assumptions on the shape of the wave profile, thus allowing profiles that are not graphs.

The much-studied irrotational flows (that is, flows without vorticity)
are models for swell entering a region of still water, in which case there is no current, or else models for
swell entering a region with currents that are uniform with depth.
The earliest variational formulation dates back to Friedrichs \cite{F}.
It was followed by the work of Luke \cite{L}, Zakharov \cite{Z}, Babenko \cite{Ba} and others,
all of which expressed the Lagrangian in terms of a velocity potential.  See \cite{CSS} for further references.

While there is no velocity potential in the presence of vorticity, there is a stream function.
The first formulation,  valid for general vorticity distributions, recasts the water waves as extremals
of a suitably defined Lagrangian functional that is roughly the total energy of the wave.
Lagrangians of a similar type, expressed in terms of a hodograph transform
involving the stream function, were considered in \cite{CSS} under the assumption that
there are no stagnation points in the flow and that the free surface is the graph of a function (no overturning).
A related approach was pursued in the paper \cite{BurT},  which considers a water wave beneath
an elastic sheet obtained by minimization in  a class of rearrangements.
The novelty of our formulation is that it allows rotational waves, overturning free surface profiles,
stagnation points and critical layers in the flow.
Another advantage is that the Lagrangian is expressed directly in terms of the physical
variables instead of depending on a specific choice of coordinates.
A key aspect of our formulation is that the Lagrangian involves a variable domain of integration.
Although we are not aware of an earlier deduction of this formulation in this generality, we refer to it
as ``the standard variational formulation" due to its rather classical form.

The second variational formulation is specific to waves with constant vorticity and it
turns out to be more useful than the first formulation because it involves just a single function of a single variable.
Its essential advantage is that it reduces the governing equations to one pseudo-differential equation
for a function of one variable, namely, the elevation of the free surface when the fluid domain is
regarded as the conformal image
of a strip. Somewhat unexpectedly, this equation for the elevation is coupled to a scalar constraint.
The corresponding Lagrangian is essentially obtained by composing the first Lagrangian, suitably restricted,
with a conformal mapping from a strip. This formulation is new even for irrotational flows of finite depth.
As mentioned in the introduction, it could be regarded
as an analogue of Babenko's formulation \cite{Ba} for the irrotational water-wave problem of infinite depth,
which has turned out to be instrumental in the recent theory of global and subharmonic bifurcation
in the irrotational  infinite-depth case \cite{BDT1, BDT2, BST, ST}.
Our formulation opens up the possibility of similar investigations for waves of finite depth with constant non-zero vorticity,
in which case numerical studies \cite{DP, V} indicate that a much richer picture is expected to emerge.

\subsection{The standard variational formulation}

In this section we construct a functional on a certain function space, critical points of which are solutions
of the travelling water-wave problem (\ref{fs})--(\ref{g}). Since we are dealing with a free-boundary problem,
in which the domain $\Omega$ is unknown, the function space, to be denoted by $\mca$, will consists of
pairs $(\Omega,\psi)$, where $\psi$ is a function on $\Omega$. Some of the conditions expressed by
(\ref{fs})--(\ref{g}) will be required to hold for every element of $\mca$, while the remaining ones will emerge from the condition of criticality.

A domain $\Omega$ contained in the upper half of the $(X,Y)$ plane is called an
\emph{$L$-periodic strip-like domain} if its boundary consists of the real axis $\mcb$ and a curve
$\mcs$ described in parametric form by (\ref{fs1}) such that (\ref{fs2}) holds. We consider the space
${\mathcal A}$ of pairs $(\Omega,\psi)$, where
$\Omega$ is an $L$-periodic strip-like domain of class $C^{2,\alpha}$, and
$\psi \in C^{2,\alpha}_L({\R^2};\,\R)$ satisfying (\ref{g3})--(\ref{g4}).  The subscript
``$L$" is used to indicate periodicity in the $X$-variable, with period $L>0$, while $\alpha\in (0,1)$ is a H\"older exponent.
The behaviour of $\psi$ outside a neighbourhood of $\overline{\Omega}$ is of no importance.
The only restriction we impose on the geometry of
the free surface ${\mathcal S}$ is the requirement that the curve is not self-intersecting.  No restrictions
are imposed on the pattern of the streamlines so that we can handle overhanging profiles and critical layers.

For any pair $(\Omega,\psi)$ in $\mca$, the periodicity in the $X$-variable
permits us to restrict ourselves to a cell $\Omega^\dagger$ bounded below by the real axis ${\mathcal B}$,
above by the free surface $\mcs$ and laterally by two vertical lines situated at horizontal distance $L$. For
definiteness, and to ensure that $\Omega^\dagger$ is a connected set, one may choose one of the vertical
lines in the definition of  $\Omega^\dagger$ (and hence the other one too) so as to pass through the
lowest point of $\mcs$.
Consider on the space $\mca$ the functional
 \begin{equation}\label{L}
{\mathcal L}(\Omega,\psi)=\iint_{\Omega^\dagger} \Big\{ |\nabla\psi|^2\,+\,2\Upsilon\psi\,-\,2gY\,+\,Q\,\Big\}\,d\X\,,
\end{equation}
where $\X=(X,Y)$.

\begin{theorem}\label{t1}
Any critical point $(\Omega,\psi)$ of the functional ${\mathcal L}$ over the space $\mca$
 is a solution to the governing equations (\ref{fs})--(\ref{g}).
\end{theorem}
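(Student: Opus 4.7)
I will derive the interior equation \eqref{g1} from inner variations (fixing $\Omega$) and the Bernoulli condition \eqref{g2} from outer variations (moving $\Omega$). The other conditions \eqref{g3}--\eqref{g4} hold by definition of $\mca$, and the cell $\Omega^\dagger$ may be replaced by any period cell since all the boundary contributions I will encounter on the two lateral sides cancel pairwise by $L$-periodicity; I will suppress this verification throughout.

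\textbf{Step 1: inner variations yield $\Delta\psi=\Upsilon$ in $\Omega$.}
Fix $\Omega$ and take a perturbation $\psi+\varepsilon\varphi$ with $\varphi\in C^{2,\alpha}_L(\R^2;\R)$ vanishing on $\mcs\cup\mcb$, so that $(\Omega,\psi+\varepsilon\varphi)\in\mca$ for all $\varepsilon$. Differentiating \eqref{L} in $\varepsilon$ at $\varepsilon=0$ and integrating by parts on $\Omega^\dagger$ gives
\[
\frac{d}{d\varepsilon}\bigg|_{\varepsilon=0}\mcl(\Omega,\psi+\varepsilon\varphi)
=2\iint_{\Omega^\dagger}\bigl(-\Delta\psi+\Upsilon\bigr)\varphi\,d\X,
\]
the boundary terms vanishing because $\varphi=0$ on $\mcs\cup\mcb$ and the lateral terms cancelling by periodicity. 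Varying $\varphi$ freely gives $\Delta\psi=\Upsilon$ in $\Omega$, which is \eqref{g1}.

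\textbf{Step 2: outer variations yield the Bernoulli condition on $\mcs$.}
Let $\mathbf V:\R^2\to\R^2$ be a smooth, $L$-periodic vector field that vanishes in a neighbourhood of $\mcb$, and let $\phi_t$ be its flow. Then $\Omega_t:=\phi_t(\Omega)$ remains $L$-periodic and of class $C^{2,\alpha}$ for small $|t|$, the bed $\mcb$ is pointwise fixed, and setting $\psi_t:=\psi\circ\phi_t^{-1}$ preserves the conditions $\psi_t=0$ on $\mcs_t=\phi_t(\mcs)$ and $\psi_t=-m$ on $\mcb$. Thus $(\Omega_t,\psi_t)\in\mca$ and it is an admissible one-parameter family.

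By Reynolds' transport theorem applied to \eqref{L},
\[
\frac{d}{dt}\bigg|_{t=0}\mcl(\Omega_t,\psi_t)
=\iint_{\Omega^\dagger}\bigl\{2\nabla\psi\cdot\nabla\dot\psi+2\Upsilon\dot\psi\bigr\}\,d\X
+\int_{\mcs^\dagger}\bigl\{|\nabla\psi|^2+2\Upsilon\psi-2gY+Q\bigr\}(\mathbf V\cdot\mathbf n)\,dS,
\]
where $\dot\psi=\partial_t\psi_t|_{t=0}$ and where the $\mcb$-contribution to the boundary term vanishes since $\mathbf V=0$ there. Integrating the $\nabla\psi\cdot\nabla\dot\psi$ term by parts and using Step 1 annihilates the interior integral, leaving
\[
\frac{d}{dt}\bigg|_{t=0}\mcl(\Omega_t,\psi_t)
=2\int_{\mcs^\dagger}\dot\psi\,\frac{\partial\psi}{\partial n}\,dS
+\int_{\mcs^\dagger}\bigl\{|\nabla\psi|^2-2gY+Q\bigr\}(\mathbf V\cdot\mathbf n)\,dS,
\]
where I have used $\psi=0$ on $\mcs$ in the second integrand.

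\textbf{Step 3: computing $\dot\psi$ on $\mcs$ and concluding.}
Differentiating the identity $\psi_t\circ\phi_t=\psi$ at $t=0$ gives $\dot\psi=-\nabla\psi\cdot\mathbf V$ pointwise. On $\mcs$, where $\psi=0$, the gradient $\nabla\psi$ is normal to the curve, so $\nabla\psi=(\partial\psi/\partial n)\mathbf n$ and hence $\dot\psi|_{\mcs}=-(\partial\psi/\partial n)(\mathbf V\cdot\mathbf n)$. Substituting,
\[
\frac{d}{dt}\bigg|_{t=0}\mcl(\Omega_t,\psi_t)
=\int_{\mcs^\dagger}\bigl\{-|\nabla\psi|^2-2gY+Q\bigr\}(\mathbf V\cdot\mathbf n)\,dS.
\]
Since $(\Omega,\psi)$ is critical and $\mathbf V\cdot\mathbf n$ can be prescribed arbitrarily on $\mcs$ (modulo the $L$-periodicity), the fundamental lemma gives $|\nabla\psi|^2+2gY=Q$ on $\mcs$, which is \eqref{g2}.

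\textbf{Main obstacle.} The technical heart of the argument is the coupling of the domain variation and the function variation that is needed to keep $\psi_t=0$ on the moving surface $\mcs_t$: it is this coupling that converts the Reynolds boundary term, together with the integration-by-parts boundary term, into the single scalar constraint on $\mcs$. Making sure that the admissible family lies in $\mca$ (class $C^{2,\alpha}$, $L$-periodicity, and matching boundary data on $\mcb$) dictates the restrictions on $\mathbf V$, but once $\mathbf V$ is taken as above, the computation collapses cleanly via the identity $\nabla\psi\parallel\mathbf n$ on $\{\psi=0\}$.
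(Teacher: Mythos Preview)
Your proof is correct and follows the same underlying idea as the paper: derive $\Delta\psi=\Upsilon$ from variations of $\psi$ with $\Omega$ fixed, and derive the Bernoulli condition from domain variations in which $\psi$ is transported by the flow so as to preserve the boundary constraints. The paper carries out the second part by invoking the general formalism of inner variations from Giaquinta--Hildebrandt, obtaining the two-component boundary system \eqref{bcv} and then reducing it to \eqref{g2} via a case analysis according to whether $\nabla\psi$ vanishes on $\mcs$. Your direct computation via Reynolds' transport theorem, combined with the observation that on $\{\psi=0\}$ the gradient is purely normal so that $\dot\psi|_{\mcs}=-(\partial_n\psi)(\mathbf V\cdot\mathbf n)$ and $|\nabla\psi|^2=(\partial_n\psi)^2$, collapses the surface terms to the single integrand $\{-|\nabla\psi|^2-2gY+Q\}(\mathbf V\cdot\mathbf n)$ without any case distinction. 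This is slightly more self-contained; the paper's approach has the advantage of fitting into a standard reference and making the Noether equation $(\Delta\psi-\Upsilon)\nabla\psi=0$ visible as an intermediate step. One terminological remark: what you call ``inner variations'' in Step~1 are what the paper calls variations of the dependent variable (weak extremals); the paper reserves ``inner variations'' for what you do in Step~2.
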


\begin{proof}
We only need to show that a critical point $(\Omega,\psi)$  of ${\mathcal L}$ on $\mca$  satisfies (\ref{g1}) and (\ref{g2}).
We first investigate, for a fixed domain $\Omega$ the rate of change of ${\mathcal L}$ with respect to variations of
the {\it dependent} variable $\psi$ which do
not change the boundary values on ${\mathcal S}$ and ${\mathcal B}$. For smooth functions $\varphi: \Omega \to \R$ that are $L$-periodic
in the horizontal variable and vanish in a neighbourhood of ${\mathcal S}$ and of ${\mathcal B}$, the first variation $\delta{\mathcal L}(\Omega,\psi,\varphi)$
of ${\mathcal L}$ at $(\Omega,\psi)$ in direction of $\varphi$ is defined by
$$\delta{\mathcal L}(\Omega,\psi,\varphi):=\frac{d}{d\e}\,{\mathcal L}(\Omega,\psi+\e \varphi)\Big|_{\e=0}$$
cf. Section 2.1 in \cite{GH}. A {\it weak extremal} $\psi$ of ${\mathcal L}$ is a  solution of the Euler-Lagrange
equation $\delta{\mathcal L}(\Omega,\psi,\varphi)=0$
for all $\varphi$ of the type described above, and a straight-forward computation shows that the weak extremals
$\psi \in\,C^{2,\alpha}_L(\Omega;\,\R)$
are precisely the solutions to the {\it Euler-Lagrange equation}
\begin{equation}\label{E}
\Delta\psi=\Upsilon\quad\hbox{in}\quad \Omega\,.
\end{equation}

We now consider variations $\X \mapsto {\frak D}_\e(\X)$ of the {\it independent} variables $\X$, allowing modifications
of the domain $\Omega$, but such that the modified domains are still $L$-periodic strip-like domains. This leads
to the notion of {\it strong inner extremals} that will not only satisfy an equation  of Euler-Lagrange-type but also
a free boundary condition, cf.\ Section 3.2 of \cite{GH}. Given a vector field ${\frak f}$ of class $C^{2,\alpha}_L(\R^2;\,\R^2)$,
the support of which is contained in the upper half-plane $\{ \X:\ Y > 0\}$, consider the parameter-dependent family of mappings
${\frak D}_\e: R^2 \to \R^2$ defined by
$${\frak D}_\e({\mathbb X}):=\X\,+\,\e\,{\frak f}(\X).$$
Let $\Omega_\e={\frak D}_\e(\Omega)$. For $|\e| <\e_0$ with $\e_0>0$ sufficiently small, ${\frak D}_\e$ is a diffeomorphism
from $\overline{\Omega}$ onto $\overline{\Omega_\e}$. For a given $\psi \in C^{2,\alpha}_L(\overline{\Omega};\,\R)$ we can now define a
{\it strong inner variation in the direction of ${\frak f}$} by
$$\psi_\e(\X)=\psi({\frak D}_\e^{-1}(\X))\quad\hbox{for}\quad \X \in \overline{\Omega_\e}\quad\hbox{and}\quad \e \in (-\e_0,\e_0)\,.$$
Note that, for any $\e \in (-\e_0,\e_0)$, the pair $(\Omega_\e, \psi_\e)$ still belongs to $\mca$.
Correspondingly, the expression
$$\partial{\mathcal L}(\Omega,\psi,{\frak f}):=\frac{d}{d\e}\,\iint_{(\Omega_\e)^\dagger} F\big(\X,\,\psi_\e(\X),\,\nabla\psi_\e(\X)\big)\,d\X\Big|_{\e=0}$$
where
\begin{equation}\label{F}
F(X,\,Y,\,z,\,p_1,\,p_2)=p_1^2\,+\,p_2^2\,+\,2\Upsilon z\,-\,2gY\,+Q,
\end{equation}
is the integrand in the functional ${\mathcal L}$, is called the {\it inner variation} of the
functional ${\mathcal L}$ at $\psi$ in the direction of the vector field ${\frak f}$.
A  mapping $\psi \in C^{2,\alpha}_L({\overline \Omega}, \R)$ is said
to be a {\it strong inner extremal} of ${\mathcal L}$ if $\partial{\mathcal L}(\Omega,\psi,{\frak f})=0$ holds for all
such vector fields ${\frak f}$. Also, a mapping $\psi \in C^{2,\alpha}_L({\overline \Omega}, \R)$ is said
to be an {\it inner extremal} of ${\mathcal L}$ if $\partial{\mathcal F}(\Omega,\psi,{\frak f})=0$ holds for the subclass
of vector fields ${\frak f}$ that vanish in a neighbourhood of ${\mathcal S}$ and of ${\mathcal B}$. Any inner
extremal $\psi \in C^{2,\alpha}_L(\overline{\Omega};\,\R)$ will satisfy the {\it Noether equation}
\begin{equation}\label{el2}
(\Delta\psi -\Upsilon)\,\nabla\psi=0\quad\hbox{in}\quad \Omega\,,
\end{equation}
cf. Section 3.1 and Proposition 1, Section 3.2 in \cite{GH}.
Note the equivalence of (\ref{E}) and (\ref{el2}) if stagnation points do not occur. However, in
our setting stagnation points are permissible.

Moreover, a strong inner extremal satisfies (\ref{el2}) as well as the boundary conditions
\begin{equation}\label{bcv}
\left\{\begin{array}{l}
\nu_1\,\Big( p_1\,\displaystyle\frac{\partial F}{\partial p_1}\,-\,F\Big) \,+\,\nu_2\,p_1\,\displaystyle\frac{\partial F}{\partial p_2}=0,\\[0.3cm]
\nu_1\, p_2\,\displaystyle\frac{\partial F}{\partial p_1} \,+\,\nu_2\,\Big(p_2\,\displaystyle\frac{\partial F}{\partial p_2}\,-\,F\Big)=0,
\end{array}\right.\quad\hbox{on}\quad {\mathcal S}\,,
\end{equation}
where $\nu=(\nu_1,\nu_2)$ is the outer unit normal to ${\mathcal S}$ and $F$ is defined in (\ref{F}). The condition $\psi=0$ on ${\mathcal S}$,
which must hold since $(\Omega,\,\psi) \in {\mathcal A}$, ensures that $\nu$ is collinear with $\nabla_\X\psi$ at all points
$\X \in {\mathcal S}$ where $\nabla_\X\psi(\X) \neq (0,0)$. At every such point, a mere substitution into (\ref{bcv}) in combination with
(\ref{g3}) leads us to
(\ref{g2}). On the other hand, at points $\X \in {\mathcal S}$ where $\nabla_\X\psi(\X) = (0,0)$, inspection of (\ref{bcv}) yields $F=0$, which
in this case is the same as (\ref{g2}). This completes the proof of Theorem \ref{t1}. \end{proof}

\begin{remark}
{\rm While Theorem \ref{t1} is specifically formulated
for flows with constant vorticity, there is a counterpart for travelling waves on flows with
{\it general vorticity distributions}, in which case equation (\ref{g1}) is replaced by
\[\Delta\psi=\Upsilon(\psi),\]
where now $\Upsilon$ is an arbitrary function of one variable. In this case, we would work on the same space
$\mca$, but with the functional $\mcl$ in (\ref{L}) replaced by
 \[
{\mathcal L}(\Omega,\psi)=\iint_{\Omega^\dagger} \Big\{ |\nabla\psi|^2\,+\,2\,\Xi(\psi)\,-\,2gY\,+\,Q\,\Big\}\,d\X\,,
\]
and, correspondingly, the integrand $F$ in (\ref{F}) being replaced by \[
F(X,\,Y,\,z,\,p_1,\,p_2)=p_1^2\,+\,p_2^2\,+\,2\,\Xi(z)\,-\,2gY\,+Q\,,
\]
where $\Xi(z)=\displaystyle\int_0^z\Upsilon(s)\,ds$. Even discontinuous vorticity functions $\Upsilon$ are permissible if one lowers the regularity requirements and uses a suitable weak formulation (see \cite{CS2, VZ}).
The setting of weak solutions is well-suited for the use of geometric methods
to investigate the behaviour of the surface wave profile near stagnation points cf. \cite{VW1, VW2}. Note that in \cite{CSS},
an equivalent form of the functional ${\mathcal L}$, expressed in terms of a hodograph transform
that involves the stream function, was considered under the restrictive assumptions that
there are no stagnation points in the flow and that the free surface is the graph of a function. The present setting accommodates
overhanging free surface profiles as well as stagnation points in the flow.
}\end{remark}

\subsection{An alternative convenient variational formulation} Since one of the great advantages of Lagrangian dynamics is
the freedom it allows in the choice of coordinates and
since variational formulations involving as few dependent and independent variables as possible are preferable, we will express
the functional ${\mathcal L}$ in  terms of the function $v$, introduced in Section 2.1.
In terms of $v$, the governing equations (\ref{fs})--(\ref{g}) were reformulated in Section 2.2
as a single pseudodifferential equation (\ref{vara}) coupled with the scalar constraint (\ref{meana}).
We  will show that the functional $\Lambda$ introduced in the variational formulation of the
equations (\ref{vara})-(\ref{meana}) of Section 2.2, which were discussed in Section 2.3, corresponds to
an energy-type functional in the
physical variables.

In the notation of the previous subsection, with $\Omega$ an arbitrary $2\pi/k$-periodic strip-like domain,
let us express ${\mathcal L}(\Omega,\psi^\Omega)$ in terms of the corresponding function $v$, where
$(\Omega,\,\psi^\Omega) \in \mca$ and $\psi^\Omega$ is the unique solution of (2.2a)-(2.2c).
We would like to emphasize that, throughout the rest of this section, we always deal with
pairs $(\Omega,\psi^\Omega)$, rather than with arbitrary elements $(\Omega,\psi)$ of $\mca$.
However, for notational convenience we often use the notation $\psi$ instead of $\psi^\Omega$.
In what follows, we will prove the formula

\begin{align}
{\mathcal L}(\Omega,\psi^\Omega)&=\displaystyle\int_{-\pi}^\pi\Big(Q\,v\,-\,g\,v^2\,-\,\frac{\Upsilon^2}{3}\,v^3\Big)\,\Big(\frac{1}{k}\,+\,\mcc_{kh}(v')\Big)\,dx \nonumber\\&\qquad
+\,\displaystyle\int_{-\pi}^\pi\Big(m\,-\,\frac{\Upsilon}{2}\,v^2\Big)\,\Big(\frac{m}{kh}\,-\,\frac{\Upsilon}{2kh}\,[v^2]\,
-\,\Upsilon\,\mcc_{kh}(vv')\Big)\,dx\,,       \label{iale}
\end{align}
which is the same as $\Lambda(w,h)$ in \eqref{ev}.
In order to do so,
it is convenient to choose $\Omega^\dagger$ so that one of its lateral boundaries passes through a
lowest point (trough) of $\mcs$, and we denote by $\mcs^\dagger$ and $\mcb^\dagger$
the top and bottom boundaries of $\Omega^\dagger$. It is also convenient to choose the
conformal mapping $U+iV$ so that $\mcs^\dagger$ is the image of the horizontal
line segment $\{(x,0):x\in[-\pi,\pi]$. Then $\mcb^\dagger$ will be the conformal image of a
segment $\{(x,-kh):x\in[\beta-\pi,\beta+\pi]\}$, for some $\beta\in\mathbb{R}$. Firstly,
denoting by $\nu$ the outward unit normal at the boundary
$\partial\Omega$ of $\Omega$, Green's formula yields
$$\iint_{\Omega^\dagger} |\nabla\psi|^2\,d\X = -\,\Upsilon\iint_{\Omega^\dagger} \psi\,d\X \,-\,
m\int_{\mathcal B^\dagger} \frac{\partial\psi}{\partial \nu}\,d\sigma$$
by periodicity in the $X$-variable and by taking (\ref{g1}), (\ref{g3}) and (\ref{g4}) into account.
The outer normal on ${\mathcal B}$
is $\nu=(0,-1)$, so that
$$
\frac{\partial\psi}{\partial \nu}(X,0)=-\,\psi_Y(X,0)\quad\hbox{on}\quad {\mathcal B}.$$
Recall that the conformal mapping is $X=U(x,y),\ Y=V(x,y)$.
Since $\psi_X(X,0)=0$ by (\ref{g4}), from (\ref{x}) we infer that
$$\xi_y(x,-kh)=\psi_Y(X,0)\,U_x(x,-kh)$$
as $V_y=U_x$ by the Cauchy-Riemann equations. Therefore, using the periodicity of $\xi$,
$$\int_{\mathcal B^\dagger} \frac{\partial\psi}{\partial \nu}\,d\sigma=-\int_{\beta-\pi}^{\beta+\pi} \xi_y(x,-kh)\,dx
=-\int_{-\pi}^\pi \xi_y(x,-kh)\,dx,$$
so that
\begin{equation}\label{e1}
\iint_{\Omega^\dagger} |\nabla\psi|^2\,d\X = -\,\Upsilon\iint_{\Omega^\dagger} \psi\,d\X +
m\int_{-\pi}^\pi \xi_y(x,-kh)\,dx\,.
\end{equation}
Similarly,
$$\begin{array}{l}
\displaystyle\iint_{\Omega^\dagger} \psi\,d\X =
\displaystyle\iint_{\Omega^\dagger}  \Big(\Delta\,\frac{Y^2}{2}\Big)\ \psi \,d\X  = \displaystyle\iint_{\Omega^\dagger}\frac{Y^2}{2}\,\Delta\psi\,d\X +
\displaystyle\int_{\partial\Omega^\dagger} \psi \,\frac{\partial}{\partial \nu}\,\Big(\frac{Y^2}{2}\Big)\,d\sigma
-\,\displaystyle\int_{\partial\Omega^\dagger} \frac{Y^2}{2}\,\frac{\partial \psi}{\partial \nu}\,d\sigma \\[0.35cm]
\qquad = \displaystyle\frac{\Upsilon}{2}\,\iint_{\Omega^\dagger} Y^2\,d\X\,-\,\int_{\mathcal S^\dagger}\frac{Y^2}{2}\,\frac{\partial \psi}{\partial \nu}\,d\sigma.
\end{array}$$
But $\nu=\displaystyle\frac{(-\,V_x,U_x)}{\sqrt{U_x^2\,+\,V_x^2}}$ is the outer normal on ${\mathcal S}$,
so that on ${\mathcal S}$ we have
$$\displaystyle\frac{\partial\psi}{\partial \nu}=(\psi_X,\psi_Y)\cdot \nu=\frac{\psi_XU_y+\psi_YV_y}{\sqrt{U_x^2+V_x^2}}\Big|_{(x,0)}
=\frac{\xi_y}{\sqrt{U_x^2+V_x^2}}\Big|_{(x,0)}$$
in view of (\ref{x}) and the Cauchy-Riemann equations $U_x=V_y$, $U_y=-\,V_x$. Hence
$$\int_{\mathcal S^\dagger}\frac{Y^2}{2}\,\frac{\partial \psi}{\partial \nu}\,d\sigma=\int_{-\pi}^\pi \frac{\xi_y}{\sqrt{U_x^2+V_x^2}}\,
\frac{V^2}{2}\,{\sqrt{U_x^2+V_x^2}}\Big|_{(x,0)}\,dx=\int_{-\pi}^\pi \frac{V^2}{2}\,\xi_y\Big|_{(x,0)}\,dx$$
as $d\sigma=\sqrt{U_x^2+V_x^2}\Big|_{(x,0)}\,dx$. Moreover, using the Cauchy-Riemann equations for $U+iV$, the divergence theorem, the
periodicity in the $X$-variable and the fact that $V(x,-kh)=0$ from (\ref{v}), we get
$$\iint_{\Omega^\dagger} Y^2\,d\X = \iint_{\Omega^\dagger} \nabla_\X \cdot \Big(0,\,\frac{Y^3}{3}\Big)\,d\X =
\displaystyle\frac{1}{3}\,\int_{-\pi}^\pi V^3(x,0)\,V_y(x,0)\,dx=\frac{1}{3}\,\int_{-\pi}^\pi V^3(x,0)\,U_x(x,0)\,dx.$$
 Combining the last four displayed equations, we obtain
\begin{equation}\label{e2}
\displaystyle\iint_{\Omega^\dagger} \psi\,d\X =\,\displaystyle\frac{\Upsilon}{6}\,\int_{-\pi}^\pi V^3(x,0)\,U_x(x,0)\,dx
\,-\,\displaystyle\frac{1}{2}\,\int_{-\pi}^\pi V^2(x,0)\,\xi_y(x,0)\,dx\,.
\end{equation}
Proceeding as above, we also get
$$\displaystyle\iint_{\Omega^\dagger} 1\,d\X =  \displaystyle\int_{-\pi}^\pi V(x,0)\,U_x(x,0)\,dx\,,\qquad
\displaystyle\iint_{\Omega^\dagger} Y\,d\X = \displaystyle\frac{1}{2}\,\int_{-\pi}^\pi V^2(x,0)\,U_x(x,0)\,dx\,.$$
Substituting the last three equations and (\ref{e1}) into (\ref{L}) yields
\begin{eqnarray*}
{\mathcal L}(\Omega,\psi^\Omega)
&=& \displaystyle\,m\,\int_{-\pi}^\pi \xi_y(x,-kh)\,dx\,
-\frac{\Upsilon}{2}\,\int_{-\pi}^\pi V^2(x,0)\,\xi_y(x,0)\,dx\,\\[0.35cm]
&& \,+\,\int_{-\pi}^\pi   \left\{    \displaystyle\frac{\Upsilon^2}{6}\,  V^3(x,0)
-\,\displaystyle\,g\, V^2(x,0)\,\, \,+\,\displaystyle\,Q\, V(x,0) \right\}
\,U_x(x,0)\,dx\,.
\end{eqnarray*}
Using (\ref{z}) to express
$$\xi_y=\Upsilon\,VV_y+\zeta_y=\Upsilon VU_x +\zeta_y$$
and using $V=0$ on $y=-hk$, we get
\begin{eqnarray*}
{\mathcal L}(\Omega,\psi^\Omega)
&=& \displaystyle\,m\,\int_{-\pi}^\pi \zeta_y(x,-kh)\,dx\,-
\displaystyle\frac{\Upsilon}{2}\,\int_{-\pi}^\pi V^2(x,0)\,\zeta_y(x,0)\,dx\,\\[0.35cm]
&& \,+\,\int_{-\pi}^\pi   \left\{   - \displaystyle\frac{\Upsilon^2}{3}\,  V^3(x,0)
-\,\displaystyle\,g\, V^2(x,0)\,\, \,+\,\displaystyle\,Q\, V(x,0) \right\}
\,U_x(x,0)\,dx\,.
\end{eqnarray*}
However, the harmonicity (\ref{gc1}) of $\zeta$ in the strip $\mcr_{kh}$ yields
$$\int_{-\pi}^\pi \zeta_y(x,-kh)\,dx=\int_{-\pi}^\pi \zeta_y(x,0)\,dx,$$
so that
\begin{eqnarray*}
{\mathcal L}(\Omega,\psi^\Omega) &=& \displaystyle\int_{-\pi}^\pi\Big\{\,Q\,V(x,0)\,-\,g\,V^2(x,0)\,-\,\frac{\Upsilon^2}{3}\,V^3(x,0)\,
\Big\}\,U_x(x,0)\,dx\\[0.35cm]
&&\,+\,\displaystyle\int_{-\pi}^\pi\Big\{m\,-\,\frac{\Upsilon}{2}\,V^2(x,0)\Big\}\,\zeta_y(x,0)\,dx.
\end{eqnarray*}
Using the relations (\ref{vzc}) and the fact that $U_x(x,0)=V_y(x,0) = \frac{1}{k}+\mcc_{kh}(v')$,
we obtain (\ref{iale}), which is precisely the functional $\Lambda(w,h)$ introduced in Section 2.3.

\end{appendix}

\section*{Acknowledgements}

The research of the first author and partly that of the third author were supported by the ERC Advanced Grant ``Nonlinear studies of water
flows with vorticity", and that of the second author by NSF Grant DMS-1007960. The third author would also like to thank the Isaac Newton Institute for Mathematical Sciences, Cambridge, for support and hospitality during the programme ``Free Boundary Problems and Related Topics", where work on this paper was undertaken.



\begin{thebibliography}{10}

\bibitem{AT}
C.~J.~Amick and J.~F.~Toland, On periodic water-waves and their
convergence to solitary waves in the long-wave limit, {\it Phil.
Trans. Roy. Soc. London Ser. A} {\bf 303} (1981), 633--669.

\bibitem{Ba}
K.~I.~Babenko, Some remarks on the theory of surface waves of finite
amplitude, {\it Dokl. Akad. Nauk} {\bf 294} (1987), 1033--1037.



\bibitem{BDT1}
B.~Buffoni, E.~N.~Dancer and J.~F.~Toland, The regularity and local
bifurcation of steady periodic water waves, {\it Arch. Ration. Mech.
Anal.} {\bf 152} (2000), 207--240.

\bibitem{BDT2}
B.~Buffoni, E.~N.~Dancer and J.~F.~Toland, The sub-harmonic
bifurcation of Stokes waves, {\it Arch. Ration. Mech. Anal.} {\bf
152} (2000), 241--271.

\bibitem{BST}
B.~Buffoni, E.~S\'er\'e and J.~F.~Toland, Surface water waves as saddle points of the energy,
{\it Calc. Var. Part. Diff. Eq.}  {\bf 17} (2003), 199--220.

\bibitem{BT}
B.~Buffoni and J.~F.~Toland, {\it Analytic theory of global
bifurcation}, Princeton University Press, Princeton and Oxford,
2003.

\bibitem{Burk}
R.~B.~Burckel, {\it An introduction to classical complex analysis}, New York-London, 1979.

\bibitem{BurT}
G.~R.~Burton and J.~F.~Toland, Surface waves on steady perfect-fluid flows with vorticity, {\it Comm.
Pure Appl. Math.} {\bf 64} (2011), 975--1007.


\bibitem{C}
A.~Constantin, The trajectories of particles in Stokes waves, {\it
Invent. Math.} {\bf 166} (2006), 523--535.


\bibitem{Cb}
A. Constantin, {\it Nonlinear water waves with applications to
wave-current interactions and tsunamis}, CBMS-NSF Conf.
Ser. Appl. Math., Vol. 81, SIAM, Philadelphia, 2011.

\bibitem{CEW}
A.~Constantin, M.~Ehrnstr\"om and E.~Wahl\'en, Symmetry of steady
periodic gravity water waves with vorticity, {\it Duke Math. J.}
{\bf 140} (2007), 591--603.

\bibitem{CE0}
A.~Constantin and J.~Escher, Symmetry of steady periodic surface
water waves with vorticity, {\it J. Fluid Mech.} {\bf 498} (2004),
171--181.

\bibitem{CE}
A.~Constantin and J.~Escher, Analyticity of periodic traveling free surface water waves with vorticity,
{\it Ann. of Math.} {\bf 173} (2011), 559--568.

\bibitem{CSS}
A.~Constantin, D.~Sattinger and W.~Strauss, Variational formulations for steady
water waves with vorticity, {\it J. Fluid Mech.} {\bf 548}  (2006), 151--163.

\bibitem{CS}
A.~Constantin and W.~Strauss, Exact steady periodic water waves with
vorticity, {\it Comm. Pure Appl. Math.} {\bf 57} (2004), 481--527.

\bibitem{CSp}
A.~Constantin and W.~Strauss, Pressure beneath a Stokes wave, {\it
Comm. Pure Appl. Math.} {\bf 53} (2010), 533--557.

\bibitem{CS2}
A.~Constantin and W.~Strauss, Periodic traveling gravity water waves with discontinuous vorticity,
{\it Arch. Ration. Mech. Anal.} {\bf 202} (2011), 133--175.


\bibitem{CV}
A.~Constantin and E.~Varvaruca, Steady periodic water waves with constant vorticity:
regularity and local bifurcation, {\it Arch. Ration. Mech. Anal.} {\bf 199} (2011), 33--67.

\bibitem{CR}
M.~G.~Crandall and P.~H.~Rabinowitz, Bifurcation from simple
eigenvalues, {\it J. Funct. Anal.} {\bf 8} (1971), 321--340.

\bibitem{D}
E.~N.~Dancer, Bifurcation theory for analytic operators, {\it Proc. London Math. Soc.}
{\bf 26} (1973), 359--384.

\bibitem{DP}
A.~F.~T.~da Silva and D.~H.~Peregrine, Steep, steady surface waves
on water of finite depth with constant vorticity, {\it J. Fluid
Mech.} {\bf 195} (1988), 281--302.

\bibitem{Dieck}
A.~Dieckmann, {\it Series of Hyperbolic Functions} (web-material).

\bibitem{DJ}
M.-L.~Dubreil-Jacotin, Sur la d\'etermination rigoureuse des ondes permanentes
p\'eriodiques d'ampleur finie, {\it J. Math. Pures Appl.} {\bf 13} (1934), 217--291.


\bibitem{DM}
H.~Dym and H.~P.~McKean, {\it Fourier series and integrals},
Academic Press, New York-London, 1972.


\bibitem{F}
K.~O.~Friedrichs, \"Uber ein Minimumproblem f\"ur Potentialstr\"omungen mit freiem Rande,
{\it Math. Ann.} {\bf 109} (1933), 60--82.

\bibitem{GH}
M.~Giaquinta and S.~Hildebrandt, {\it Calculus of Variations I}, Springer-Verlag, Berlin, 1996.

\bibitem{Jon}
I.~G.~Jonsson, Wave-current interactions, in {\it The Sea}, B. Le M\'ehaut\'e, D.M. Hanes (Eds.),
Ocean Eng. Sc., vol. 9(A), Wiley, 1990, pp. 65--120.


\bibitem{L}
J.~C.~Luke, A variational principle for a fluid with a free surface, {\it J. Fluid Mech.} {\bf 27} (1967), 395--397.





\bibitem{KS1}
J.~Ko and W.~Strauss, Effect of vorticity on steady water waves,
{\it J. Fluid Mech.} {\bf 608} (2008), 197--215.

\bibitem{KS2}
J.~Ko and W.~Strauss, Large-amplitude steady rotational water waves,
{\it Eur. J. Mech. B Fluids} {\bf 27} (2008), 96--109.






\bibitem{OS} H.~Okamoto and M.~Shoji, {\it The mathematical theory
of permanent progressive water waves}, World Scientific, New
Jersey-London-Singapore-Hong Kong, 2001.





\bibitem{Ra} P.~H.~Rabinowitz, Some global results for nonlinear eigenvalue problems, {\it J. Funct.
Anal.} {\bf 7} (1971), 487--513.


\bibitem{Sp}
E.~R.~Spielvogel, A variational principle for waves of infinite depth, {\it Arch. Ration. Mech. Anal.}
{\bf 39} (1970), 189--205.

\bibitem{Str}
W.~Strauss, Steady water waves, {\it Bull. Amer. Math. Soc.} {\bf 47} (2010), 671--694.

\bibitem{SS}
W.~Strauss, Rotational steady periodic water waves with stagnation points, {\it SIAM Conference
on Analysis of Partial Differential Equations}, San Diego, CA, Nov. 14-17, 2011.

\bibitem{ST} E.~Shargorodsky and J.~F.~Toland, Bernoulli free-boundary
problems, {\it Mem. Amer. Math. Soc.} {\bf 196} (2008), no.\ 914.

\bibitem{St}
E.~M.~Stein, {\it Harmonic analysis: real-variable methods,
orthogonality, and oscillatory integrals}, Princeton University
Press, Princeton, NJ, 1993.


\bibitem{Ke}
W.~Thomson (Lord Kelvin), On a disturbing infinity in Lord
Rayleigh's solution for waves in a plane vortex stratum, {\it
Nature} {\bf 23} (1880), 45--46.


\bibitem{T}
J.~F.~Toland, Stokes waves, {\it Topol. Methods Nonlinear Anal.}
{\bf 7} (1996), 1--48.

\bibitem{T1}
J.~F.~Toland, On a pseudo-differential equation for Stokes waves,
{\it Arch. Ration. Mech. Anal.} {\bf 162} (2002), 179--189.

\bibitem{To}
A.~Torchinsky, {\it Real-variable methods in harmonic analysis},
Dover Publications, Inc., Mineola, NY, 2004.


\bibitem{EV} E.~Varvaruca, Singularities of Bernoulli free
boundaries, {\it Comm. Partial Differential Equations} {\bf 31}
(2006), 1451--1477.

\bibitem{V0}
E.~Varvaruca, Some geometric and analytic properties of solutions of
Bernoulli free-boundary problems, {\it Interfaces Free Bound.} {\bf
9} (2007), 367--381.

\bibitem{V2}
E.~Varvaruca, Bernoulli free-boundary problems in strip-like domains
and a property of permanent waves on water of finite depth, {\it
Proc. Roy. Soc. Edinburgh A} {\bf 138} (2008), 1345--1362.

\bibitem{V1}
E.~Varvaruca, On the existence of extreme waves and the Stokes
conjecture with vorticity, {\it J. Differential Equations} {\bf 246}
(2009), 4043--4076.


\bibitem{VW1}
E.~Varvaruca and G.~S.~Weiss, A geometric approach to generalized Stokes conjectures,
{\it Acta Mathematica} {\bf 206} (2011), 363--340.

\bibitem{VW2}
E.~Varvaruca and G.~S.~Weiss, The Stokes conjecture for waves with vorticity,
{\it Ann. Inst. H. Poincar\'{e} Anal. Non Lin\'{e}aire}, {\bf 29} (2012), 861--885.

\bibitem{VZ}
E.~Varvaruca and A.~Zarnescu, Equivalence of weak formulations of the steady water wave equations,
{\it Phil. Trans. Royal Soc. London A}, {\bf 370} (2012), 1703--1719.

\bibitem{W}
E.~Wahl\'en, Steady water waves with a critical layer, {\it J.
Differential Equations} {\bf 246} (2009), 2468--2483.

\bibitem{V}
J.-M.~Vanden-Broeck, Some new gravity waves in water of finite depth, {\it Phys. Fluids} {\bf 26} (1983), 2385--2387.


\bibitem{Z}
V.~E.~Zakharov, Stability of periodic waves of finite amplitude on the surface of a deep fluid, {\it J. Appl.
Mech. Tech. Phys.} {\bf 9} (1968), 1990--1994.












\end{thebibliography}
\end{document}